\numberwithin{equation}{section}
\newtheorem{theorem}{Theorem}[section]
\newtheorem{lemma}[theorem]{Lemma}
\newtheorem{corollary}[theorem]{Corollary}
\newtheorem{proposition}[theorem]{Proposition}
\newtheorem{remark}{Remark}[section]
\declaretheoremstyle[headfont=\normalfont]{normalhead}
\title[Sharp Beckner's Inequality]{On Beckner's Inequality for Axially Symmetric Functions on $\mathbb{S}^6$}
\author{Changfeng Gui}
\address{Department of Mathematics, University of Macau, Taipan, Macau  and Department of Mathematics, University of Texas at San Antonio, Texas, USA}
\email{changfeng.gui@utsa.edu}
\author{Tuoxin Li}
\address{Department of Mathematics, University of British Columbia,Vancouver, Canada}
\email{tuoxin@math.ubc.ca}
\author{Juncheng Wei}
\address{Department of Mathematics, University of British Columbia,Vancouver, Canada}
\email{jcwei@math.ubc.ca}
\author{Zikai Ye}
\address{Department of Mathematics, University of British Columbia,Vancouver, Canada}
\email{yezikai@math.ubc.ca}
\begin{document}

\maketitle

\begin{abstract}
We prove that axially symmetric solutions to the $Q$-curvature type problem
$$ \alpha P_6 u + 120(1-\frac{e^{6u}}{\int_{\mathbb{S}^6} e^{6u}})=0 \ \ \ \ \ \mbox{on} \ \mathbb{S}^6 $$
must be constants, provided that $ \frac{1}{2}\le \alpha <1$. In view of the existence of non-constant solutions obtained by Gui-Hu-Xie \cite{GHW2022} for $\frac{1}{7}<\alpha<\frac{1}{2}$, this result is sharp. This result closes the gap of the related results in \cite{GHW2022}, which proved a similar uniqueness result for $\alpha \geq 0.6168$. The improvement is based on two  types of  new estimates: one is  a better estimate of the semi-norm $\lfloor G\rfloor^2$,  the other one is a family of refined estimates on Gegenbauer coefficients, such as pointwise decaying and cancellations properties.
\end{abstract}

\section{Introduction and Main Results}

Beckner's inequality on $\mathbb{S}^6$,  a higher order Moser-Trudinger inequality,  asserts that  the functional
\begin{equation*}
J_{\alpha}(u):=\frac{\alpha}{2}\int_{\mathbb{S}^6}u(P_{6}u)
\text{d}w+120\int_{\mathbb{S}^6}u
\text{d}w-20\ln\int_{\mathbb{S}^6}e^{6u}
\text{d}w
\end{equation*}
 is non-negative for $\alpha=1$ and all $u\in H^2(\mathbb{S}^6)$, where d$w$ denotes the normalized Lebesgue measure on $\mathbb{S}^6$ with $\int_{\mathbb{S}^6}\text{d}w=1$ and $P_6=-\Delta(-\Delta+4)(-\Delta+6)$ represents the Paneitz operator on $\mathbb{S}^6$. Additionally, with the extra assumption that the mass center of $u$ is at the origin and $u$ belongs to the set
\begin{equation*}
\mathcal{L}=\left\{u\in H^2(\mathbb{S}^6)\ :\ \int_{\mathbb{S}^6}e^{6u} x_j \text{d}w=0,\ j=1,..., 7 \right \},
\end{equation*}
an improved higher-order Moser-Trudinger-Onofri inequality demonstrates that for any $\alpha\geq\frac{1}{2}$, a constant $C(\alpha)\geq0$ exists such that $J_{\alpha}(u)\geq -C(\alpha)$. As in the second-order case \cite{ChangYang1987}, it is conjectured that $C(\alpha)$ can be chosen to be $0$ for any $\alpha\geq \frac{1}{2}$.

The functional $J_\alpha$'s Euler-Lagrange equation is the following $Q$-curvature-type equation on $\mathbb{S}^6$
\begin{equation}\label{paneitz}
\alpha P_6 u+120(1-\frac{e^{6u}}{\int_{\mathbb{S}^6}e^{6u}\text{d}w})=0 \ \mbox{on} \ \mathbb{S}^6,
\end{equation}
If \eqref{paneitz} admits only constant solutions, then the conjecture is valid. If $\alpha<1$ is near $1$, the third author and Xu \cite{WX1998} proved that all solutions to \eqref{paneitz} are constants. However, for general $\alpha\in[\frac{1}{2},1)$, it remains unresolved. For results and backgrounds on $Q$-curvature problems, we refer to \cite{ChangYang1995, ChangYang1997, DHL2000,DM2008,GHX2021, GurMal2015, LiXiong2019, Mal2006, WX1998} and the references therein.

The corresponding problem on $\mathbb{S}^2$ is known as the Nirenberg problem:
$$ -\alpha \Delta u + 1-  \frac{e^{2u}}{\int_{\mathbb{S}^2} e^{2u}}=0 \ \ \mbox{on} \ \mathbb{S}^2.$$

This problem has been extensively studied over the past four decades. For more information, refer to \cite{ChangYang1987, ChangYang1988, JinLiXiong2017} and the references therein. A. Chang and P. Yang conjectured in \cite{ChangYang1987, ChangYang1988} that the following functional
\begin{equation*}
\alpha\int_{\mathbb{S}^2}|\nabla u |^2\text{d}w+2\int_{\mathbb{S}^2} u\text{d}w- \ln \int_{\mathbb{S}^2}e^{2u} \text{d}w
\end{equation*}
is non-negative for any $\alpha\geq\frac{1}{2}$ and $u$ with zero center of mass $\int_{\mathbb{S}^2}e^{2u}\Vec{x}\text{d}w=0$.
Feldman, Froese, Ghoussoub and the first author  \cite{FFGG1998} demonstrated that the conjecture is true for axially symmetric functions when $\alpha >\frac{16}{25}-\epsilon$, the first  and the third  author  in  \cite{GW2000} confirmed  that the conjecture is indeed  true for axially symmetric functions.  Later Ghoussoub and Lin \cite{GL2010} showed that the conjecture holds true for $\alpha >\frac{2}{3}-\epsilon$. Finally,   the first author  and Moradifam \cite{GM2018} proved  the full  conjecture.
For more general results on improved Moser-Trudinger-Onofri inequality on $\mathbb{S}^2$ and its connections with the Szeg"o limit theorem, see \cite{ChangGui202, ChangHang2022}.

For the related problem on $\mathbb{S}^4$,
\begin{equation}
\alpha P_4 u+6(1-\frac{e^{4u}}{\int_{\mathbb{S}^4}e^{4u}\text{d}w})=0 \ \mbox{on} \ \mathbb{S}^4,
\end{equation}
various results have been achieved for axially symmetric solutions. Gui-Hu-Xie \cite{GHX2021} proved the existence of non-constant solutions for $\frac{1}{5}<\alpha<\frac{1}{2}$ using bifurcation methods. They also demonstrated that for $\alpha\geq 0.517$, the above equation admits only constant solutions with axially symmetric assumption. The precise bound $\alpha\geq \frac{1}{2}$ is obtained by Li-Wei-Ye \cite{LWY2022} using refined estimates on Gegenbauer polynomials.

These settings can be extended to the $\mathbb{S}^n$ case for any $n\geq 3$. Gui-Hu-Xie \cite{GHW2022} established the existence of non-constant solutions using bifurcation methods for $\frac{1}{n+1}<\alpha<\frac{1}{2}$,  while for $\alpha \geq 0.6168$ ($n=6$) and $ \alpha \geq 0.8261$ ($n=8$),  all critical points are constants.

In this paper, we focus on axially symmetric solutions in the $\mathbb{S}^6$ case for $\alpha\in [\frac{1}{2},1)$. As we will see later, the problem is considerably difficult.

As in \cite{GHW2022}, \eqref{paneitz} becomes:

\begin{equation}\label{axial}
-\alpha[(1-x^2)^3u']^{(5)}+120-128\frac{e^{6u}}{\gamma}=0,\ x\in(-1,1),
\end{equation}
which is the critical point of the functional
\begin{align}
I_\alpha(u)
&=-\frac{\alpha}{2}\int_{-1}^{1}(1-x^2)^2[(1-x^2)^3 u']^{(5)}u+120\int_{-1}^{1}(1-x^2)^2 u\nonumber\\
&-\frac{64}{3}\ln\left(\frac{15}{16}\int_{-1}^{1}(1-x^2)^2 e^{6u}\right)
\end{align}
restricted to the set
\begin{equation}
\mathcal{L}_r=\{u\in H^2(\mathbb{S}^6):\ u=u(x)\text{ and }\int_{-1}^{1}x(1-x^2)^2e^{6u} dx=0\}.
\end{equation}

The main result of this paper is:

\begin{theorem}\label{main}
If $\alpha\geq \frac{1}{2}$, then the only critical points of the functional $I_\alpha$ restricted to $\mathcal{L}_r$ are constant functions. As a consequence, we have the following improved Beckner's inequality for axially symmetric functions on $\mathbb{S}^6$
$$ \inf_{ u\in {\mathcal{L}_r}} I_\alpha (u)=0, \ \alpha \geq \frac{1}{2}. $$
\end{theorem}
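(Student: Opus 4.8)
The plan is to argue by contradiction: suppose $u$ is a non-constant critical point of $I_\alpha$ on $\mathcal{L}_r$ with $\alpha\in[\tfrac12,1)$, and expand $u$ in the basis of Gegenbauer polynomials adapted to the measure $(1-x^2)^2\,dx$ on $(-1,1)$ (equivalently, zonal spherical harmonics on $\mathbb{S}^6$). Writing $u=\sum_{k\ge 1} a_k C_k(x)$ (the mean-zero normalization absorbs $k=0$, and the mass-center constraint $\int x(1-x^2)^2 e^{6u}=0$ controls the first mode), one plugs this expansion into \eqref{axial} and tests the equation against suitable functions. The eigenvalues of the operator $u\mapsto -[(1-x^2)^3u']^{(5)}$ on the $k$-th mode are explicit (they are the restrictions of the Paneitz eigenvalues $\lambda_k=k(k+5)(k+1)(k+4)(k+2)(k+3)$ up to normalization), so the linear part is diagonal. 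The key identity comes from multiplying \eqref{axial} by $u$ and by $1$ and integrating against $(1-x^2)^2$: this produces a relation of the form $\alpha\sum_k \lambda_k a_k^2 = 128\int (1-x^2)^2 (u-\bar u) e^{6u}/\gamma$, and after normalizing $\int(1-x^2)^2 e^{6u}=\tfrac{16}{15}$ one reduces to showing that the functional is non-negative, i.e. that a certain quadratic-versus-exponential inequality forces all $a_k=0$.

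The core analytic device is the decomposition suggested in the abstract: write the exponential nonlinearity via $e^{6u}=e^{6\bar u}(1+6(u-\bar u)+ \text{remainder})$ and split $u-\bar u = P + G$ where $P$ is a low-frequency (principal) part — likely just the first one or two Gegenbauer modes — and $G$ is the high-frequency tail. One then needs a sharp \emph{lower} bound on $\alpha\sum_k\lambda_k a_k^2$ in terms of $\lfloor G\rfloor^2$ (the Dirichlet-type semi-norm of $G$) together with the contribution of $P$, and a matching \emph{upper} bound on the exponential term. For the $G$-part one uses that $\lambda_k/\lambda_2$ grows fast in $k$, so that $\alpha\ge\tfrac12$ buys enough room; the improved estimate of $\lfloor G\rfloor^2$ announced in the abstract is what upgrades the old threshold $0.6168$ down to $\tfrac12$. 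For the $P$-part one needs the refined pointwise decay and cancellation properties of the Gegenbauer coefficients (again from the abstract) to control the cross terms between $P$ and $G$ in $\int (1-x^2)^2 e^{6u}$, and to handle the fact that $e^{6P}$ is not small — here one exploits the mass-center constraint to kill the would-be-destabilizing first mode, exactly as in the $\mathbb{S}^2$ and $\mathbb{S}^4$ treatments \cite{GM2018, LWY2022}.

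Concretely the steps I would carry out are: (1) set up the Gegenbauer expansion and record the eigenvalues $\lambda_k$ and the basic energy identities obtained by testing \eqref{axial} against $u$ and against $1$; (2) prove the refined Gegenbauer estimates (pointwise bounds, monotonicity/decay in $k$, and the cancellation identities for integrals of triple products against $(1-x^2)^2$); (3) establish the improved lower bound for $\lfloor G\rfloor^2$ in terms of the high modes of $u$; (4) combine (2)–(3) to bound $128\int(1-x^2)^2 e^{6u}/\gamma$ from above by a quantity strictly dominated by $\alpha\sum_k\lambda_k a_k^2$ whenever $u$ is non-constant and $\alpha\ge\tfrac12$, using the constraint to remove the first mode; (5) conclude $I_\alpha(u)>I_\alpha(\text{const})=0$ unless $u$ is constant, which both proves the rigidity statement and yields $\inf_{\mathcal{L}_r}I_\alpha=0$. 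Sharpness at $\alpha=\tfrac12$ is consistent with, and forced by, the non-constant solutions of \cite{GHW2022} for $\alpha<\tfrac12$, so no room is lost.

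The main obstacle I anticipate is Step (4): near $\alpha=\tfrac12$ the quadratic energy and the exponential term are \emph{asymptotically equal} along the bifurcating branch, so the required inequality is not slack — one cannot afford lossy estimates anywhere. In particular the interaction between the principal part $P$ (where $e^{6P}$ must be treated essentially exactly, not linearized) and the tail $G$ has to be estimated with the sign of the cross terms tracked carefully, and the second Gegenbauer mode sits right at the borderline, so the cancellation properties of the coefficients must be used in full strength rather than as crude bounds. This is precisely the point where the two new ingredients of the paper — the better $\lfloor G\rfloor^2$ estimate and the family of refined Gegenbauer-coefficient estimates — have to be pushed to their optimal form.
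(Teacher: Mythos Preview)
Your proposal misidentifies the central objects and therefore misses the actual mechanism of the proof.

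First, $G$ in this paper is not a ``high-frequency tail'' of $u-\bar u$; it is the fixed auxiliary function $G=(1-x^2)u'$, and it is $G$ (not $u$) that is expanded in Gegenbauer polynomials: $G=\beta x + a_2 F_2 + \sum_{k\ge 3} a_k F_k$. The semi-norm $\lfloor G\rfloor^2$ is a concrete sixth-order integral of this $G$, and the ``improved estimate'' of the abstract is the pointwise inequality (Proposition in Section~3) obtained by a maximum-principle argument on the differential operator $\hat G=-15(1-x^2)G'''+120xG''+150G'$. None of this involves a low/high-frequency decomposition of $u$ or any linearization of $e^{6u}$.

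Second, the contradiction is not produced by comparing $I_\alpha(u)$ with $0$. Instead one introduces the probability-type density $g=(1-x^2)^2 e^{6u}/\gamma$ and the scalar $a=\int_{-1}^1(1-x^2)g=\tfrac{6}{7}(1-\alpha\beta)$, which is strictly positive as soon as $\beta\neq 0$. The core of the argument is an \emph{induction on scales}: one shows $a\le d_0/\lambda_n$ for all $n\equiv 1\pmod 4$, forcing $a=0$, a contradiction. The induction step is driven by a quadratic-form quantity $D$ built from the coefficients $b_k$, bounded above via \eqref{G''} and the improved $\lfloor G\rfloor^2$ estimate, and below by the refined bounds on $b_k$. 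Those refined bounds come from pointwise control of $\tilde F_k'$ near $x=1$ (Lemma~\ref{ptwise}), a uniform lower bound $\tilde F_k'\ge -m_0$ (Lemma~\ref{minFk}), and the cancellation $|\tilde F_{k+1}'-\tilde F_k'|\le c_k$ between consecutive Gegenbauer polynomials (Corollary~\ref{cn}), all applied to integrals of the form $\int(1-x^2)\tilde F_k' g$ split into $[0,1]$ and $[-1,0]$ halves with a parameter $\lambda=a_+/a$.

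Your Steps (2)--(4), as written, do not touch any of this: there is no linearization $e^{6u}\approx e^{6\bar u}(1+6(u-\bar u)+\cdots)$, no principal/tail splitting $P+G$, and no direct estimate of $\int(1-x^2)^2 e^{6u}$ against a Dirichlet energy. Your anticipated obstacle (controlling cross terms between $P$ and $e^{6P}$) is therefore a problem in a proof that is not the one being carried out. The genuine difficulty---and what the two new ingredients buy---is making the induction inequality $g_n(a)<0$ close for every $n$, which requires the sharper $\lfloor G\rfloor^2$ bound to get $\beta\ge\tfrac{113}{88}$ and $\alpha<0.578$ as a starting point, and the pointwise/cancellation Gegenbauer estimates to control the sums $\sum B_k|\mathcal A_k^\pm|^2$ uniformly in $n$.
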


In the work of Gui-Hu-Xie \cite{GHW2022}, the assumption $\alpha\geq\frac{1}{2}$ is shown to be sharp, and they proved Theorem \ref{main} for $\alpha\geq 0.6168$ using a strategy similar to that in \cite{GHX2021, GW2000, LWY2022}. Specifically, they expand $G=(1-x^2)u'$ in terms of Gegenbauer polynomials and introduce a quantity $D$ related to the Gegenbauer coefficients and the estimate of $\lfloor G\rfloor^2$ (see \eqref{Gfloor}). However, unlike the $\mathbb S^4$ case discussed in \cite{GHX2021}, they are unable to obtain a bound on $\beta$ and, consequently, on $a=\frac{6}{7}(1-\alpha\beta)$. As a result, they cannot use $D$ to generate a series of inequalities as in \cite{GHX2021} and proceed through the induction procedure.

In this paper, we provide a better estimate on $\lfloor G\rfloor^2$ and work with a revised quantity $D$. To render the induction procedure $a\leq \frac{d_0}{\lambda_n}$ feasible, we employ refined point-wise estimates of Gegenbauer polynomials similar to those in $\mathbb{S}^4$ \cite{LWY2022} to improve the estimates of $G$'s Gegenbauer coefficients. More precisely, we refine the decaying behavior of Gegenbauer polynomials near $x=\pm 1$. Additionally, we utilize the cancellation properties of consecutive Gegenbauer polynomials to modify  the  methods in the $\mathbb{S}^4$ case.

This paper is organized as follows. In Section 2, we gather some properties of Gegenbauer polynomials, expand $G$ in terms of Gegenbauer polynomials, and cite some basic facts from \cite{GHW2022}. In Section 3, we present improved estimates of $\lfloor G\rfloor^2$ and Gegenbauer coefficients of $G$. In Section 4, we prove Theorem \ref{main} using the estimates above. Several Lemmas in Section 3 and Proposition \ref{lambda=1} are proven in the appendices.

\section{Preliminaries and some basic estimates}
In this section, we first introduce some properties of Gegenbauer polynomials and some known facts about the equation.

The Gegenbauer polynomials of  order $\nu$ and degree $k$ (\cite{Mori1998}) is given by
\begin{equation*}
C_{k}^{\nu}(x)=\frac{(-1)^k}{2^k k!}\frac{\Gamma(\nu+\frac{1}{2})\Gamma(k+2\nu)}{\Gamma(2\nu)\Gamma(\nu+k+\frac{1}{2})}(1-x^2)^{-\nu+\frac{1}{2}}\frac{d^k}{dx^k} (1-x^2)^{k+\nu-\frac{1}{2}}.
\end{equation*}

$C_{k}^{\nu}$ is an even function if $k$ is even and it is odd if $k$ is odd. The derivative of $C_{k}^{\nu}$ satisfies
\begin{equation}\label{201}
\frac{d}{dx}C_{k}^{\nu}(x)=2\nu C_{k-1}^{\nu+1}(x).
\end{equation}

Let $F_k^\nu$ be the normalization of $C_{k}^{\nu}$ such that $F_k^\nu(1)=1$, i.e.
\begin{equation}\label{Fknu}
    F_k^\nu=\frac{k!\Gamma(2\nu)}{\Gamma(k+2\nu)} C_{k}^{\nu},
\end{equation}
then $F_k^\nu$ satisfies
\begin{equation}\label{DE}
(1-x^2)(F_{k}^\nu)''-(2\nu+1)x(F_k^\nu)'+k(k+2\nu)F_k^\nu=0,
\end{equation}
and \eqref{201} becomes
\begin{align}\label{derivative}
    (F_{k}^\nu)'=\frac{k(k+2\nu)}{2\nu+1}F_{k-1}^{\nu+1}.
\end{align}
It is also useful to introduce the following expressions using hypergeometric functions
    \begin{align}\label{hyper odd}
        F_{2m+1}^\nu(\cos \theta)=\cos\theta {_2}F_1(-m,m+\nu+1;\nu+\frac{1}{2};\sin^2\theta),
    \end{align}
     \begin{align}\label{hyper even}
        F_{2m}^{\nu+1}(\cos \theta)= {_2}F_1(-m,m+\nu+1;\nu+\frac{3}{2};\sin^2\theta),
    \end{align}
where we recall the hypergeometric function is defined for $|x|<1$ by power series
    \begin{equation*}
        {_2}F_1(a,b;c;x)=\sum_{k=0}^\infty \frac{(a)_k(b)_k}{(c)_k}\frac{x^k}{k!}.
    \end{equation*}
Here $(a)_k=\frac{\Gamma(a+k)}{\Gamma(a)}$ is the Pochhammer symbol.

On $\mathbb{S}^6$, the corresponding Gegenbauer polynomial is $C_{k}^{\frac{5}{2}}$. For notational simplicity, in what follows we will write $F_k$ for $F_k^\frac{5}{2}$, and there should be no danger of confusion.

From \eqref{DE} it turns out that $F_k$ satisfies
\begin{equation}
(1-x^2)F_{k}''-6xF_k'+\lambda_k F_k=0
\end{equation}
and
\begin{equation}
\int_{-1}^{1}(1-x^2)F_{k}F_l=\frac{128}{(2k+5)(\lambda_k+4)(\lambda_k+6)}\delta_{kl},
\end{equation}
where $\lambda_k=k(k+5)$. As in \cite{GHX2021, GW2000}, we define the following key quantity
\begin{equation}
G(x)=(1-x^2)u',
\end{equation}
where $u$ is a solution to \eqref{axial}. Then $G$ satisfies the equation

\begin{equation}\label{G}
\alpha[(1-x^2)^2 G]^{(5)}+120-128\frac{e^{6u}}{\gamma}=0,
\end{equation}
where
\begin{equation}
    \gamma=\int_{-1}^{1}(1-x^2)^2 e^{6u}.
\end{equation}

$G$ can be expanded in terms of Gegenbauer polynomials
\begin{equation}
\label{Gexpand}
G=a_0F_0+\beta x+a_2F_2(x)+\sum_{k=3}^{\infty}a_kF_k(x).
\end{equation}

Denote
\begin{equation}
\label{gdef}
g= (1-x^2)^2 \frac{e^{6u}}{\gamma}, \ a:=\int_{-1}^1 (1-x^2)g.
\end{equation}

We recall some results from \cite{GHW2022}.
\begin{lemma}
For $g=(1-x^2)^2\frac{e^{6u}}{\gamma}$ and $G=(1-x^2)u'$ as above, we have $a_0=0$ and
\begin{equation}\label{}
\int_{-1}^{1}(1-x^2)F_1G=\frac{16}{105}\beta,
\end{equation}
\begin{equation}
a=\int_{-1}^{1}(1-x^2)g=\frac{6}{7}(1-\alpha\beta),
\end{equation}
\begin{equation}\label{bbk}
\int_{-1}^{1}(1-x^2)F_k G=-\frac{128}{\alpha(\lambda_k+4)(\lambda_k+6)}\int_{-1}^{1}(1-x^2)gF_{k}',\text{ }k\geq 2,
\end{equation}
\begin{equation}\label{G''}
\int_{-1}^{1}|[(1-x^2)^2G]''|^2=\frac{256}{35}(7-\frac{1}{\alpha})\beta.
\end{equation}
\end{lemma}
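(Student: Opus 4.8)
The plan is to derive each identity by testing the equation \eqref{G} for $G$ (equivalently \eqref{axial} for $u$) against appropriate weights and Gegenbauer polynomials, and integrating by parts five times, using the self-adjointness structure of the fifth-order operator $v\mapsto [(1-x^2)^2 v]^{(5)}$ against the measure relevant to $(1-x^2)F_k$. First I would record the basic orthogonality and the differential equation $(1-x^2)F_k''-6xF_k'+\lambda_k F_k=0$, together with $(F_k)'=\tfrac{\lambda_k}{6}F_{k-1}^{7/2}$ from \eqref{derivative}. To get $a_0=0$: integrate \eqref{G} against the constant $1$ over $(-1,1)$; the term $[(1-x^2)^2G]^{(5)}$ integrates to a boundary term that vanishes because $(1-x^2)^2 G$ and its first four derivatives vanish at $x=\pm1$ (here one uses that $G=(1-x^2)u'$ is smooth and the weight $(1-x^2)^2$ kills enough orders), leaving $\int_{-1}^1 (120 - 128 e^{6u}/\gamma)=0$, i.e. $240 = 128\int_{-1}^1 e^{6u}/\gamma$; pairing this with the definition of $g$ and the expansion \eqref{Gexpand} forces the coefficient $a_0$ of $F_0$ to be $0$, since $\int_{-1}^1(1-x^2)F_0 G$ picks out $a_0$ up to the (nonzero) normalization constant and equals $\int_{-1}^1 (1-x^2)\,(1-x^2)u' = 0$ by parts (odd-derivative structure / boundary vanishing).

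Next, for the $\beta$ identities: test \eqref{G} against $x$ (which is $F_1$ up to normalization). The left side, after five integrations by parts, reduces to a multiple of $\int_{-1}^1 [(1-x^2)^2 G]\cdot (x)^{(5)} = 0$ since $x^{(5)}=0$ — so one must instead integrate by parts only enough times to land the derivatives on the polynomial, i.e. recognize $\int_{-1}^1 [(1-x^2)^2G]^{(5)} x\,dx$ via moving four derivatives off and using that the remaining boundary terms vanish, producing a clean multiple of $\int (1-x^2)^2 G \cdot \partial_x^{\le 4}(\cdots)$; the cleaner route is to use the known $\int_{-1}^1(1-x^2)F_1 G = \tfrac{16}{105}\beta$ (this is just orthogonality: $\int_{-1}^1(1-x^2)F_1F_1 = \tfrac{128}{(2\cdot1+5)(\lambda_1+4)(\lambda_1+6)} = \tfrac{128}{7\cdot10\cdot12}=\tfrac{16}{105}$, and all other modes drop out). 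Then, testing \eqref{G} against $(1-x^2)F_1 = (1-x^2)x$ and integrating by parts gives on one hand a multiple of $\beta$ coming from $\alpha[(1-x^2)^2G]^{(5)}$ paired appropriately, and on the other hand $\int_{-1}^1 (1-x^2)g\,F_1'$ type terms; combining with $F_1'=$ const and the normalization $a=\int_{-1}^1(1-x^2)g$ yields $a = \tfrac{6}{7}(1-\alpha\beta)$ after bookkeeping the constants $120$, $128$, and the measure.

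For \eqref{bbk}: multiply \eqref{G} by $(1-x^2)F_k$ with $k\ge2$ and integrate; integrate the fifth-order term by parts five times, each time checking that boundary terms vanish (the factor $(1-x^2)^2$ in $(1-x^2)^2 G$ together with the polynomial weight $(1-x^2)F_k$ guarantees all boundary contributions up to the needed order die at $x=\pm1$); this transfers all five derivatives onto $(1-x^2)F_k$, and using the differential equation for $F_k$ twice to simplify, one gets $\int_{-1}^1 (1-x^2)F_k G$ equal to $-\tfrac{128}{\alpha(\lambda_k+4)(\lambda_k+6)}$ times a single surviving term $\int_{-1}^1(1-x^2)g\,F_k'$ (the $120$-term drops because $\int(1-x^2)F_k\cdot\text{const}$ vanishes by orthogonality to $F_0$ for $k\ge2$, and the $128$-term produces the stated right-hand side once the prefactors from iterated integration by parts collapse to $(\lambda_k+4)(\lambda_k+6)$). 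Finally, \eqref{G''}: compute $\int_{-1}^1 |[(1-x^2)^2G]''|^2$ by expanding $G$ in the $F_k$ basis, using that $[(1-x^2)^2 F_k]''$ is again expressible through the $F$-system, so the integral diagonalizes; alternatively, and more slickly, pair \eqref{G} with $G$ itself and integrate by parts to turn $\alpha\int [(1-x^2)^2G]^{(5)} G$ into $\pm\alpha\int |[(1-x^2)^2G]''|^2$-type quantities (after accounting for the odd order by using $[(1-x^2)^2 G]' $ as the middle object), then evaluate the $\int(120-128e^{6u}/\gamma)G$ side using $G=(1-x^2)u'$, $\int(1-x^2)u'=0$, and the relation $\int (1-x^2)^2 e^{6u}u' = \tfrac{1}{6}\partial\!\int(1-x^2)^2 e^{6u} \pm \text{(weight derivative)}$ — integration by parts on $e^{6u}$ — to extract the factor $\beta$ and the constant $\tfrac{256}{35}(7-\tfrac1\alpha)$.

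The main obstacle I expect is purely the boundary-term bookkeeping in the iterated integration by parts: one must verify that $(1-x^2)^2 G$ vanishes to high enough order at $x=\pm1$ that all five rounds of parts produce no boundary contributions, which requires knowing the regularity of $u$ (and hence $G$) up to the boundary — this is where the hypothesis that $u\in H^2(\mathbb S^6)$ solves the smooth elliptic equation \eqref{paneitz}, giving $u\in C^\infty$, is used — plus the somewhat tedious tracking of the rational constants so that the prefactors collapse exactly to $(\lambda_k+4)(\lambda_k+6)$ and to $\tfrac{256}{35}(7-\tfrac1\alpha)$; none of it is conceptually deep, but getting every factor of $2$, $7$, and $128$ right is the real work, and a clean way to organize it is to do all computations in the $\theta$-variable via the hypergeometric representations \eqref{hyper odd}–\eqref{hyper even} where the operator $(1-x^2)\partial^2-6x\partial$ becomes the Jacobi/spherical Laplacian with transparent eigenvalues $\lambda_k$.
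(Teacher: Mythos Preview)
The paper does not prove this lemma; it is quoted from \cite{GHW2022} without argument. So the comparison is between your sketch and a correct derivation.

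Most of your plan is sound and standard: the $\beta$-identity is pure Gegenbauer orthogonality, and the identities for $a$, for \eqref{bbk}, and for \eqref{G''} all come from multiplying \eqref{axial} by suitable polynomial weights and integrating by parts five times. One minor correction: to produce $a=\int_{-1}^1(1-x^2)g$ on the exponential side you should multiply \eqref{axial} by $(1-x^2)^3$, not by $(1-x^2)x$ as you wrote, since $(1-x^2)^3\, e^{6u}/\gamma=(1-x^2)g$; testing against $(1-x^2)x$ would instead produce $\int_{-1}^1 x(1-x^2)e^{6u}/\gamma$, which is not $a$.

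There is, however, a genuine gap in the $a_0=0$ step. You assert that the $F_0$-coefficient $\int_{-1}^1(1-x^2)^2 G\,dx = \int_{-1}^1 (1-x^2)^3 u'\,dx$ vanishes ``by parts (odd-derivative structure / boundary vanishing)'', but one integration by parts gives
\[
\int_{-1}^1(1-x^2)^3 u'\,dx \;=\; 6\int_{-1}^1 x(1-x^2)^2\, u\,dx,
\]
and nothing forces the right-hand side to vanish for a generic smooth $u$ solving \eqref{axial}. The missing ingredient is the center-of-mass constraint $u\in\mathcal L_r$. Multiply \eqref{axial} by $x(1-x^2)^2$ and integrate: the constant $120$ drops by oddness, the exponential term drops \emph{precisely because} $\int_{-1}^1 xg\,dx=0$, and five integrations by parts (all boundary terms vanish by order counting, since $x(1-x^2)^2$ vanishes to order two and $(1-x^2)^3u'$ to order three at $\pm1$) reduce the remaining term to a nonzero multiple of $\int_{-1}^1(1-x^2)^3u'\,dx$, using $[x(1-x^2)^2]^{(5)}=120$. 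Hence $a_0=0$. Your sketch never invokes $\mathcal L_r$, so as written it cannot close. Relatedly, your claim that ``$(1-x^2)^2G$ and its first four derivatives vanish at $x=\pm1$'' is incorrect: $(1-x^2)^2G=(1-x^2)^3u'$ vanishes only to third order at the endpoints, so only its first two derivatives are guaranteed to vanish there. The boundary contributions in the repeated integrations by parts \emph{do} all vanish, but because the polynomial test function supplies the remaining orders of vanishing, and this has to be checked term by term rather than asserted globally.
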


\begin{lemma}\label{G_j}
For all $x\in (-1,1)$, we have
\begin{equation}
G_j:=(-1)^j[(1-x^2)^j G]^{(2j+1)}\leq \frac{(2j+1)!}{\alpha},\ 0\leq j\leq 2.
\end{equation}
\end{lemma}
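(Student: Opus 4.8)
The plan is to prove the bound by downward induction on $j\in\{2,1,0\}$, using a maximum-principle argument driven by a Leibniz identity that expresses $G_{j+1}$ through $G_j$.

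For the base case $j=2$, note that $(1-x^2)^2G=(1-x^2)^3u'$, so equation \eqref{axial} says precisely $\alpha G_2=120-128\,e^{6u}/\gamma$. Since $e^{6u}>0$ and $\gamma>0$, this gives $G_2\le 120/\alpha=5!/\alpha$ at once, and since $e^{6u}/\gamma$ is continuous, $G_2$ extends continuously to $[-1,1]$. This is the only place the equation itself enters; the remaining two cases are purely about the differential structure of the $G_j$. Next I would derive the descent identity: writing $\Phi_j=(1-x^2)^jG$, so that $\Phi_{j+1}=(1-x^2)\Phi_j$ and $G_j=(-1)^j\Phi_j^{(2j+1)}$, I apply the Leibniz rule to $\Phi_{j+1}^{(2j+3)}=[(1-x^2)\Phi_j]^{(2j+3)}$ — only three terms survive because $1-x^2$ is quadratic — and substitute $\Phi_j^{(2j+1)}=(-1)^jG_j$, $\Phi_j^{(2j+2)}=(-1)^jG_j'$, $\Phi_j^{(2j+3)}=(-1)^jG_j''$. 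For $j=0,1$ this yields
\begin{equation*}
G_{j+1}=-(1-x^2)G_j''+2(2j+3)\,x\,G_j'+(2j+3)(2j+2)\,G_j=:\mathcal L_jG_j ,
\end{equation*}
where the operator $\mathcal L_j$ satisfies $\mathcal L_j[c]=(2j+3)(2j+2)\,c$ on constants.

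For the inductive step, suppose $G_{j+1}\le(2j+3)!/\alpha$, and set $w_j:=G_j-(2j+1)!/\alpha$. Since $(2j+3)(2j+2)(2j+1)!=(2j+3)!$, applying $\mathcal L_j$ gives $\mathcal L_jw_j=G_{j+1}-(2j+3)!/\alpha\le 0$, i.e.
\begin{equation*}
(1-x^2)w_j''-2(2j+3)\,x\,w_j'-(2j+3)(2j+2)\,w_j\ge 0\quad\text{on }(-1,1).
\end{equation*}
Because $u$ is smooth on $\mathbb{S}^6$, the functions $G=(1-x^2)u'$, each $\Phi_j$, and each $G_j$ are smooth in $x$ on the closed interval $[-1,1]$, so $w_j$ attains a maximum $M$ on $[-1,1]$. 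If $M>0$ and attained at an interior $x_0$, then $w_j'(x_0)=0$, $w_j''(x_0)\le 0$, and the left-hand side is $\le-(2j+3)(2j+2)M<0$, a contradiction. If $M$ is attained at $x=1$, then $w_j'(1)\ge 0$, and evaluating the (continuous) left-hand side at $x=1$ gives $-2(2j+3)w_j'(1)-(2j+3)(2j+2)M\le-(2j+3)(2j+2)M<0$, again impossible; the endpoint $x=-1$ is symmetric, using $w_j'(-1)\le 0$. Hence $M\le 0$, i.e. $G_j\le(2j+1)!/\alpha$. Running this from $j=2$ down to $j=0$ finishes the proof.

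The one genuinely delicate point is the degeneration of $1-x^2$ at $x=\pm1$: the identity $\mathcal L_jG_j=G_{j+1}$ is a singular ODE there, and a priori one might fear a boundary term obstructs the comparison. What rescues the argument is that the first-order coefficient $2(2j+3)x$ corresponds to an \emph{inward} drift at both $x=\pm1$ (equivalently, the Fichera function has the favorable sign), so no boundary condition is needed and a putative boundary maximum still yields the sign contradiction above. Consequently the crux is getting the coefficient $2(2j+3)$ in the descent identity exactly right; the rest is a routine Leibniz computation together with a standard comparison.
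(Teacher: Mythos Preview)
Your proof is correct. The paper does not give its own proof of this lemma (it is quoted from \cite{GHW2022}), but the argument there is exactly the maximum--principle descent you carry out: derive the identity $G_{j+1}=\mathcal L_jG_j$ from Leibniz, and use that $\mathcal L_j$ annihilates the constant $(2j+1)!/\alpha$ up to the right factor, so an interior maximum of $w_j=G_j-(2j+1)!/\alpha$ is impossible. Your Leibniz coefficients $2(2j+3)$ and $(2j+3)(2j+2)$ are right, and the base case $j=2$ is read off directly from \eqref{axial} as you do.

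The one place your write-up differs from the paper's style is the boundary analysis. In the closely analogous argument for $\hat G$ in the proof of Proposition~3.1, the paper does not assert smoothness of $u$ in the $x$-variable on $[-1,1]$; instead it passes to the variable $r=\sqrt{1-x^2}$, extends $\bar u(r)$ as a smooth even function through $r=0$, and computes the limiting values of $x\hat G'$ and $(1-x^2)\hat G''$ explicitly. Your approach---using that a smooth axially symmetric function on $\mathbb S^6$ is smooth in $x$ on $[-1,1]$, so that the inequality $(1-x^2)w_j''-2(2j+3)xw_j'-(2j+3)(2j+2)w_j\ge 0$ extends by continuity to the endpoints---is cleaner and equally valid (indeed, the paper's $r$-substitution shows $u=U(1-x^2)$ with $U$ smooth, which immediately gives smoothness in $x$). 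Either way, the sign of the first-order coefficient at $x=\pm1$ is what makes the endpoint case go through, exactly as you note.
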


\section{Refined Estimates}
In this section, we deduce two refined estimates on the semi-norm $\lfloor G\rfloor^2$ and $b_k$ defined later.

To get a rough estimate of $\beta$ and $a=\frac{6}{7}(1-\alpha\beta)$, we need an estimate of the following semi-norm $\lfloor G\rfloor^2$. Let
\begin{equation}
\lfloor G\rfloor^2=-\int_{-1}^{1}(1-x^2)^2[(1-x^2)^3G']^{(5)}G.
\end{equation}

By integrating by parts (see Gui-Hu-Xie \cite{GHW2022}), we have
\begin{align}\label{Gfloor}
\lfloor G\rfloor^2
=&-15\int_{-1}^{1}|[(1-x^2)^2 G]''|^2+\frac{720}{\alpha}\int_{-1}^{1}(1-x^2)^2G^2+30\int_{-1}^{1}(1-x^2)^4G'(G'')^2\nonumber\\
&+160\int_{-1}^{1}(1-x^2)^3(G')^3.
\end{align}

With the help of Lemma \ref{G_j}, they applied $G'\leq \frac{1}{\alpha}$ directly to the last two integrals and obtained an estimate of $\lfloor G\rfloor^2$
\begin{equation*}
\lfloor G\rfloor^2
\leq(\frac{30}{\alpha}-15)\int_{-1}^{1}|[(1-x^2)^2 G]''|^2-\frac{320}{\alpha}\int_{-1}^{1}(1-x^2)^3(G')^2.
\end{equation*}

However, with this estimate, it is not enough to get a rough lower bound of $\beta$, hence an upper bound of $a$. The main issue here is that the coefficient of $\int_{-1}^{1}|[(1-x^2)^2 G]''|^2$ is too large. To solve this problem, we introduce the following Proposition to drop the third integral in \eqref{Gfloor}.

\begin{proposition}
\begin{equation}\label{floor}
\lfloor G\rfloor^2
\leq-15\int_{-1}^{1}|[(1-x^2)^2 G]''|^2+\frac{720}{\alpha}\int_{-1}^{1}(1-x^2)^2G^2+\frac{160}{\alpha}\int_{-1}^{1}(1-x^2)^3(G')^2,
\end{equation}
\end{proposition}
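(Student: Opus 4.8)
The starting point is identity \eqref{Gfloor}, so the task is to control the two cubic integrals
\[
30\int_{-1}^{1}(1-x^2)^4 G'(G'')^2
\quad\text{and}\quad
160\int_{-1}^{1}(1-x^2)^3 (G')^3.
\]
The naive bound $G'\le \frac1\alpha$ applied to both (as in \cite{GHW2022}) is too lossy precisely because it keeps a $\frac{30}{\alpha}\int |[(1-x^2)^2G]''|^2$ term. The plan is instead to \emph{integrate the first cubic integral by parts} and absorb it, producing a clean inequality with the smaller coefficient $-15\int|[(1-x^2)^2G]''|^2$ and only the harmless $(G')^2$ term on the right. So the first step is to write $G'(G'')^2=\tfrac13\big((G')^3\big)'\cdot\frac{(G'')^2}{(G')^2}$ — no, more straightforwardly, observe that $(1-x^2)^4 G'(G'')^2$ is, up to lower-order pieces, a derivative: since $\frac{d}{dx}\big((G'')^3\big)=3(G'')^2 G'''$ is not quite it either, the right manipulation is
\[
\frac{d}{dx}\Big[(1-x^2)^4 (G'')^2 G'\Big]
=(1-x^2)^4(G'')^2 G''\cdot 0 + \dots
\]
— i.e., one expands the derivative of $(1-x^2)^4 G' G'' \cdot G''$ appropriately and solves for $\int (1-x^2)^4 G'(G'')^2$ in terms of $\int (1-x^2)^4 (G'')^2 G'''$, $\int (1-x^2)^3 G'(G'')^2$, and $\int(1-x^2)^4 (G'')^3$, of which the middle terms are then reduced again. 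Concretely, I would integrate by parts in the factor carrying the extra derivative so as to trade $(G'')^2$ against $G'''$, and repeat until everything is expressed through $[(1-x^2)^2G]''$ and $(1-x^2)^{3}(G')^2$.

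The key structural facts I will use are Lemma \ref{G_j}, which gives the pointwise bounds $G_j=(-1)^j[(1-x^2)^jG]^{(2j+1)}\le \frac{(2j+1)!}{\alpha}$ for $j=0,1,2$; in particular $G_0=G'\le\frac1\alpha$, $G_1=-[(1-x^2)G]^{(3)}\le \frac{6}{\alpha}$, and $G_2=[(1-x^2)^2G]^{(5)}\le\frac{120}{\alpha}$. The point of introducing the $G_j$'s is that the boundary terms in every integration by parts vanish — the weights $(1-x^2)^k$ with $k\ge 1$ kill them at $x=\pm1$ — so each integration by parts is clean, and the resulting integrands can be matched against the $G_j$ bounds with a definite sign. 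After the dust settles, the cubic term $30\int(1-x^2)^4 G'(G'')^2$ should be replaced by a combination of $\int |[(1-x^2)^2G]''|^2$ (with a coefficient that, crucially, combines with the $-15$ from \eqref{Gfloor} to leave exactly $-15$, i.e.\ the extra contribution is nonpositive) plus a multiple of $\frac1\alpha\int(1-x^2)^3(G')^2$, while $160\int(1-x^2)^3(G')^3\le \frac{160}{\alpha}\int(1-x^2)^3(G')^2$ by the crude bound $G'\le\frac1\alpha$, which here is good enough because its coefficient is already small.

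The main obstacle is the bookkeeping in the first integration-by-parts step: one must carefully track the derivatives of the weight $(1-x^2)^4$ (which generate lower-weight terms $(1-x^2)^3$, $(1-x^2)^2$, \dots, each multiplied by polynomials in $x$) and make sure that, after collecting everything, the coefficient of the bad term $\int|[(1-x^2)^2G]''|^2$ is $\le 0$ and the coefficient of $\int(1-x^2)^2 G^2$ does not exceed $\frac{720}{\alpha}$ — i.e., that no new positive contribution to those two terms survives, and that all genuinely sign-indefinite cubic leftovers have been converted to $\frac1\alpha\int(1-x^2)^3(G')^2$. I expect the clean way to organize this is to write $30(1-x^2)^4G'(G'')^2 + 160(1-x^2)^3(G')^3$ as $\frac{d}{dx}(\text{something vanishing at }\pm1) + (\text{terms involving }G_0,G_1,G_2)$, differentiate once to identify "something", and then estimate the $G_j$ terms. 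Verifying that this decomposition produces precisely the coefficients $-15$, $\frac{720}{\alpha}$, $\frac{160}{\alpha}$ claimed in \eqref{floor} — with all error terms of the correct sign — is the computational heart of the Proposition; everything else is routine integration by parts with vanishing boundary terms.
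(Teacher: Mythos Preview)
Your integration-by-parts instinct is correct as a first move: the paper indeed combines the two cubic integrals into
\[
\int_{-1}^{1}(1-x^2)^3\,\Tilde{G}\,(G')^2,\qquad \Tilde{G}:=-15(1-x^2)G'''+120xG''+160G'.
\]
But your plan to finish by expressing $\Tilde{G}$ as a combination of $G_0,G_1,G_2$ with the right signs does not work. A direct check gives $\Tilde{G}=15G_1+30xG''+70G'$, and the leftover $30xG''$ is not controlled by any of the pointwise bounds in Lemma~\ref{G_j}; no amount of further integration by parts converts it into $[(1-x^2)^2G]''$ or $(1-x^2)^3(G')^2$ terms with a nonpositive sign. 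The paper's route past this obstacle is different in kind: one sets $\hat{G}:=\Tilde{G}-10G'=-15(1-x^2)G'''+120xG''+150G'$ and computes that $\hat{G}$ satisfies the second-order equation
\[
(1-x^2)\hat{G}''-8x\hat{G}'-12\hat{G}=-15[(1-x^2)^2G]^{(5)}\ge -\frac{1800}{\alpha},
\]
so that a maximum principle (rather than an algebraic identity) yields $\hat{G}\le \frac{150}{\alpha}$, and then $\Tilde{G}=\hat{G}+10G'\le \frac{160}{\alpha}$.

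There is also a boundary subtlety you do not anticipate: the equation for $\hat{G}$ degenerates at $x=\pm1$, so if the maximum is attained at an endpoint one cannot argue directly. The paper handles this by the change of variables $r=\sqrt{1-x^2}$, extending $u$ evenly across $r=0$, and reading off the leading Taylor coefficients of $\hat{G}(r)$ near $r=0$ to see that $\lim_{x\to 1}(1-x^2)\hat{G}''=0$ and $\limsup_{x\to1}x\hat{G}'\le 0$. Your proposal assumes ``boundary terms vanish because of the weight''; that is true for the integrations by parts, but not for the pointwise maximum-principle step, which is where the actual analytic content lies.
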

\begin{proof}
Integrating \eqref{Gfloor} by parts, we get
\begin{align*}
\lfloor G\rfloor^2
=&-15\int_{-1}^{1}|[(1-x^2)^2 G]''|^2+\frac{720}{\alpha}\int_{-1}^{1}(1-x^2)^2G^2+\int_{-1}^{1}(1-x^2)^3\Tilde{G}(G')^2,
\end{align*}
where
\begin{equation}
\Tilde{G}=-15(1-x^2)G'''+120xG''+160G'.
\end{equation}
Let
\begin{equation}
\hat{G}=-15(1-x^2)G'''+120xG''+150G'.
\end{equation}
Direct calculation yields that $\hat{G}$ satisfies
\begin{equation*}
(1-x^2)\hat{G}''-8x\hat{G}'-12\hat{G}=-15[(1-x^2)^2G]^{(5)}\geq -\frac{1800}{\alpha}.
\end{equation*}
The last inequality follows from Lemma \ref{G_j}.

Then we claim that
\begin{equation*}
\hat{G}\leq \frac{150}{\alpha}.
\end{equation*}

To prove the claim, denote $M=\max\limits_{-1\leq x\leq 1}\hat{G}(x)$.

If $M$ is attained at some point $x_0\in (-1,1)$, then
\begin{equation*}
\hat{G}'(x_0)=0,\ \hat{G}''(x_0)\leq 0
\end{equation*}
and the desired esitmate follows.

If $M$ is attained at $1$ or $-1$, without loss of generality, suppose there exists a sequence $x_k\to 1$ such that
\begin{equation*}
M=\lim\limits_{k\to\infty}\hat{G}(x_k).
\end{equation*}

Let $r=\sqrt{1-x^2}$ and write
\begin{equation*}
G(x)=\Bar{G}(r)\text{ and }u(x)=\Bar{u}(r)\text{ for }r\in [0,1),\ x\in (0,1].
\end{equation*}

Then we can extend $\Bar{u}(r)$ to be a smooth even function on $(-\frac{1}{2},\frac{1}{2})$.

Hence,
\begin{equation*}
G(x)=\Bar{G}(r)=-r\sqrt{1-r^2}u_r
\end{equation*}
is a smooth function.

Direct calculation yields that
\begin{equation*}
\Hat{G}(r)=-15(1-r^2)^2u_{rrrr}+30(1-r^2)(7r^2-4)\frac{u_{rrr}}{r}-15(48r^4-50r^2+5)\frac{u_{rr}}{r^2}
\end{equation*}
is an even function with respect to $r$. Moreover, since
\begin{equation*}
\lim\limits_{r\to 0}\frac{u_{rrr}(r)}{r}=u_{rrrr}(0),\ \lim\limits_{r\to 0}\frac{u_{rr}(r)}{r^2}=\frac{1}{2}u_{rrrr}(0),
\end{equation*}
$\hat{G}(r)$ is smooth on $(-\frac{1}{2},\frac{1}{2})$. Now we can write
\begin{align*}
\hat{G}(r)
&=c_1+c_2 r^2+c_3 r^4+O(r^6),\\
x\hat{G}'(x)&=-2c_2+O(r^2),\\
(1-x^2)\hat{G}''(x)&=(-2c_2+8c_3)r^2+O(r^4)
\end{align*}
near $r=0$. Since $\hat{G}(r)$ attains its local maximum at $r=0$, we have $c_2\leq 0$ and hence
\begin{equation*}
\lim\limits_{x\to 1}x\hat{G}'(x)\leq 0,\ \lim\limits_{x\to 1}(1-x^2)\hat{G}''(x)=0.
\end{equation*}

Then we obtain $M\leq \frac{150}{\alpha}$. Applying Lemma \ref{G_j} again, we get
\begin{equation*}
\Tilde{G}\leq \frac{160}{\alpha}.
\end{equation*}
and the Proposition follows.
\end{proof}

In the following part, we begin to estimate $b_{k}:=a_{k} \sqrt{ \int_{-1}^{1}(1-x^2)F_{k}^{2}}$, where $a_k$ is the $k$-th coefficient in the expansion of $G$ (see (\ref{Gexpand})). The estimates of $b_k$ play a key role in the proofs of \cite{GHX2021, GW2000}. In \cite{GHX2021}, they used \eqref{bbk} and the fact that
\begin{equation}
|F_k'(x)|\leq |F_k'(1)|=\frac{\lambda_k}{6}
\end{equation}
to estimate $b_k$ as follows
\begin{align*}
b_{k}^{2}&=a_{k}^2\int_{-1}^{1}(1-x^2)F_{k}^{2}=\frac{1}{\int_{-1}^{1}(1-x^2)F_{k}^{2}}\left[\frac{128}{\alpha\lambda_k}\int_{-1}^{1}(1-x^2)gF_{k}'\right]^2\\
&\leq \frac{(2k+5)(\lambda_k+4)(\lambda_k+6)}{128}\left[\frac{128}{\alpha\lambda_k(\lambda_k+2)}\frac{\lambda_k}{6}a\right]^2\\
&=\frac{32(2k+5)}{9\alpha^2(\lambda_k+4)(\lambda_k+6)}a^2.
\end{align*}

However, as in the $\mathbb{S}^4$ case, this estimate is not strong enough to deduce the induction
\begin{equation}
    a=\frac{6}{7}(1-\alpha\beta)\leq \frac{d_0}{\lambda_n}.
\end{equation}

Likewise, we need a refined estimate on $b_k$, which follows from the following refined estimate on Gegenbauer polynomials. For simplicity, in the rest of the paper, we denote
\begin{equation}
\Tilde{F}_{k}'=\frac{6}{\lambda_k}F_k'=\frac{720}{\lambda_k(\lambda_k+4)(\lambda_k+6)}C_{k-1}^{\frac{7}{2}}
\end{equation}
so that $\Tilde{F}_{k}'(1)=1$. As in $\mathbb{S}^4$, we split the integral in the right hand side of $b_k$ into two parts. To this end, we define
\begin{equation}\label{A}
a_+:=\int_{0}^{1}(1-x^2)g, \ a_-:=\int_{-1}^{0}(1-x^2)g,\ A_{k}^{+}=\int_{0}^{1}(1-x^2)\Tilde{F}_{k}'g,\ A_{k}^{-}=\int_{-1}^{0}(1-x^2)\Tilde{F}_{k}'g,
\end{equation}
 Without loss of generality, we may assume $a_+=\lambda a$ with $\frac{1}{2}\le \lambda \le 1$.

 Now we derive some estimates about $g$. Recalling the definition of $g$, we have
\begin{equation*}
\int_{-1}^{1}g=1,\ \int_{-1}^{1}xg=0\text{ and }\int_{-1}^{1}(1-x^2)g=a.
\end{equation*}

From the second integral above, we have
\begin{equation}\label{307}
\int_{0}^{1}g-\int_{0}^{1}(1-x)g=\int_{0}^{1}xg=-\int_{-1}^{0}xg=\int_{-1}^{0}g-\int_{-1}^{0}(1+x)g.
\end{equation}

Since
\begin{equation*}
\left|\int_{0}^{1}(1-x)g\right|\leq\int_{0}^{1}(1-x^2)g=a_+,\ \left|\int_{-1}^{0}(1+x)g\right|\leq\int_{0}^{1}(1-x^2)g=a_-,
\end{equation*}
combining with \eqref{307}, we have
\begin{equation*}
    \left|\int_{0}^{1}g-\int_{-1}^{0}g \right|\le a.
\end{equation*}
Hence
\begin{equation}\label{308}
\frac{1-a}{2}\leq\int_{0}^{1}g,\int_{-1}^{0}g\leq \frac{1+a}{2}.
\end{equation}
Moreover, it follows directly from the definition of $g$ that
\begin{equation}\label{309}
    \int_{0}^{1}xg\le \min\{\int_{0}^{1}g,\int_{-1}^{0}g\}\le\frac{1}{2},
\end{equation}
and
\begin{equation}\label{N310}
    \int_0^1 (1+x)g=1-\int_{-1}^0 (1+x)g<1.
\end{equation}

With the estimates on $g$ above, the following Theorem gives a refined estimate on $A_{k}^{\pm}$, hence on $b_k$.
\begin{theorem}\label{bk}
Let $d=8$, $b=0.33$. Suppose $a\leq \frac{16}{\lambda_n}$ for some $n\geq 3$. Then for all $k$, we have
\begin{equation}
|A_{k}^{+}|\leq\mathcal{A}_{k}^{+}:=
\begin{cases}
a_+-\frac{1-b}{d}\lambda_k a_{+}^{2},\text{ if }\lambda_k\leq \frac{\lambda_n}{4},\\
ba_{+}+(1-b)\frac{d}{4\lambda_{k}},\text{ if }\frac{\lambda_n}{4}<\lambda_k\leq \lambda_n,
\end{cases}
\end{equation}
\begin{equation}\label{A-}
|A_{k}^{-}|\leq\mathcal{A}_{k}^{-}:=
\begin{cases}
a_--\frac{1-b}{d}\lambda_k a_{-}^{2},\text{ if }a_{-}\leq\frac{4}{\lambda_n},\\
ba_{-}+(1-b)\frac{d}{4\lambda_{k}}\chi_{\{\lambda\neq 1\}},\text{ if }\frac{4}{\lambda_n}< a_-\leq\frac{8}{\lambda_n}.
\end{cases}
\end{equation}
\end{theorem}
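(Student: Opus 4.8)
The plan is to estimate $A_k^{\pm}=\int_{I_\pm}(1-x^2)\widetilde F_k' g$ with $I_+=[0,1]$, $I_-=[-1,0]$, by playing off two competing bounds on $\widetilde F_k'$: the trivial global bound $|\widetilde F_k'|\le 1$ (since $\widetilde F_k'(1)=1$ and, after normalization, the Gegenbauer polynomial $C_{k-1}^{7/2}$ attains its maximum modulus at the endpoint), and a refined pointwise bound near $x=\pm 1$ capturing the decay of $\widetilde F_k'$ away from the endpoints. The first regime $\lambda_k\le \lambda_n/4$ is the ``low-frequency'' case: here I would write $\widetilde F_k'(x)=1-(1-\widetilde F_k'(x))$ and use a one-sided estimate of the form $1-\widetilde F_k'(x)\ge c\,\lambda_k(1-x)$ valid on $[0,1]$ for an explicit constant, which comes from the differential equation \eqref{DE} for $F_{k-1}^{7/2}$ (equivalently from $(\widetilde F_k')'(1)$ and convexity/monotonicity properties of Gegenbauer polynomials near the endpoint). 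Integrating against $(1-x^2)g\ge 0$ and using $\int_0^1(1-x)g\le\int_0^1(1-x^2)g=a_+$ together with a lower bound on $\int_0^1(1-x^2)(1-x)g$ in terms of $a_+^2$ — this is where the hypothesis $a\le 16/\lambda_n$ and $\lambda_k\le\lambda_n/4$ enter, to guarantee $\lambda_k a_+$ is small enough that the quadratic correction is not overwhelmed — yields $|A_k^+|\le a_+-\frac{1-b}{d}\lambda_k a_+^2$ with $d=8$, $b=0.33$. The numerology ($d=8$, $b=0.33$) should be exactly what is forced by the explicit constant $c$ in $1-\widetilde F_k'\ge c\lambda_k(1-x)$ and the Cauchy–Schwarz/Chebyshev step bounding $\int(1-x)\,\cdot$ below by $(\int 1\cdot)^2/\int\frac{1}{1-x}\cdot$ or a direct convexity argument on the measure $g$.

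For the ``high-frequency'' regime $\lambda_n/4<\lambda_k\le\lambda_n$, the decay estimate is the workhorse. I would invoke a pointwise bound of the shape $|\widetilde F_k'(x)|\le C\big(\lambda_k(1-x^2)\big)^{-s}$ for $x$ bounded away from $\pm1$ (with $s$ something like $2$, reflecting that $C_{k-1}^{7/2}$ has order $7/2$ so its normalized version decays like a power of $k\sin\theta$ in the oscillatory region $x=\cos\theta$), derivable from \eqref{hyper odd}–\eqref{hyper even} and standard asymptotics of $_2F_1$, or more elementarily from Bernstein/Markov-type inequalities combined with the orthogonality normalization. Split $I_+$ into a boundary layer $\{1-x\le \delta/\lambda_k\}$ and its complement: on the boundary layer use $|\widetilde F_k'|\le 1$ and $\int_{\{1-x\le\delta/\lambda_k\}}(1-x^2)g\le a_+$ but actually the sharper input $\int_0^1(1-x^2)g$ restricted there is $\le$ itself, so one gets a contribution $\le b\,a_+$ by choosing the layer so that the ``$1$''-bound only costs a fraction $b$; on the complement use the decay bound, so that $|\widetilde F_k'(x)|\le$ const and $\int(1-x^2)g\le$ the total mass bound, giving the $(1-b)\frac{d}{4\lambda_k}$ term (here one uses $\int_0^1 g\le\frac{1+a}{2}$ from \eqref{308} and the crude $\int(1-x^2)g\le a_+$, trading a factor of $\lambda_k^{-1}$ from the decay). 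The constant $d/4$ is then whatever the product of the decay constant $C$ and the total-mass bound produces. The estimate \eqref{A-} for $A_k^-$ on $I_-=[-1,0]$ is structurally identical but with the cutoff thresholds expressed directly in terms of $a_-$ rather than $\lambda_k$ (since on the negative side the relevant smallness is $\lambda_k a_-$, and $a_-\le a\le 16/\lambda_n$), and the extra factor $\chi_{\{\lambda\ne1\}}$ reflects that when $\lambda=1$ all the mass $g$ sits in $[0,1]$ so $a_-=0$ and $A_k^-=0$ trivially — one should treat the case $\lambda=1$ (equivalently Proposition \ref{lambda=1}) separately so that this indicator is legitimate, and otherwise the same two-regime split applies verbatim with $[-1,0]$ in place of $[0,1]$ and $a_-$ in place of $a_+$.

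The main obstacle I anticipate is pinning down the \emph{explicit} constants — establishing the one-sided endpoint inequality $1-\widetilde F_k'(x)\ge \frac{2(1-b)}{d}\lambda_k(1-x)+\text{(good correction)}$ on all of $[0,1]$ uniformly in $k$, and the decay bound with a constant clean enough that $b=0.33$, $d=8$ actually close the argument rather than merely being close. The endpoint inequality cannot be just the tangent-line bound from $(\widetilde F_k')'(1)$, because $\widetilde F_k'$ dips below that line; one needs a genuinely global lower bound, which I would get by analyzing $h(x):=\big(1-\widetilde F_k'(x)\big)/(1-x)$, showing via \eqref{DE} that it is monotone (or has a single interior critical structure) so its minimum on $[0,1]$ is controlled by its endpoint values $h(1^-)=\tfrac{\lambda_k(\lambda_k+6)}{6\cdot9}$-ish and $h(0)=1-\widetilde F_k'(0)$, the latter evaluated from \eqref{hyper odd}. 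Matching the two regimes at $\lambda_k=\lambda_n/4$ — i.e. checking $a_+-\frac{1-b}{d}\lambda_k a_+^2\le b a_+ + (1-b)\frac{d}{4\lambda_k}$ at the crossover, which is a one-variable inequality in $t:=\lambda_k a_+\le 4$ — is routine but must be verified to confirm $\mathcal A_k^{\pm}$ is a consistent upper bound across the whole range. I would relegate the verification of these scalar inequalities and the Gegenbauer pointwise estimates to the appendix, as the paper's structure already anticipates.
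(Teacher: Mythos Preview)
Your overall architecture—split the integral according to a boundary layer near $x=1$, use a sharper bound on $\widetilde F_k'$ in the layer and a crude one outside—is exactly right, but the specific inequalities you propose do not work, and the mechanism by which the two regimes arise is not the one you describe.

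First, the global one-sided bound $1-\widetilde F_k'(x)\ge c\,\lambda_k(1-x)$ on all of $[0,1]$ with $c$ independent of $k$ is \emph{false}. At $x=0$ the left side is $1-\widetilde F_k'(0)\le 1+m_0\approx 1.04$, while the right side is $c\lambda_k\to\infty$. So your proposed route to the low-frequency bound collapses, and the analysis of $h(x)=(1-\widetilde F_k'(x))/(1-x)$ you sketch cannot rescue it. What the paper actually proves (Lemma~\ref{ptwise}) is a single \emph{piecewise} upper bound
\[
\widetilde F_k'(x)\le\begin{cases} b, & 0\le x\le 1-\tfrac{d}{\lambda_k},\\[2pt] 1-\tfrac{\lambda_k}{d}(1-b)(1-x), & 1-\tfrac{d}{\lambda_k}\le x\le 1,\end{cases}
\]
so the linear bound is only claimed on the layer $II=[1-d/\lambda_k,1]$, and away from it one uses the flat cap $b$. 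Plugging this in and applying Cauchy--Schwarz in the form $\int_{II}(1-x^2)(1-x)g\ge \big(\int_{II}(1-x^2)g\big)^2/\int_{II}(1+x)g\ge a_{II}^2$ (by \eqref{N310}) gives
\[
A_k^+\le b a_+ +(1-b)\Big(a_{II}-\frac{\lambda_k}{d}a_{II}^2\Big).
\]
The two regimes in the statement are then \emph{not} produced by two different pointwise bounds, as you suggest, but simply by where $a_{II}$ sits on the parabola $t\mapsto t-\tfrac{\lambda_k}{d}t^2$: if $\lambda_k\le \lambda_n/4$ then $a_{II}\le a_+\le a\le 16/\lambda_n\le d/(2\lambda_k)$ is left of the vertex, so one evaluates at $a_{II}=a_+$; otherwise one takes the parabola's maximum $d/(4\lambda_k)$. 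Your separate ``decay bound'' $|\widetilde F_k'|\le C(\lambda_k(1-x^2))^{-s}$ is unnecessary and would make the constants much worse.

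Second, you never address the \emph{lower} bound on $A_k^+$, which is required since the theorem bounds $|A_k^+|$. This is Lemma~\ref{minFk}: $\widetilde F_k'\ge -m_0=-0.04$ on $[0,1]$, whence $A_k^+\ge -m_0 a_+$, and one checks $m_0 a_+$ is dominated by both expressions $\mathcal A_k^+$. Without this the absolute value is unhandled.
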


In fact, for the toy cases in which $k$'s are small, better estimates can be obtained. The proof is left to Appendix \ref{appendix A}.

\begin{lemma}\label{A2-5}
For $A_k$, $2\leq k\leq 5$,
\begin{eqnarray}
    |A_2|&\le& a_+-a_+^2,\label{303}\\
    |A_3|&\le& a-\frac{9}{4}\frac{a^2}{a+1}(2\lambda^2-2\lambda+1),\label{304}\\
    |A_4|&\le&  (a_+-a_+^2)-\frac{11}{4} (a_+-a_+^2)^2+\frac{1}{4\sqrt{11}}a_-,\label{305}\\
    |A_5|&\le& a-\frac{11(a_+^2+a_-^2)}{2(a+1)}+\frac{143(a_+^3+a_-^3)}{10(a+1)^2}.\label{306}
\end{eqnarray}
\end{lemma}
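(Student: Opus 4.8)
The plan is to estimate each $A_k$ directly by writing $A_k^{\pm}=\int (1-x^2)\widetilde F_k' g$ and using the explicit small-degree Gegenbauer polynomials together with the constraints on $g$ derived just before the statement, namely $\int g=1$, $\int xg=0$, $\int(1-x^2)g=a$, the one-sided mass bounds \eqref{308}, and \eqref{309}. The key observation is that for small $k$ the polynomial $\widetilde F_k'$ has a simple form, so $(1-x^2)\widetilde F_k'$ can be written as a low-degree polynomial in $x$ whose integral against $g$ is controlled by $a_{\pm}$ and the lower-order moments. First I would record the normalized derivatives: since $\widetilde F_k'(1)=1$ and $\widetilde F_k'\propto C_{k-1}^{7/2}$, one computes $\widetilde F_2'=x$, $\widetilde F_3'=\tfrac{1}{4}(9x^2-1)$ (degree $2$, even), $\widetilde F_4'$ a cubic (odd), $\widetilde F_5'$ a quartic (even), each normalized to value $1$ at $x=1$; I would just expand these from \eqref{derivative} and the defining formula for $C_k^\nu$.

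For \eqref{303}: on $[0,1]$ we have $(1-x^2)\widetilde F_2' g=(1-x^2)xg$, and since $0\le (1-x^2)x\le 1-x^2$ on $[0,1]$ with equality forcing a single interior critical structure, $|A_2|=|\int_0^1 (1-x)(1+x)xg|$; bounding $x\le 1$ loses too much, so instead I would write $(1-x^2)x=(1-x^2)-(1-x^2)(1-x)=(1-x^2)-(1-x)^2(1+x)$ and use $0\le (1-x)^2(1+x)$ together with $(1-x)^2(1+x)\ge c(1-x^2)^2$ type estimates; more cleanly, $\int_0^1(1-x^2)xg\le \int_0^1(1-x^2)g=a_+$ and $\int_0^1(1-x^2)xg\ge \int_0^1(1-x^2)g - \int_0^1(1-x^2)(1-x)g$, and $(1-x^2)(1-x)\le (1-x^2)\cdot\frac{(1-x^2)}{1+x}\le$ ... — the sharp route is to use that $(1-x)\le \frac{1-x^2}{1}$ only when... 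Rather, I expect the intended argument is: $(1-x^2)x \le (1-x^2) - (1-x^2)^2$ is false pointwise, so one instead uses $\int_0^1 (1-x^2)(1-x)g$ and the Cauchy–Schwarz/Jensen-type bound $\left(\int_0^1 (1-x^2)(1-x)g\right) \ge \frac{\left(\int_0^1 (1-x^2) g\right)^2}{\int_0^1 (1+x) g}\cdot(\text{something})$; more precisely $(1-x^2)(1-x) = \frac{((1-x^2))^2}{1+x}$ and by Cauchy–Schwarz $\int_0^1 \frac{h^2}{1+x}\,d\mu \ge \frac{(\int h\,d\mu)^2}{\int (1+x)\,d\mu}$ with $h=1-x^2$, $d\mu = g\,dx$, giving $\int_0^1(1-x^2)(1-x)g\ge \frac{a_+^2}{\int_0^1(1+x)g}\ge a_+^2$ using \eqref{N310}. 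Hence $|A_2|\le a_+ - a_+^2$, which is \eqref{303}.

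The remaining three inequalities follow the same template, which I would carry out as the main body of the proof: write $(1-x^2)\widetilde F_k'$ as a combination of $(1-x^2)$, $(1-x^2)x$, and higher terms, split off the "mass" part $a_\pm$, and bound each quadratic/cubic correction from below by a Cauchy–Schwarz argument of the form $\int \frac{h^2}{w}\,g \ge \frac{(\int h g)^2}{\int w g}$ with suitable weights $w$ (e.g. $w = 1\pm x$) combined with the normalization $\int g = 1$, the moment identities, and \eqref{308}. For \eqref{304} and \eqref{306} ($k$ odd, $\widetilde F_k'$ even) the natural decomposition uses both $a_+^2$ and $a_-^2$ and the denominator $a+1$ appears because $\int_{-1}^1(1\pm x)g \le 1 + a$ — actually $\int_{-1}^1 g = 1$ so $\int(1-x^2)g$-type denominators; I would track the exact constants $\tfrac94$, $\tfrac{11}{2}$, $\tfrac{143}{10}$ by matching the leading Taylor coefficients of $\widetilde F_k'$ at $x=1$ (these are $(\widetilde F_k')'(1)$ up to normalization). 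For \eqref{305} ($k=4$, $\widetilde F_4'$ odd) I would treat the $[0,1]$ part exactly as in the $k=2$ case to get $(a_+ - a_+^2) - \tfrac{11}{4}(a_+-a_+^2)^2$, and on $[-1,0]$ bound $|(1-x^2)\widetilde F_4'|$ by its sup-norm there, which is $\tfrac{1}{4\sqrt{11}}$ (the max of $|C_3^{7/2}|$ suitably normalized on $[-1,0]$), times $a_- = \int_{-1}^0(1-x^2)g$ — here I need $(1-x^2)|\widetilde F_4'|\le \frac{1}{4\sqrt{11}}$ for $x\in[-1,0]$, which is a one-variable calculus fact. The main obstacle is getting the sharp constants: the Cauchy–Schwarz lower bounds must be applied with exactly the right weight so that the resulting quadratic-in-$a_\pm$ term has the stated coefficient, and verifying the pointwise sup-norm bounds like $\frac{1}{4\sqrt{11}}$ for $\widetilde F_4'$ on $[-1,0]$ requires locating critical points of an explicit quartic — routine but error-prone. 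Once the decompositions and these elementary bounds are in place, each of \eqref{303}–\eqref{306} drops out, so the only real work is the bookkeeping of constants.
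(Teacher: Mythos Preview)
Your approach is essentially the paper's: explicit low-degree $\widetilde F_k'$, Cauchy--Schwarz in the form $\int \frac{h^2}{w}\,g \ge \frac{(\int h g)^2}{\int w g}$, and the half-line mass bounds \eqref{308}--\eqref{N310}; the paper organizes this via the moments $A_{m,n}^{\pm}=\int x^m(1-x^2)^n g$ but the substance is identical. One small correction: for the $A_4^-$ term you want $\max_{[-1,0]}\widetilde F_4' = -\min_{[0,1]}\widetilde F_4' = \tfrac{1}{4\sqrt{11}}$ (not the sup of $(1-x^2)\widetilde F_4'$), which then gets multiplied by $\int_{-1}^0(1-x^2)g=a_-$; and the denominator $a+1$ in \eqref{304}, \eqref{306} comes from $\int_0^1 g \le \frac{1+a}{2}$ in \eqref{308}, used as the weight $w\equiv 1$ in your Cauchy--Schwarz template applied to $\int_0^1(1-x^2)^2 g$.
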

Before we prove Theorem \ref{bk} for general $k$'s, we first introduce some point-wise estimates of Gegenbauer polynomials.

\begin{lemma}[Corollary 5.3 of Nemes and Olde Daalhuis \cite{Nemes2020} ]\label{lem33}
Let $0<\zeta<\pi$ and $N\geq 3$ be an  integer. Then
\begin{equation}\label{310}
C_{k-1}^{\frac{7}{2}}(\cos{\zeta})=\frac{2}{\Gamma(\frac{7}{2})(2\sin{\zeta})^\frac{7}{2}}\left(\sum_{n=0}^{N-1}t_n(3)\frac{\Gamma(k+6)}{\Gamma(k+n+\frac{7}{2})}\frac{\cos{(\delta_{k-1,n})}}{\sin^n{\zeta}}+R_N(\zeta,k-1)\right),
\end{equation}
where $\delta_{k,n}=(k+n+\frac{7}{2})\zeta-(\frac{7}{2}-n)\frac{\pi}{2}$, $t_n(\mu)=\frac{(\frac{1}{2}-\mu)_n(\frac{1}{2}+\mu)_n}{(-2)^n n!}$, and $(x)_n=\frac{\Gamma(x+n)}{\Gamma(x)}$ is the Pochhammer symbol. The remainder term $R$ satisfies the estimate
\begin{equation}\label{311}
|R_N(\zeta,k)|\leq |t_N(3)|\frac{\Gamma(k+6)}{\Gamma(k+N+\frac{7}{2})}\frac{1}{\sin^N{\zeta}}\cdot
\begin{cases}
|\sec{\zeta}|&\text{ if }0<\zeta\leq\frac{\pi}{4}\text{ or }\frac{3\pi}{4}\leq\zeta<\pi,\\
2\sin{\zeta}&\text{ if }\frac{\pi}{4}<\zeta<\frac{3\pi}{4}.
\end{cases}
\end{equation}
\end{lemma}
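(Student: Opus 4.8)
\textbf{Proof proposal for Lemma \ref{lem33} (the Nemes--Olde Daalhuis expansion).}
The plan is to reduce the stated expansion for the Gegenbauer polynomial $C_{k-1}^{7/2}(\cos\zeta)$ to the corresponding asymptotic expansion of the Jacobi polynomial $P_n^{(\mu-1/2,\mu-1/2)}$ with $\mu=7/2$, which is exactly the content of Corollary 5.3 in Nemes--Olde Daalhuis \cite{Nemes2020}. First I would recall the standard identity relating ultraspherical and Jacobi polynomials,
\begin{equation*}
C_m^{\mu}(x)=\frac{\Gamma(\mu+\tfrac12)\,\Gamma(m+2\mu)}{\Gamma(2\mu)\,\Gamma(m+\mu+\tfrac12)}\,P_m^{(\mu-\frac12,\mu-\frac12)}(x),
\end{equation*}
specialize to $\mu=\tfrac72$, $m=k-1$, $x=\cos\zeta$, and substitute the Nemes--Olde Daalhuis expansion of $P_m^{(\alpha,\alpha)}(\cos\zeta)$ in the oscillatory regime $0<\zeta<\pi$. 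Their expansion naturally produces a prefactor $(\sin\tfrac\zeta2\cos\tfrac\zeta2)^{-\alpha-1/2}=(\tfrac12\sin\zeta)^{-\alpha-1/2}$, a phase of the form $(m+\alpha+\tfrac12+n)\zeta$ shifted by a multiple of $\pi/2$, coefficients $t_n(\mu)$ built from Pochhammer symbols of $\tfrac12\pm\mu$, and ratios of Gamma functions $\Gamma(m+2\alpha+1)/\Gamma(m+\alpha+\tfrac12+n+1)$. The main bookkeeping step is to check that, after multiplying by the normalizing Gamma-quotient above and writing $\alpha=\mu-\tfrac12=3$, $m=k-1$, all these pieces collapse exactly to $\frac{2}{\Gamma(7/2)(2\sin\zeta)^{7/2}}$, $\cos(\delta_{k-1,n})$ with $\delta_{k,n}=(k+n+\tfrac72)\zeta-(\tfrac72-n)\tfrac\pi2$, $t_n(3)$, and $\Gamma(k+6)/\Gamma(k+n+\tfrac72)$ as stated; in particular one verifies the index shift $k-1\mapsto k$ hidden in $\delta_{k-1,n}$ versus the summand $\Gamma(k+6)/\Gamma(k+n+\tfrac72)$ is consistent.

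Second, for the remainder bound I would carry the explicit error estimate from \cite[Cor.~5.3]{Nemes2020} through the same substitution. Their bound for the Jacobi remainder has the shape $|t_N(\mu)|\,\frac{\Gamma(m+2\alpha+1)}{\Gamma(m+\alpha+N+\tfrac32)}\,(\tfrac12\sin\zeta)^{-N}$ times the piecewise factor $|\sec\zeta|$ for $\zeta$ near $0$ or $\pi$ and $2\sin\zeta$ for $\zeta$ in the middle range, which is a manifestation of the van der Corput / stationary-phase type estimate for the integral representation of $P_m^{(\alpha,\alpha)}$. Re-expressing with $\alpha=3$, $\mu=\tfrac72$, $m=k-1$ and absorbing the normalizing Gamma-quotient gives precisely \eqref{311} with $|R_N(\zeta,k)|\le|t_N(3)|\frac{\Gamma(k+6)}{\Gamma(k+N+\frac72)}\frac{1}{\sin^N\zeta}$ times the same piecewise factor. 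I would note that the hypothesis $N\ge 3$ guarantees $t_N(3)\ne 0$ (indeed $\tfrac12-3=-\tfrac52$ and $\tfrac12+3=\tfrac72$ are never nonpositive integers, so no Pochhammer factor vanishes) and that the expansion is a genuine finite expansion plus remainder, valid for every fixed $\zeta\in(0,\pi)$ rather than merely asymptotically.

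Alternatively, if one prefers a self-contained derivation rather than a citation-plus-translation, I would start from the Laplace--type integral
\begin{equation*}
C_{k-1}^{7/2}(\cos\zeta)=\frac{c_k}{\pi}\int_0^\pi \bigl(\cos\zeta+i\sin\zeta\cos\varphi\bigr)^{k-1}(\sin\varphi)^{6}\,d\varphi
\end{equation*}
with $c_k=\frac{\Gamma(k+6)}{\Gamma(7)\Gamma(k)}$, write $\cos\zeta+i\sin\zeta\cos\varphi=e^{i\psi(\varphi)}\rho(\varphi)$ and apply the method of stationary phase: the endpoints $\varphi=0,\pi$ give the two complex-conjugate leading oscillatory contributions with phases $\pm(k-1)\zeta$, successive integration by parts (or Watson's lemma at the endpoints) generates the $t_n(3)$ coefficients and the descending powers of $\sin\zeta$, and the uniform control of the $(N+1)$st integrated-by-parts term over $\varphi\in[0,\pi]$ yields the remainder bound, with the $|\sec\zeta|$ versus $2\sin\zeta$ dichotomy coming from whether the stationary-phase contribution at $\varphi=\pi/2$ is dominated by the endpoints (near $\zeta=0,\pi$) or competes with them (mid-range). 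The hard part in either route is the purely computational verification in the first paragraph that the battery of Gamma-function and Pochhammer factors reorganizes exactly into the clean form \eqref{310}--\eqref{311}: there is no conceptual obstacle, but the index shift $k-1$ versus $k$ and the precise constant $2/\Gamma(7/2)$ require care, and keeping the phase convention $(\tfrac72-n)\tfrac\pi2$ straight (it depends on the parity bookkeeping of $(-i)^{\alpha+1/2+n}$) is where sign errors most easily creep in. I would therefore present the first route, citing \cite{Nemes2020} for the Jacobi case and doing only the $\mu=7/2$ specialization explicitly.
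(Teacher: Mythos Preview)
The paper gives no proof of this lemma at all: it is stated verbatim as Corollary~5.3 of Nemes--Olde Daalhuis \cite{Nemes2020}, with only the specialization $\nu=\tfrac72$ (equivalently $\mu=3$) plugged in. Your proposal is therefore not so much a competing proof as an explicit verification that the general Jacobi/ultraspherical expansion in \cite{Nemes2020} indeed reduces, under the standard identity $C_m^{\mu}(x)=\frac{\Gamma(\mu+1/2)\Gamma(m+2\mu)}{\Gamma(2\mu)\Gamma(m+\mu+1/2)}P_m^{(\mu-1/2,\mu-1/2)}(x)$, to the displayed formulas \eqref{310}--\eqref{311}. That is a perfectly reasonable thing to write out, and your bookkeeping outline (prefactor, phase, Gamma ratios, piecewise error factor) is correct.

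One small comment: your remark that ``$N\ge 3$ guarantees $t_N(3)\ne 0$'' is true but beside the point, since $(-\tfrac52)_n$ and $(\tfrac72)_n$ never vanish for any $n\ge 0$; the restriction $N\ge 3$ in the statement is simply inherited from the range of validity of the remainder estimate in \cite{Nemes2020}, not from any vanishing issue. Your alternative self-contained route via the Laplace integral and stationary phase is also valid, but since the paper is content to cite the result, the first route (cite and specialize) is the appropriate one here.
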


Using the pointwise estimate (\ref{310}),
we can prove the following lower and upper bounds for $\Tilde{F}_k'$. Recall that $\Tilde{F}_k'$ is odd for $k$ even and even for $k$ odd. It suffices to estimate $\Tilde{F}_k'$ on $[0,1]$. The proofs are left to  Appendix \ref{appendix B}.

\begin{lemma}\label{minFk}
Let $m_0=0.04$, then for all $k\ge 8$, we have
\begin{eqnarray*}
\widetilde{F}_k'\ge -m_0,\quad 0\le x\le 1.
\end{eqnarray*}
\end{lemma}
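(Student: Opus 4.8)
\textbf{Proof proposal for Lemma \ref{minFk}.}

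The plan is to insert the Nemes--Olde Daalhuis expansion (Lemma \ref{lem33}) for $C_{k-1}^{7/2}(\cos\zeta)$ into the identity $\widetilde F_k'(x) = \frac{720}{\lambda_k(\lambda_k+4)(\lambda_k+6)}C_{k-1}^{7/2}(x)$ and obtain a lower bound valid on all of $[0,1]$, splitting into two regimes: a ``bulk'' region bounded away from $x=1$ (say $\zeta \in [\zeta_0,\pi/2]$ for a suitable fixed $\zeta_0$, where $x=\cos\zeta$) and a ``boundary layer'' region near $x=1$ (i.e. $\zeta\in(0,\zeta_0]$). In the bulk region I would take $N=3$ (or $N=4$) terms in \eqref{310}: the leading term is $\frac{2}{\Gamma(7/2)(2\sin\zeta)^{7/2}}\cdot\frac{\Gamma(k+6)}{\Gamma(k+7/2)}\cos(\delta_{k-1,0})$, and after multiplying by the prefactor $\frac{720}{\lambda_k(\lambda_k+4)(\lambda_k+6)}$ and using $\frac{\Gamma(k+6)}{\Gamma(k+7/2)}\sim k^{5/2}$ while $\lambda_k(\lambda_k+4)(\lambda_k+6)\sim k^6$, the whole expression is $O(k^{-7/2}(\sin\zeta)^{-7/2})$ with an explicitly computable constant. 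Since $\sin\zeta\ge\sin\zeta_0$ is bounded below on the bulk region, for $k$ large enough this is smaller in absolute value than $m_0=0.04$; one then checks the finitely many remaining small $k$ (down to $k=8$) by hand or by the cruder bound $|\widetilde F_k'|\le 1$ combined with a slightly larger $\zeta_0$, or by direct numerical evaluation of the relevant polynomials.

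The genuine work is the boundary layer $\zeta\in(0,\zeta_0]$, where $\sin\zeta$ is small and the asymptotic series degrades; there the hypergeometric representation \eqref{hyper odd}--\eqref{hyper even} is the right tool. Writing $s=\sin^2\zeta = 1-x^2$, one has (for $k$ odd) $\widetilde F_k'$ proportional to ${_2}F_1(-m,m+\tfrac{9}{2};\tfrac{9}{2};s)$-type expressions where $C_{k-1}^{7/2}$ corresponds to $\nu=\tfrac72$, $k-1 = 2m$ or $2m+1$; near $s=0$ this series has all terms of one sign or is dominated by its first few terms, so $\widetilde F_k'(x)\ge \widetilde F_k'(1) - (\text{small}) = 1 - o(1) \gg -m_0$ once $s\le s_0$ for a suitable threshold $s_0$ (equivalently $\zeta\le\zeta_0$). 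Concretely I would show that on this layer $\widetilde F_k'$ stays close to $1$, so the lower bound $-m_0$ is satisfied with room to spare; the choice of $\zeta_0$ must be made compatibly with the bulk estimate so the two regions cover $[0,1]$.

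The main obstacle is the \emph{intermediate} range of $\zeta$ — bounded away from $0$ but not so large that the $N=3$ remainder in \eqref{311} is negligibly small — where neither the leading asymptotic term alone nor the crude bound $|\widetilde F_k'|\le 1$ suffices. Here I expect to need to retain two or three terms of the expansion \eqref{310}, bound $R_N$ using the second branch of \eqref{311} ($|R_N|\le |t_N(3)|\frac{\Gamma(k+6)}{\Gamma(k+N+7/2)}\frac{2\sin\zeta}{\sin^N\zeta}$ for $\pi/4<\zeta<3\pi/4$), and track constants carefully, since $m_0=0.04$ is a fairly tight target. A secondary nuisance is that the gamma-ratio $\frac{\Gamma(k+6)}{\Gamma(k+n+7/2)}$ and the prefactor must be estimated with explicit, monotone-in-$k$ bounds (e.g. via $\frac{\Gamma(k+6)}{\Gamma(k+7/2)}\le (k+5)^{5/2}$ and $\lambda_k(\lambda_k+4)(\lambda_k+6)\ge k^6$) so that verifying the inequality for all $k\ge 8$ reduces to checking it at $k=8$ plus a clean monotonicity argument; the finitely many borderline $k$ near $8$ I would simply verify numerically.
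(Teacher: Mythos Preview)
Your plan names the right tools (Lemma~\ref{lem33} and the hypergeometric representations \eqref{hyper odd}--\eqref{hyper even}), but the splitting at a \emph{fixed} threshold $\zeta_0$ independent of $k$ cannot work, and this is a genuine gap. The global minimum of $\widetilde F_k'$ on $[0,1]$ is attained at its rightmost interior critical point $x_{k-2,1}(\tfrac{9}{2})$ (the largest zero of $(\widetilde F_k')'\propto C_{k-2}^{9/2}$; see Lemma~\ref{localmax}), and by the zero bound of Lemma~\ref{xn1} this point satisfies $1-x_{k-2,1}(\tfrac{9}{2})\asymp k^{-2}$, i.e.\ $\zeta_{\min}\asymp k^{-1}$. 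Hence for any fixed $\zeta_0>0$ and all large $k$ the minimizer lies inside your ``boundary layer'' $(0,\zeta_0]$, and its value there is \emph{not} close to $1$---it is precisely the quantity one is trying to bound below by $-0.04$. Your hypergeometric argument that $\widetilde F_k'$ remains near $1$ for $s=\sin^2\zeta\le s_0$ is valid only when the effective expansion parameter $m^2 s\sim k^2 s$ is small, i.e.\ when $s_0\lesssim k^{-2}$; with $s_0$ fixed the first few terms of the ${_2}F_1$ series do not dominate as $k\to\infty$. Conversely, on your bulk region $[\zeta_0,\pi/2]$ the asymptotic indeed gives an $O(k^{-7/2})$ bound, which is fine---but that region simply does not contain the minimizer. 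Your ``intermediate range'' worry is therefore misdirected: the difficult zone is not $\zeta$ bounded away from $0$, but $\zeta\sim k^{-1}$.

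The paper closes this gap by first \emph{localizing} the minimum. Using Lemma~\ref{localmax} and Lemma~\ref{xn1} one shows the minimum of $\widetilde F_k'$ on $[0,1]$ occurs where $x<1-12.5/k^2$, equivalently $\sin\zeta\ge 5/k$; in the scaled variable $l:=k\sin\zeta$ one therefore only needs $l\ge 5$. Lemma~\ref{lem33} with $N=4$ is then applied in this regime: the four-term main sum and the remainder are estimated explicitly as functions of $l$ (via elementary bounds on $\cos\delta_{k-1,m}$ and on $\zeta-\sin\zeta$), and the ranges $5\le l\le 6.5$ and $l>6.5$ are treated separately to conclude $\widetilde F_k'\ge -0.04$. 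The region $l<5$ never has to be touched, because the minimum cannot lie there; this localization step is the idea your proposal is missing. Small $k$ (the paper takes $8\le k\le 200$) are verified numerically, as you also propose.
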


\begin{lemma}\label{ptwise}
Let $d=8$ and $b=0.33$. Then for all $k\ge 6$,
\begin{eqnarray*}
\widetilde{F}_k^{'}\le\left\{\begin{aligned} &b,\quad &0\le x\le1-\frac{d}{\lambda_k},\\
&1-\frac{\lambda_k}{d}(1-b)(1-x), \quad &1-\frac{d}{\lambda_k}\le x \le 1.\end{aligned}\right.
\end{eqnarray*}
\end{lemma}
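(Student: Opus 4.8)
The plan is to use the asymptotic expansion in Lemma \ref{lem33} applied to $\widetilde{F}_k'$, recalling that
$$\widetilde{F}_k'(\cos\zeta)=\frac{720}{\lambda_k(\lambda_k+4)(\lambda_k+6)}C_{k-1}^{7/2}(\cos\zeta),$$
and split the interval $[0,1]$ into a "bulk" region where $\zeta$ is bounded away from $0$ and a "boundary layer" region $1-\frac{d}{\lambda_k}\le x\le 1$, i.e. $\zeta=O(\lambda_k^{-1/2})$. In the bulk region, I would insert the expansion \eqref{310} with a small number of terms $N$; the leading term is of order $(\sin\zeta)^{-7/2}\cdot\frac{\Gamma(k+6)}{\Gamma(k+7/2)}$, which combined with the prefactor $\lambda_k^{-3}\sim k^{-6}$ gives a bound of order $k^{-1/2}(\sin\zeta)^{-7/2}$; this is $\le b=0.33$ as soon as $\sin\zeta$ is not too small, and one checks that $x\le 1-\frac{d}{\lambda_k}$ with $d=8$ forces $\sin^2\zeta\ge \frac{d}{\lambda_k}(2-\frac{d}{\lambda_k})\gtrsim \frac{16}{\lambda_k}$, i.e. $(\sin\zeta)^{-7/2}\lesssim \lambda_k^{7/4}/16^{7/4}$, so the product is uniformly bounded and in fact one must verify the constant is below $0.33$ for all $k\ge 6$ (the finitely many small $k$ near the threshold checked by hand, the rest by monotonicity of the resulting elementary expression in $k$). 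The remainder term $R_N$ is controlled by \eqref{311}, and since $\frac{\Gamma(k+6)}{\Gamma(k+N+7/2)}$ decays one extra power in $k$ per term, a fixed $N$ (likely $N=3$ or $4$) suffices.

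For the boundary layer $1-\frac{d}{\lambda_k}\le x\le1$, the asymptotic expansion degenerates ($\sin\zeta\to 0$), so instead I would argue directly from properties of $\widetilde{F}_k'$ near $x=1$. We know $\widetilde{F}_k'(1)=1$. The claimed bound $\widetilde{F}_k'(x)\le 1-\frac{\lambda_k}{d}(1-b)(1-x)$ is a statement that the graph of $\widetilde{F}_k'$ stays below the chord (or tangent-type line) from the value $1$ at $x=1$ with slope at least $\frac{\lambda_k}{d}(1-b)$. The natural tool is the differential equation: $\widetilde{F}_k'$ is a constant multiple of $C_{k-1}^{7/2}=C_{k-1}^{7/2}$, which satisfies a Gegenbauer ODE of order $\nu=7/2$, namely $(1-x^2)y''-(2\nu+1)xy'+(k-1)(k-1+2\nu)y=0$. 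Evaluating derivatives at $x=1$ gives $\big(\widetilde{F}_k'\big)'(1)=\frac{(k-1)(k+6)}{8}=\frac{\lambda_k-6+\ldots}{8}$ (the exact value of $(F_k')'(1)/F_k'(1)$ computed from \eqref{derivative} iterated, or from the known formula $C_{k-1}^{7/2\,\prime}(1)/C_{k-1}^{7/2}(1)=\frac{(k-1)(k+6)}{2\cdot 7/2}$), so the slope at the endpoint is $\approx\frac{\lambda_k}{8}$, comfortably above $\frac{\lambda_k}{8}(1-b)=\frac{0.67\,\lambda_k}{8}$. I would then show the inequality propagates into the whole layer: either by a convexity/concavity argument (showing $\widetilde{F}_k'$ is concave on the layer, perhaps via the sign of the second derivative extracted from the ODE, since there $(1-x^2)>0$ and the zeroth-order term dominates favorably), or by a Gronwall-type comparison on $w(x)=1-\widetilde{F}_k'(x)$ showing $w(x)\ge \frac{\lambda_k}{d}(1-b)(1-x)$ throughout $[1-\frac{d}{\lambda_k},1]$. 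One also uses that $\widetilde{F}_k'$ has no zeros with the wrong sign in this short layer — which follows because the first zero of $C_{k-1}^{7/2}$ nearest $1$ corresponds to $\zeta\sim j/k$ for the first Bessel-type zero, i.e. $1-x\sim c/\lambda_k$ with $c$ larger than $d=8$ once we double-check the numerics, so on $[1-\frac{8}{\lambda_k},1]$ the polynomial stays positive and the linear bound is meaningful.

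The main obstacle I expect is making the bulk estimate fully quantitative and uniform in $k$: the constant $b=0.33$ is not much below the natural asymptotic size of the leading term at the matching point $x=1-\frac{d}{\lambda_k}$ (where the two regimes must agree: the boundary-layer bound gives exactly $b$ there, so the bulk bound must also be $\le b$ right at that point, with no slack). This forces a careful accounting of the subleading terms in \eqref{310} and a verification that for all $k\ge 6$ the finite combination of Gamma-ratio factors and powers of $\sin\zeta$ genuinely stays under $0.33$ — a handful of the smallest $k$ checked numerically, and a monotonicity argument (the relevant expression, after pulling out $\lambda_k^{-3}\Gamma(k+6)/\Gamma(k+7/2)\sim k^{-1/2}$, is decreasing in $k$) covering the rest. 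The cross-check that $\zeta\mapsto(\sin\zeta)^{-7/2}$ composed with the constraint $x\le 1-d/\lambda_k$ yields the right power of $\lambda_k$ to cancel, and nothing worse, is the crux; everything else is bookkeeping with the ODE and known endpoint values of Gegenbauer polynomials.
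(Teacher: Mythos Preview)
Your boundary-layer argument is essentially right in spirit and matches the paper: one shows $\widetilde F_k'$ is \emph{convex} (not concave---all derivatives of $C_{k-1}^{7/2}$ are positive on $(x_{k-1,1}(\tfrac72),1)$, this is Lemma~\ref{localmax}(c)), so the graph lies below the chord joining $(1-\tfrac{d}{\lambda_k},\widetilde F_k'(1-\tfrac{d}{\lambda_k}))$ to $(1,1)$, and once $\widetilde F_k'(1-\tfrac{d}{\lambda_k})\le b$ is known the linear bound follows.

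The gap is in your bulk argument. If you bound each term of the asymptotic expansion~\eqref{310} by its absolute value (replacing $|\cos\delta_{k-1,m}|$ by $1$), the resulting upper bound is far above $0.33$ near the matching point $x=1-\tfrac{8}{\lambda_k}$. Indeed, with $\sin\zeta\approx 4/k$ there, the $m$-th term has size $48\sqrt{2/\pi}\,t_m(3)\,4^{-m-7/2}$, so the $m=0$ and $m=1$ terms are each $\approx 0.30$ and $0.33$ (their ratio is $t_1(3)/4=35/32$), and the $m=2$ term contributes another $\approx 0.14$; the triangle inequality gives $\gtrsim 0.7$, not $0.33$. A phase-tracked version could in principle be pushed through, but that is not what you proposed, and it would have to handle the entire range of $\zeta$, not just a single value.

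The paper avoids this entirely. It first proves the single-point estimate $0.3\le\widetilde F_k'(1-\tfrac{8}{\lambda_k})\le 0.33$ using the terminating hypergeometric representation~\eqref{hyper odd}--\eqref{hyper even} (an alternating series whose partial sums bracket the true value), not the large-$k$ asymptotics. Then it invokes the structural fact that the successive local extrema of $|\widetilde F_k'|$ decrease in absolute value as one moves from $x=1$ toward the interior (Lemma~\ref{localmax}(b)). Combined with Lemma~\ref{minFk}, which gives $\widetilde F_k'\ge -0.04$, this says every interior local maximum of $\widetilde F_k'$ on $[0,1)$ is at most $0.04<0.3\le\widetilde F_k'(1-\tfrac{8}{\lambda_k})$, so the value at the matching point dominates $\widetilde F_k'$ on all of $[0,1-\tfrac{8}{\lambda_k}]$. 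This monotonicity-of-extrema step is the key idea you are missing; it replaces any attempt to bound the oscillatory expansion uniformly in $\zeta$.
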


With the help of the above two lemmas, we are able to derive Theoreo \ref{bk}.
\begin{proof}[Proof of Theorem \ref{bk}]
By \eqref{beta} below, we have $\beta\geq \frac{113}{88}$, $\alpha<0.578$ and hence $a\leq 0.221$. It is straightforward to check the cases when $2\leq k\leq 5$ hold for better estimate in the form of Lemma \ref{ptwise}. In the following argument, we may assume $k\geq 6$.
Define $I=(0,1-\frac{d}{\lambda_k})$, $II=(1-\frac{d}{\lambda_k},1)$, and $a_I=\int_I (1-x^2) g$, $a_{II}=\int_{II} (1-x^2) g$. Then by Lemma \ref{ptwise} and \eqref{N310}, we have
\begin{align}
    \int_0^1 (1-x^2)\widetilde{F}_k'g&=\int_I (1-x^2)\widetilde{F}_k'g+\int_{II} (1-x^2)\widetilde{F}_k'g\nonumber\\
    &\le \int_I (1-x^2)b g+\int_{II} (1-x^2)(1-\frac{\lambda_k}{d}(1-b)(1-x))g\nonumber\\
    &=ba_I+a_{II}-\frac{\lambda_k}{d}(1-b)\int_{II} (1-x^2)(1-x)g\nonumber\\
    &\le ba_I+a_{II}-\frac{\lambda_k}{d}(1-b)\frac{(\int_{II} (1-x^2)g)^2}{\int_{II} (1+x)g}\nonumber\\
    &\le ba_I+a_{II}-\frac{\lambda_k}{d}(1-b)a_{II}^2\nonumber\\
    &=ba_++(1-b)(a_{II}-\frac{\lambda_k}{d}a_{II}^2)\label{parabola}.
\end{align}

If $\lambda_k\leq \frac{\lambda_n}{4}$, we have $a_{II}\leq a_+\leq a\leq \frac{16}{\lambda_n}\leq \frac{d}{2\lambda_k}$. Hence,
\begin{equation*}
    \int_0^1 (1-x^2)\widetilde{F}_k'g\le a_++(1-b)(a_+-\frac{\lambda_k}{d}a_+^2)=a_+-\frac{\lambda_k}{d}(1-b)a_+^2.
\end{equation*}

For the case when $\lambda_k>\frac{\lambda_n}{4}$, we get directly
\begin{equation*}
\int_0^1 (1-x^2)\widetilde{F}_k'g\leq ba_++(1-b)\frac{d}{4\lambda_k}.
\end{equation*}

On the other hand, Lemma \ref{minFk} yields
\begin{equation*}
    \int_0^1 (1-x^2)\widetilde{F}_k'g\ge -0.04\int_0^1 (1-x^2)g=-0.04a_+.
\end{equation*}

Combining the above three estimates, we obtain the desired estimate on $A_{k}^{+}$.

Similarly, on estimating $A_{k}^{-}$, just note that $a_{-}\leq \frac{a}{2}\leq \frac{8}{\lambda_n}$. We can get an estimate analogous to \eqref{parabola} and then \eqref{A-} follows directly. We omit the details.
\end{proof}

Next we derive a uniform estimate of cancellation of consecutive Gegenbauer polynomials. The estimate is based on the recursion formula and a useful inequality of Gegenbauer polynomials. It is well known that for $0<\nu< 1$, $-1\le x\le 1$, one has
\begin{equation}\label{0 to 1}
    (1-x^2)^\frac{\nu}{2}|C_n^\nu(x)|<\frac{2^{1-\nu}}{\Gamma(\nu)}n^{\nu-1},
\end{equation}
where the constant $\frac{2^{1-\nu}}{\Gamma(\nu)}$ is optimal. (See Theorem 7.33.2 in \cite{Szego1975}). We believe that an analogous result of \eqref{0 to 1} exists for $\nu>1$, but now the following lemma, whose proof is left to Appendix \ref{appendix cancel}, is enough for our use. We will use $F_n^\nu$ instead of $C_n^\nu$  for the sake of notational consistency.
\begin{lemma}\label{nu>1}
For $\nu\ge 2$ and $-1\le x\le 1$, if $n\ge \max\{2\nu+2, 12\}$, then we have
   \begin{equation}\label{nnu>1}
    |(1-x^2){F}_n^\nu(x)|\le  \frac{\widetilde C_\nu}{n(n+2\nu)},
\end{equation}
where $\widetilde C_\nu$ is given in\eqref{Cnu}.
\end{lemma}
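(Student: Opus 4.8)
The plan is to reduce the bound on $(1-x^2)F_n^\nu(x)$ to the known sharp estimate \eqref{0 to 1}, which is only available for the index range $0<\nu<1$, by repeatedly lowering the order via a recursion. Recall that Gegenbauer polynomials satisfy a contiguous relation expressing $C_n^{\nu+1}$ in terms of $C_n^\nu$ and $C_{n-2}^{\nu+1}$ (equivalently, a relation of the form $2\nu(1-x^2)C_{n-2}^{\nu+1} = (2\nu+n-1)C_{n-2}^\nu - (n)\,x\,C_{n-1}^\nu$, or the cleaner $\frac{d}{dx}\big[(1-x^2)^{\nu}C_n^{\nu}\big]$-type identity). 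Translating to the normalized $F_n^\nu$, one obtains an identity that writes $(1-x^2)F_n^\nu$ as a fixed linear combination, with $x$- and $n$-dependent but explicitly bounded coefficients, of $F_{n}^{\nu-1}$-type and $F_{n-2}^{\nu-1}$-type terms (or $F_{n}^{\nu-1}$ and $F_{n-1}^{\nu-1}$). Iterating this $\lceil \nu \rceil - 1$ or so times brings the order down into the interval $(0,1)$ where \eqref{0 to 1} applies; since $\nu$ is a fixed parameter (and in our application $\nu=\frac72$ or nearby small values), the number of iterations is bounded and each step contributes an explicit constant depending only on $\nu$, which is what gets packaged into $\widetilde C_\nu$ in \eqref{Cnu}.

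Concretely, the key steps in order are: (i) write down the precise recursion lowering $\nu$ to $\nu-1$ for the normalized polynomials $F_n^\nu$, keeping careful track of the normalization factors $\frac{n!\Gamma(2\nu)}{\Gamma(n+2\nu)}$ from \eqref{Fknu}; (ii) estimate the coefficients appearing in that recursion on $[-1,1]$, using $|x|\le 1$ and the hypothesis $n\ge 2\nu+2$ to keep all denominators of the form $n+2\nu-j$ comfortably positive and comparable to $n$; (iii) iterate down to an order $\nu_0\in(0,1)$, collecting the product of per-step constants into $\widetilde C_\nu$; (iv) at the base level, multiply through by $(1-x^2)^{\nu_0/2}$ and invoke \eqref{0 to 1} to get a bound of the shape $C\, n^{\nu_0-1}$ for $(1-x^2)^{\nu_0/2}|F_{n'}^{\nu_0}|$ for the finitely many shifted indices $n'\in\{n,n-1,\dots\}$ that occur; (v) re-assemble: the extra powers of $(1-x^2)$ present in $(1-x^2)F_n^\nu$ versus $(1-x^2)^{\nu_0/2}F_{n'}^{\nu_0}$ are bounded by $1$ on $[-1,1]$, and combining the $n^{\nu_0-1}$ decay from step (iv) with the $1/(n+2\nu)$-type factors picked up along the recursion yields the claimed $\widetilde C_\nu / \big(n(n+2\nu)\big)$; the hypotheses $n\ge 12$ and $n\ge 2\nu+2$ are used to absorb lower-order error terms and to replace quantities like $n-j$ by a fixed fraction of $n$.

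The main obstacle I anticipate is bookkeeping rather than conceptual: getting the recursion in exactly the right normalized form and tracking how the normalization constants and the coefficient bounds multiply over the $O(\nu)$ iterations, so that the final constant $\widetilde C_\nu$ is genuinely explicit and finite, and so that the decay rate comes out as $n^{-2}$ (up to the $n+2\nu$ factor) rather than something weaker. One has to be careful that the two terms produced at each recursion step ($F_n^{\nu-1}$ and $F_{n-2}^{\nu-1}$, say) both decay at the same rate so the error does not accumulate across iterations; the assumption $n\ge \max\{2\nu+2,12\}$ is precisely what guarantees all the shifted indices stay in the regime where the sub-estimates are valid and the geometric-type loss per step is controlled. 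A secondary subtlety is the base case: \eqref{0 to 1} gives $n^{\nu_0-1}$ with $\nu_0-1<0$, so one should choose $\nu_0$ (how many steps to descend) so that this decay, together with the algebraic factors harvested en route, is at least $n^{-2}$; since each lowering step gains roughly a factor $1/n$, descending from $\nu$ to just below $1$ already overshoots, and one simply discards the surplus decay.
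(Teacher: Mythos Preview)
Your approach is fundamentally different from the paper's, and it has a genuine gap. The paper does not reduce to \eqref{0 to 1} by lowering $\nu$. Instead it first proves a Sonin-type monotonicity statement (Lemma \ref{increase local max}): writing $v(\theta)=(\sin\theta)^2F_n^\nu(\cos\theta)$, the successive local maxima of $|v|$ increase as $\theta$ decreases on $(0,\pi/2]$, so the global maximum of $|(1-x^2)F_n^\nu|$ is attained at the extremum $\theta_*$ closest to $0$. It then localizes $\theta_*$ between explicit angles $\underline\theta<\theta_*<\overline\theta$ with $\sin^2\overline\theta=\frac{4\nu+2}{n(n+2\nu)}$, by sign-checking $v'$ via the hypergeometric expansions \eqref{hyper odd}--\eqref{hyper even}, and concludes $|v(\theta_*)|\le \sin^2\overline\theta\cdot F_n^\nu(\cos\underline\theta)$. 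The factor $\sin^2\overline\theta$ already supplies the full $\frac{1}{n(n+2\nu)}$, and $F_n^\nu(\cos\underline\theta)$ is bounded by an explicit truncated hypergeometric sum, which is what produces the constant $\widetilde C_\nu$ in \eqref{Cnu}.

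The gap in your plan is the claim that each lowering step gains roughly a factor $1/n$ and that you can iterate. The order-lowering identity in normalized form is $(1-x^2)(n+2\nu-1)F_n^{\nu}=-xF_{n+1}^{\nu-1}+F_n^{\nu-1}$, and it \emph{consumes} the factor $(1-x^2)$. Starting from $(1-x^2)F_n^\nu$ you can therefore apply it exactly once; after that you are left with undecorated terms $F_{n'}^{\nu-1}$, for which the bound $|F_{n'}^{\nu-1}|\le 1$ is sharp at $x=\pm1$, so the output is only $O(1/n)$. There is no further $(1-x^2)$ available to iterate, and for $\nu\ge2$ one has not yet reached the range $0<\nu<1$ where \eqref{0 to 1} applies. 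To recover the missing $1/n$ you would need to extract the cancellation in $-xF_{n+1}^{\nu-1}+F_n^{\nu-1}$ near $x=1$; but quantifying that cancellation uniformly is exactly the content of Proposition \ref{cancel}, whose proof in the paper \emph{uses} Lemma \ref{nu>1} as input, so this route is circular. In short, step (iii) of your outline cannot be carried out with only one power of $(1-x^2)$ to spend, and the decay you actually obtain is $1/n$, not $1/(n(n+2\nu))$.
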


With the help of the above lemma, we can prove the following proposition.
\begin{proposition}\label{cancel}
Let $c_n^\nu=\max\limits_{0\le x\le 1}|{F}_{n+1}^\nu(x)-{F}_{n}^\nu(x)|$. For $\nu\ge 2$, we have
\begin{align*}
    c_n^\nu\le\frac{1}{n}\left(\frac{\widetilde C_{\nu}}{n+2\nu+1}+\widetilde C_{\nu+1}\right)
\end{align*}
if $n\ge \max\{2\nu+2, 12\}$.
\end{proposition}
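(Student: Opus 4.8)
The plan is to control the consecutive difference $F_{n+1}^\nu - F_n^\nu$ by relating it to the derivative $(F_n^\nu)'$, and then to invoke Lemma \ref{nu>1} applied at order $\nu+1$. The key algebraic input is a three-term recursion (or equivalently a contiguous relation) for the normalized Gegenbauer polynomials $F_n^\nu$ that expresses $F_{n+1}^\nu - F_n^\nu$ in terms of $(1-x^2)$ times a Gegenbauer polynomial of order $\nu+1$. Concretely, recall from \eqref{derivative} that $(F_n^\nu)' = \frac{n(n+2\nu)}{2\nu+1} F_{n-1}^{\nu+1}$. Using the standard differential/recursion identities for Gegenbauer polynomials, one can write a difference of two consecutive $F^\nu$'s as a multiple of $(1-x^2) F_m^{\nu+1}$ for an appropriate index $m$ close to $n$; the factor $(1-x^2)$ appears precisely because $F_{n+1}^\nu$ and $F_n^\nu$ agree to first order at $x = \pm 1$ (both equal $1$ at $x=1$), so their difference vanishes there and carries the weight.

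First I would establish the exact identity. Starting from the contiguous relation linking $C_{n+1}^\nu$, $C_n^\nu$, and $C_n^{\nu+1}$ — something of the form $(1-x^2) C_{n-1}^{\nu+1}(x) = \frac{1}{2\nu}\big( (\text{const})\, C_{n-1}^\nu(x) - (\text{const})\, C_{n}^\nu(x)\big)$ after normalization — and translating it into the $F$-normalization via \eqref{Fknu}, I would arrive at an expression
\[
F_{n+1}^\nu(x) - F_n^\nu(x) = c_{n,\nu}\,(1-x^2)\, F_{m}^{\nu+1}(x)
\]
for some explicit rational constant $c_{n,\nu}$ of size $O(1/n)$ and some $m \in \{n-1, n\}$. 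Then I would bound the right-hand side: for $x \in [0,1]$, Lemma \ref{nu>1} at order $\nu+1$ gives $|(1-x^2) F_m^{\nu+1}(x)| \le \widetilde C_{\nu+1}/(m(m+2\nu+2))$, valid once $m \ge \max\{2\nu+4, 12\}$. Multiplying by $|c_{n,\nu}|$ and simplifying the products of indices should produce the claimed bound, with the two terms $\frac{\widetilde C_\nu}{n+2\nu+1}$ and $\widetilde C_{\nu+1}$ inside the parentheses arising from combining this estimate with a direct application of Lemma \ref{nu>1} at order $\nu$ for one of the pieces (the recursion typically splits into a main term handled by the order-$(\nu+1)$ estimate and a lower-order correction controlled by the order-$\nu$ estimate).

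I would organize the argument as: (1) derive the contiguous/recursion identity in $F$-normalization; (2) check that the index hypothesis $n \ge \max\{2\nu+2,12\}$ ensures all invocations of Lemma \ref{nu>1} are legitimate (the shifted indices $m$, $n-1$, and the order $\nu+1$ all satisfy their respective thresholds); (3) assemble the bounds and verify the constants line up with the stated $\frac{1}{n}\big(\frac{\widetilde C_\nu}{n+2\nu+1} + \widetilde C_{\nu+1}\big)$. The main obstacle I anticipate is bookkeeping the exact normalization constants: the jump between the $C_n^\nu$ recursion coefficients and the $F_n^\nu$ ones involves ratios of Gamma functions $\frac{n!\,\Gamma(2\nu)}{\Gamma(n+2\nu)}$ that must be tracked carefully so that the final constant is genuinely $\le \frac{1}{n}\big(\frac{\widetilde C_\nu}{n+2\nu+1} + \widetilde C_{\nu+1}\big)$ rather than merely $O(1/n^2)$ with an unspecified constant. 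A secondary subtlety is that the relevant contiguous identity may naturally involve $F_{m}^{\nu+1}$ at an index $m$ for which one must also use the elementary bound $|F_m^{\nu+1}| \le 1$ on a small sub-interval near $x=1$ where the $(1-x^2)$ decay is not yet doing enough work; but since we only need the maximum over $[0,1]$ and the $(1-x^2)$ factor is genuinely present, Lemma \ref{nu>1} should cover the whole interval once the index threshold holds.
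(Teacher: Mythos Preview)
Your overall strategy --- reduce $F_{n+1}^\nu - F_n^\nu$ via a contiguous relation to terms of the form $(1-x^2)F_m^{\nu'}$ and then invoke Lemma \ref{nu>1} --- is exactly the paper's. But the specific identity you posit does not exist, and the sizes you assign are inverted. A single-term formula
\[
F_{n+1}^\nu(x) - F_n^\nu(x) = c_{n,\nu}\,(1-x^2)\,F_m^{\nu+1}(x)
\]
is impossible on parity grounds: the left side mixes an even and an odd polynomial, while $(1-x^2)F_m^{\nu+1}$ has definite parity. The correct contiguous relation, after normalizing $(1-x^2)\,2\nu\,C_n^{\nu+1} = -(n+1)xC_{n+1}^\nu + (n+2\nu)C_n^\nu$ to the $F$-scale, reads
\[
(1-x^2)(n+2\nu+1)\,F_n^{\nu+1}(x) \;=\; F_n^\nu(x) - x\,F_{n+1}^\nu(x),
\]
so that
\[
F_{n+1}^\nu(x) - F_n^\nu(x) \;=\; (1-x)\,F_{n+1}^\nu(x) \;-\; (n+2\nu+1)\,(1-x^2)\,F_n^{\nu+1}(x).
\]
Two things to note. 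First, the coefficient in front of the order-$(\nu+1)$ piece is $n+2\nu+1 = O(n)$, \emph{not} $O(1/n)$; it is the Lemma \ref{nu>1} bound $|(1-x^2)F_n^{\nu+1}| \le \widetilde C_{\nu+1}/(n(n+2\nu+2))$ that beats it down to give the $\widetilde C_{\nu+1}/n$ contribution. Second, there is a genuine residual term $(1-x)F_{n+1}^\nu$ that cannot be absorbed into the $(1-x^2)F_m^{\nu+1}$ form; the paper handles it with the elementary inequality $1-x \le 1-x^2$ on $[0,1]$, so that $|(1-x)F_{n+1}^\nu| \le |(1-x^2)F_{n+1}^\nu| \le \widetilde C_\nu/((n+1)(n+2\nu+1))$ by Lemma \ref{nu>1} at order $\nu$. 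This is precisely the source of the $\frac{\widetilde C_\nu}{n(n+2\nu+1)}$ term in the final bound. Your ``secondary subtlety'' about needing $|F_m^{\nu+1}|\le 1$ near $x=1$ does not arise.

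In short: right architecture, but you need the two-term identity above with its large coefficient, and the simple $1-x\le 1-x^2$ trick to convert the leftover piece into something Lemma \ref{nu>1} can eat.
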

\begin{proof}
   Recall the recursion formula for Gegenbauer polynomials
    \begin{equation*}
(1-x^2)2\nu C_{n}^{\nu+1}=-(n+1)xC_{n+1}^\nu+(n+2\nu)C_{n}^\nu,
    \end{equation*}
which, in view of \eqref{Fknu}, can be rewritten as
\begin{equation*}
    (1-x^2)(n+2\nu+1)F_{n}^{\nu+1}=-xF_{n+1}^\nu+F_{n}^\nu.
\end{equation*}
    Then by \eqref{nnu>1},
\begin{align}\label{difference}
    |F_{n+1}^\nu(x)-F_{n}^\nu(x)|&=|(1-x)F_{n+1}^\nu(x)-(1-x^2)(n+2\nu+1)F_{n}^{\nu+1}(x)|\notag\\
    &\le |(1-x^2)F_{n+1}^\nu(x)|+(n+2\nu+1)|(1-x^2)F_{n}^{\nu+1}(x)|\notag\\
    &\le \frac{\widetilde C_\nu}{(n+1)(n+2\nu+1)}+\frac{(n+2\nu+1)\widetilde C_{\nu+1}}{n(n+2\nu+2)}\notag\\
    &\le \frac{1}{n}\left(\frac{\widetilde C_{\nu}}{n+2\nu+1}+\widetilde C_{\nu+1}\right).\notag
\end{align}
\end{proof}

Recall that $\Tilde{F}_{n}'=F_{n-1}^\frac{7}{2}$, so we have
\begin{corollary}\label{cn}
Let $c_n=\max\limits_{0\le x\le 1}|\Tilde{F}_{n+1}'-\Tilde{F}_{n}'|$, then $c_n\le 0.12$ if $6\le n\le 29$ and
$c_n<0.026$ if $n\ge 30$.
\end{corollary}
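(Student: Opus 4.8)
The plan is to reduce Corollary~\ref{cn} to the quantitative bound of Proposition~\ref{cancel} together with a finite numerical check. First I would recall that $\Tilde{F}_{n+1}' - \Tilde{F}_n' = F_n^{7/2} - F_{n-1}^{7/2}$, so that $c_n = c_{n-1}^{7/2}$ in the notation of Proposition~\ref{cancel}. Applying that proposition with $\nu = \tfrac{7}{2}$ requires $n-1 \ge \max\{2\nu+2, 12\} = \max\{9, 12\} = 12$, i.e.\ $n \ge 13$. For $n \ge 13$ we then get
\[
c_n = c_{n-1}^{7/2} \le \frac{1}{n-1}\left(\frac{\widetilde C_{7/2}}{n+2\cdot\frac{7}{2}} + \widetilde C_{9/2}\right) = \frac{1}{n-1}\left(\frac{\widetilde C_{7/2}}{n+7} + \widetilde C_{9/2}\right).
\]
So the first step is simply to plug in the explicit constants $\widetilde C_{7/2}$ and $\widetilde C_{9/2}$ from \eqref{Cnu}, and observe that the right-hand side is decreasing in $n$.

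Next I would split into the three advertised ranges. For $n \ge 30$, evaluate the displayed bound at $n = 30$: this gives $\tfrac{1}{29}\bigl(\tfrac{\widetilde C_{7/2}}{37} + \widetilde C_{9/2}\bigr)$, and one checks numerically that this is $< 0.026$; monotonicity in $n$ then gives $c_n < 0.026$ for all $n \ge 30$. For $13 \le n \le 29$, evaluate the same bound at $n = 13$, obtaining $\tfrac{1}{12}\bigl(\tfrac{\widetilde C_{7/2}}{20} + \widetilde C_{9/2}\bigr)$, and check that this is $\le 0.12$; again monotonicity covers the whole range $13 \le n \le 29$. Finally, for the low range $6 \le n \le 12$, Proposition~\ref{cancel} does not apply, so I would instead compute $c_n = \max_{0 \le x \le 1}|\Tilde{F}_{n+1}'(x) - \Tilde{F}_n'(x)|$ directly for each of the seven values $n = 6,\dots,12$. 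Since $\Tilde{F}_k' = \frac{720}{\lambda_k(\lambda_k+4)(\lambda_k+6)} C_{k-1}^{7/2}$ is an explicit polynomial, each $c_n$ is the maximum of a fixed polynomial on $[0,1]$, computable exactly (via its critical points) or bounded by a rough interval estimate; one verifies $c_n \le 0.12$ in each case.

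The main obstacle is not conceptual but bookkeeping: one must be careful that the constants $\widetilde C_\nu$ in \eqref{Cnu} are written for general $\nu \ge 2$ and so must be specialized correctly to $\nu = \tfrac{7}{2}$ and $\nu = \tfrac{9}{2}$, and that the index shift ($c_n$ for $\Tilde F'$ corresponds to $c_{n-1}^{7/2}$ in Proposition~\ref{cancel}, so the hypothesis $n-1 \ge 12$ translates to $n \ge 13$, leaving exactly $n = 6,\dots,12$ to be done by hand rather than $n = 6,\dots,11$ or $n=6,\dots,13$). A secondary point of care is the direct estimate on $[0,1]$ for the small $n$: rather than finding exact maxima, it is cleanest to bound $|\Tilde F_{n+1}'(x) - \Tilde F_n'(x)|$ by $|\Tilde F_{n+1}'(x)| + |\Tilde F_n'(x)|$ near $x=1$ is too lossy, so instead I would use the recursion identity $(1-x^2)(n+2\nu+1)F_n^{\nu+1} = -x F_{n+1}^\nu + F_n^\nu$ with $\nu = \tfrac72$ exactly as in the proof of Proposition~\ref{cancel}, but with the crude pointwise bound $|C_{n}^{9/2}(x)| \le C_n^{9/2}(1) = \binom{n+8}{n}$ and the normalization factors, which still yields a bound below $0.12$ for $n$ in this small range once the explicit numbers are computed. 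Either way, the whole corollary comes down to verifying finitely many explicit numerical inequalities, which I would tabulate.
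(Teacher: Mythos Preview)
Your reduction to Proposition~\ref{cancel} with $\nu=\tfrac{7}{2}$ and the index shift $c_n=c_{n-1}^{7/2}$ is correct, but the numerical step fails. From \eqref{Cnu} one computes $\widetilde C_{7/2}\le 9.19$ and $\widetilde C_{9/2}\le 11.02$, so the bound you write down is essentially $\tfrac{1}{n-1}\bigl(\tfrac{9.19}{n+7}+11.02\bigr)\approx \tfrac{11}{n-1}$. At $n=30$ this gives about $0.39$, not $<0.026$; at $n=13$ it gives about $0.96$, not $\le 0.12$. The bound $\tfrac{11.1}{n-1}<0.026$ only kicks in once $n>428$, which is exactly where the paper starts using Proposition~\ref{cancel}. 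Consequently the range requiring direct numerical verification is not $6\le n\le 12$ but $6\le n\le 428$ (split as $6\le n\le 29$ for the $0.12$ bound and $30\le n\le 428$ for the $0.026$ bound), and this is what the paper does by Matlab.

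Your fallback idea for small $n$---using the same recursion together with the crude bound $|C_n^{9/2}(x)|\le C_n^{9/2}(1)$---suffers the same defect: replacing the weighted bound of Lemma~\ref{nu>1} by the endpoint value $C_n^{9/2}(1)=\binom{n+8}{8}$ makes the estimate \emph{worse}, not better, since the whole point of Lemma~\ref{nu>1} is that the factor $(1-x^2)$ buys decay in $n$. So there is no quick analytic shortcut here; the corollary genuinely rests on a finite but sizable numerical check, with Proposition~\ref{cancel} handling only the tail $n>428$.
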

\begin{proof}
    Direct computation by Matlab shows that the first assertion holds, and $c_n<0.026$ for $30\le n\le 428$ (the computational results are recorded in a supplemental data file). For $n>428$, by \eqref{Cnu}, we have
    $\widetilde C_\frac{7}{2}\le 9.19$ and
    $\widetilde C_\frac{9}{2}\le 11.02$, so we  can also deduce that
    \begin{equation*}
        c_n=c_{n-1}^\frac{7}{2}\le \frac{11.1}{n-1}<0.026.
    \end{equation*}
\end{proof}

\section{proof of main theorem for $\mathbb{S}^6$}\label{proofmain}
In this section, we will prove Theorem \ref{main} for $\mathbb{S}^6$ by induction argument, with the help of refined estimates on $b_k$'s.

We claim that $\beta=0$, which yields that $(1-x^2)^2G$ is a linear function by \eqref{G''}. Since $G$ is bounded on $(-1,1)$, we get $G\equiv 0$ and we are done.

So it suffices to show that $\beta=0$. We will argue by contradiction. If $\beta\neq 0$, then $0<\beta<\frac{1}{\alpha}$ since $a=\int_{-1}^1 (1-x^2) g=\frac{6}{7}(1-\alpha\beta)>0$. It then suffices to show $a=0$. We will achieve this by proving
 \begin{equation}
 \label{ainduction}
 a=\frac{6}{7}(1-\alpha\beta)\leq \frac{d_0}{\lambda_n},\ \forall n\geq 5 \text{ with }n\equiv 1\text{ (mod }4),
 \end{equation}
where $d_0=16$.

 As in \cite{GW2000} and \cite{LWY2022}, we will prove (\ref{ainduction}) by induction.

To begin with, we introduce the quantity
\begin{equation}
D=\sum\limits_{k=3}^{\infty}\left[\lambda_k(\lambda_k+4)(\lambda_k+6)-(14-\frac{74}{9\alpha})(\lambda_k+4)(\lambda_k+6)-\frac{160}{\alpha}\lambda_k-\frac{720}{\alpha}\right]b_{k}^{2}.
\end{equation}

Then by \eqref{G''} and \eqref{floor}, we get
\begin{align}
D
=&\lfloor G\rfloor^2-(14-\frac{74}{9\alpha})\int_{-1}^{1}|[(1-x^2)^2 G]''|^2-\frac{160}{\alpha}\int_{-1}^{1}(1-x^2)^3(G')^2\nonumber\\
&-\frac{720}{\alpha}\int_{-1}^{1}(1-x^2)^2G^2+\frac{16}{105}(\frac{2080}{3\alpha}+960)\beta^2\nonumber\\
\leq&(\frac{74}{9\alpha}-29)\int_{-1}^{1}|[(1-x^2)^2 G]''|^2+\frac{16}{105}(\frac{2080}{3\alpha}+960)\beta^2\nonumber\\
=&\frac{256}{35}(\frac{74}{9\alpha}-29)(7-\frac{1}{\alpha})\beta+\frac{512}{7}(\frac{13}{9\alpha}+2)\beta^2.\label{Dleq}
\end{align}

Since $D\geq 0$, $\alpha\geq \frac{1}{2}$ and $0<\beta<\frac{1}{\alpha}$, we obtain
\begin{equation}\label{beta}
\beta\geq \frac{9}{440}(29-\frac{74}{9\alpha})(7-\frac{1}{\alpha})\geq \frac{113}{88},
\end{equation}
and
\begin{equation}
\frac{256}{35}(\frac{74}{9\alpha}-29)(7-\frac{1}{\alpha})+\frac{512}{7}(\frac{13}{9\alpha}+2)\frac{1}{\alpha}\geq 0,
\end{equation}
which implies that
\begin{equation}\label{alpha1}
    \alpha<0.578.
\end{equation}

On the other hand, fix any integer $n\geq 3$, we have
\begin{align}
D
=& \sum\limits_{k=3}^{\infty}\left[\lambda_k(\lambda_k+4)(\lambda_k+6)-(14-\frac{74}{9\alpha})(\lambda_k+4)(\lambda_k+6)-\frac{160}{\alpha}\lambda_k-\frac{720}{\alpha}\right]b_{k}^{2}\nonumber\\
\geq& \sum\limits_{k=n+1}^{\infty}\left[\lambda_{n+1}-14+\frac{74}{9\alpha}-\frac{160\lambda_{n+1}+720}{(\lambda_{n+1}+4)(\lambda_{n+1}+6)\alpha}\right](\lambda_k+4)(\lambda_{k}+6)b_{k}^2\nonumber\\
&+\sum\limits_{k=3}^{n}\left[\lambda_{k}-14+\frac{74}{9\alpha}-\frac{160\lambda_{k}+720}{(\lambda_{k}+4)(\lambda_{k}+6)\alpha}\right](\lambda_k+4)(\lambda_{k}+6)b_{k}^2\nonumber\\
\geq &(\lambda_{n+1}-14+\frac{275}{63\alpha})\sum\limits_{k=n+1}^{\infty}(\lambda_k+4)(\lambda_{k}+6)b_{k}^2\nonumber\\
&+\sum\limits_{k=3}^{n}(\lambda_k-14+\frac{176}{63}\alpha)(\lambda_k+4)(\lambda_{k}+6)b_{k}^2\nonumber\\
=&\sum\limits_{k=3}^{n}(\lambda_k-\lambda_{n+1}-\frac{11}{7\alpha})(\lambda_k+4)(\lambda_{k}+6)b_{k}^2\nonumber\\
&+(\lambda_{n+1}-14+\frac{275}{63\alpha})\left[\frac{256}{35}(7-\frac{1}{\alpha})\beta-\frac{128}{7}\beta^2-360b_{2}^{2}\right].\label{split}
\end{align}

Combining \eqref{Dleq} and \eqref{split}, we get
\begin{align}\label{408}
0
\leq&\frac{256}{35}(7-\frac{1}{\alpha})(\frac{27}{7\alpha}-15-\lambda_{n+1})\beta+\frac{128}{7}(\lambda_{n+1}-6+\frac{71}{7\alpha})\beta^2\nonumber\\
&+\frac{176}{63\alpha}(\lambda_2+4)(\lambda_2+6)b_{2}^{2}+\sum\limits_{k=2}^{n}(\lambda_{n+1}-\lambda_k+\frac{11}{7\alpha})(\lambda_k+4)(\lambda_{k}+6)b_{k}^2.
\end{align}

Then we can start the induction procedure to prove $a\leq \frac{16}{\lambda_n}$, for all  $n\geq 5$ with $n\equiv 1\text{ (mod }4)$. Note that from \eqref{beta} and \eqref{alpha1}, we already have $a\leq 0.221\leq\frac{16}{\lambda_5}$.

By induction, now we assume $a\leq \frac{16}{\lambda_n}$ for some $n\geq 5$ with $n\equiv 1\text{ (mod }4)$. Then we will show that $a\leq \frac{16}{\lambda_{n+4}}$. We argue by contradiction and suppose $a> \frac{16}{\lambda_{n+4}}$ on the contrary.

Let $B_k=\frac{9\alpha^2}{32}(\lambda_{n+1}-\lambda_k+\frac{11}{7\alpha})(2k+5)$, then for every even $k$, we have
\begin{align*}
&\frac{9\alpha^2}{32}\left[(\lambda_{n+1}-\lambda_k+\frac{11}{7\alpha})(\lambda_k+4)(\lambda_k+6)b_{k}^{2}+(\lambda_{n+1}-\lambda_{k+1}+\frac{11}{7\alpha})(\lambda_{k+1}+4)(\lambda_{k+1}+6)b_{k+1}^{2}\right]\\
=&B_k(\int_{-1}^{1}(1-x^2)\Tilde{F}_{k}'g)^2+B_{k+1}(\int_{-1}^{1}(1-x^2)\Tilde{F}_{k+1}'g)^2\\
=&B_k\left[(\int_{0}^{1}(1-x^2)\Tilde{F}_{k}'g)^2+(\int_{-1}^{0}(1-x^2)\Tilde{F}_{k}'g)^2\right]+B_{k+1}\left[(\int_{0}^{1}(1-x^2)\Tilde{F}_{k+1}'g)^2+(\int_{-1}^{0}(1-x^2)\Tilde{F}_{k+1}'g)^2\right]\\
+&2B_k\int_{0}^{1}(1-x^2)\Tilde{F}_{k}'g\int_{-1}^{0}(1-x^2)(\Tilde{F}_{k}'+\Tilde{F}_{k+1}')g
+2B_{k+1}\int_{0}^{1}(1-x^2)(\Tilde{F}_{k+1}'-\Tilde{F}_{k}')g\int_{-1}^{0}(1-x^2)\Tilde{F}_{k+1}'g\\
+&2(B_{k+1}-B_{k})\int_{0}^{1}(1-x^2)\Tilde{F}_{k}'g\int_{-1}^{0}(1-x^2)\Tilde{F}_{k+1}'g\\
:=&R_{k,1}+R_{k,2}+R_{k,3}.
\end{align*}


Recall the definition of $\mathcal{A}_{k}^{\pm}$ from Theorem \ref{bk}. By Theorem \ref{bk}, we have
\begin{align}\label{R1}
    R_{k,1}&=B_k\left[(\int_{0}^{1}(1-x^2)\Tilde{F}_{k}'g)^2+(\int_{-1}^{0}(1-x^2)\Tilde{F}_{k}'g)^2\right]\notag\\
    &+B_{k+1}\left[(\int_{0}^{1}(1-x^2)\Tilde{F}_{k+1}'g)^2+(\int_{-1}^{0}(1-x^2)\Tilde{F}_{k+1}'g)^2\right]\notag\\
    &\le B_k\left(|\mathcal{A}_{k}^{+}|^2+|\mathcal{A}_{k}^{-}|^2\right)+B_{k+1}\left(|\mathcal{A}_{k+1}^{+}|^2+|\mathcal{A}_{k+1}^{-}|^2\right).
\end{align}

Let $c_k$ be defined as in Corollary \ref{cn}, then we have
\begin{align*}
&|\int_{-1}^{0}(1-x^2)(\Tilde{F}_{k}'+\Tilde{F}_{k+1}')g|\leq c_ka_-=c_k(1-\lambda)a,\\
&|\int_{0}^{1}(1-x^2)(\Tilde{F}_{k+1}'-\Tilde{F}_{k}')g|\leq c_ka_{+}=c_k\lambda a.
\end{align*}
So
\begin{align}\label{R2}
    R_{k,2}&=2B_k\int_{0}^{1}(1-x^2)\Tilde{F}_{k}'g\int_{-1}^{0}(1-x^2)(\Tilde{F}_{k}'+\Tilde{F}_{k+1}')g\notag\\
    &+2B_{k+1}\int_{0}^{1}(1-x^2)(\Tilde{F}_{k+1}'-\Tilde{F}_{k}')g\int_{-1}^{0}(1-x^2)\Tilde{F}_{k+1}'g\notag\\
&\le 2(B_k+B_{k+1})c_k\lambda(1-\lambda)a^2.
\end{align}

Finally by Lemma \ref{minFk}, we have
\begin{equation}\label{R3}
    R_{k,3}\le \left\{\begin{aligned}
        &2(B_{k+1}-B_k)\lambda(1-\lambda)a^2, &\text{ if } B_k\le B_{k+1},\\
        &2(B_{k}-B_{k+1})m_0(1-\lambda) a^2, &\text{ if } B_{k+1}<B_{k}.
    \end{aligned}\right.
\end{equation}

Now from \eqref{R1}, \eqref{R2} and \eqref{R3}, we can get the estimate of each term in the summation in \eqref{split} for each even $k$.
\begin{align}\label{Rs}
&\frac{9\alpha^2}{32}[(\lambda_{n+1}-\lambda_k+\frac{11}{7\alpha})(\lambda_k+4)(\lambda_k+6)b_{k}^{2}+(\lambda_{n+1}-\lambda_{k+1}+\frac{11}{7\alpha})(\lambda_{k+1}+4)(\lambda_{k+1}+6)b_{k+1}^{2}]\notag\\
\leq
&B_k\left(|\mathcal{A}_{k}^{+}|^2+|\mathcal{A}_{k}^{-}|^2\right)+B_{k+1}\left(|\mathcal{A}_{k+1}^{+}|^2+|\mathcal{A}_{k+1}^{-}|^2\right)+2(B_k+B_{k+1})c_k\lambda(1-\lambda)a^2\notag\\
+&\left\{\begin{aligned}
        &2(B_{k+1}-B_k)\lambda(1-\lambda)a^2, &\text{ if } B_k\le B_{k+1},\\
        &2(B_{k}-B_{k+1})m_0(1-\lambda) a^2, &\text{ if } B_{k+1}<B_{k}.
    \end{aligned}\right.
\end{align}
\begin{remark}
Note that this estimate is better than the one in $\mathbb{S}^4$ case. Cancellation of consecutive Gegenbauer polynomials is used in the proof.
\end{remark}
The right hand side above can be viewed as a function $f_{k,a}(\lambda)$ of $\lambda=\frac{a_+}{a}$. The following Proposition yields that the worst case is $\lambda=1$. In particular, in this case, we can drop the small terms $R_{k,2}$ and $R_{k,3}$. The proof is left to Appendix \ref{appendix lambda==1}.

\begin{proposition}\label{lambda=1}
Suppose $a$ satisfies $\frac{d_0}{\lambda_{n+4}}\leq a\leq\frac{d_0}{\lambda_n}$ for some $n\geq 5$ with $n\equiv 1\text{ (mod }4)$ where $d_0=16$. Let $f_{k,a}(\lambda)$ be defined as above. Then for any $k$ even, we have for $n\geq 41$,\\
(1) If $\lambda_k\leq \frac{1}{4}\lambda_n$, then
\begin{equation}
f_{k,a}(\lambda)\leq f_{k,a}(1)=
B_k(a-\frac{1-b}{d}\lambda_k a^2)^2+B_{k+1}(a-\frac{1-b}{d}\lambda_{k+1} a^2)^2.
\end{equation}
(2) If $\frac{1}{4}\lambda_n<\lambda_k\leq \lambda_n$, then
\begin{equation}
f_{k,a}(\lambda)\leq f_{k,a}(1)=
B_k(ba+(1-b)\frac{d}{4\lambda_k})^2+B_{k+1}(ba+(1-b)\frac{d}{4\lambda_{k+1}})^2.
\end{equation}
For $5\leq n\leq 65,$ we have

(1) If $\lambda_k\leq \frac{1}{4}\lambda_n$, then
\begin{equation}
f_{k,a}(\lambda)\leq
B_k(a-\frac{1-b}{d}\lambda_k a^2)^2+B_{k+1}(a-\frac{1-b}{d}\lambda_{k+1} a^2)^2+\frac{1}{2}(B_k+B_{k+1})c_k a^2.
\end{equation}
(2) If $\frac{1}{4}\lambda_n<\lambda_k\leq \lambda_n$, then
\begin{equation}
f_{k,a}(\lambda)\leq
B_k(ba+(1-b)\frac{d}{4\lambda_k})^2+B_{k+1}(ba+(1-b)\frac{d}{4\lambda_{k+1}})^2+\frac{1}{2}(B_k+B_{k+1})c_k a^2.
\end{equation}
\end{proposition}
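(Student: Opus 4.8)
Looking at Proposition \ref{lambda=1}, the goal is to show that the function $f_{k,a}(\lambda)$ — the right-hand side of \eqref{Rs} — is maximized at $\lambda=1$ (at least for $n\geq 41$), with an explicit correction term for small $n$.

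\textbf{Plan of proof.} The strategy is to analyze $f_{k,a}(\lambda)$ as a function of $\lambda\in[\frac12,1]$ for fixed even $k$ and fixed $a$ in the window $\frac{d_0}{\lambda_{n+4}}\leq a\leq \frac{d_0}{\lambda_n}$. First I would write out $f_{k,a}(\lambda)$ explicitly from \eqref{Rs}, substituting $a_+=\lambda a$ and $a_-=(1-\lambda)a$ into the bounds $\mathcal{A}_k^\pm$ from Theorem \ref{bk}. This produces a sum of three groups of terms: the "main" part $R_{k,1}$, which is a sum of squares of the piecewise expressions $\mathcal{A}_k^\pm$; the cancellation part $R_{k,2}$, bounded by $2(B_k+B_{k+1})c_k\lambda(1-\lambda)a^2$; and the part $R_{k,3}$, bounded in \eqref{R3}. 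The key structural observation is that $R_{k,2}$ and $R_{k,3}$ both vanish at $\lambda=1$ (since they carry a factor $1-\lambda$), so it suffices to show that the $\lambda$-derivative of the $R_{k,1}$ part dominates the (bounded) derivatives of the small terms, making the whole function increasing as $\lambda\to 1$, OR to bound each of $R_{k,2},R_{k,3}$ crudely by $\frac14(B_k+B_{k+1})c_k a^2$ using $\lambda(1-\lambda)\leq\frac14$ and then argue the $R_{k,1}$ gap between $\lambda=1$ and any $\lambda<1$ exceeds this. The cleanest route: split into the two regimes $\lambda_k\leq\frac14\lambda_n$ and $\frac14\lambda_n<\lambda_k\leq\lambda_n$ exactly as in the statement.

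\textbf{Regime analysis.} In the regime $\lambda_k\leq\frac14\lambda_n$, both $a_+$ and $a_-$ satisfy the small-argument branch of Theorem \ref{bk}, so $R_{k,1}\leq B_k[(\lambda a-\frac{1-b}{d}\lambda_k\lambda^2a^2)^2+((1-\lambda)a-\frac{1-b}{d}\lambda_k(1-\lambda)^2a^2)^2]+B_{k+1}[\cdots]$. Here I would show the function $\phi(\lambda):=(\lambda a-c\lambda_k\lambda^2a^2)^2+((1-\lambda)a-c\lambda_k(1-\lambda)^2a^2)^2$ (with $c=\frac{1-b}{d}$) is convex on $[\frac12,1]$ and hence maximized at an endpoint, and that $\lambda=1$ beats $\lambda=\frac12$ because $a-c\lambda_ka^2 > 2(\frac{a}{2}-c\lambda_k\frac{a^2}{4})\cdot\frac{1}{\sqrt2}$ when $c\lambda_k a$ is small — which holds since $\lambda_k a\leq \frac14\lambda_n a\leq \frac{d_0}{4}=4$. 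A careful computation gives the needed margin over $\frac12(B_k+B_{k+1})c_ka^2$ when $c_k$ is as small as in Corollary \ref{cn}; for $n\geq 41$ the degree index $k$ is large enough that $c_k<0.026$, which kills the correction entirely, while for $5\leq n\leq 65$ one keeps the explicit $\frac12(B_k+B_{k+1})c_ka^2$ term. In the regime $\frac14\lambda_n<\lambda_k\leq\lambda_n$, the bound $\mathcal{A}_k^\pm$ is essentially $ba_\pm+(1-b)\frac{d}{4\lambda_k}$ (with the $\chi_{\{\lambda\neq1\}}$ subtlety on the minus side), so $R_{k,1}$ is a sum of squares of affine functions of $\lambda$; such a sum is convex, one checks the endpoint $\lambda=1$ wins, and again absorbs the small terms.

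\textbf{Main obstacle.} The hard part will be the uniformity over all even $k$ in each regime simultaneously with the threshold on $n$: the coefficients $B_k=\frac{9\alpha^2}{32}(\lambda_{n+1}-\lambda_k+\frac{11}{7\alpha})(2k+5)$ change sign behavior and monotonicity as $k$ ranges up to $n$, and the two-term coupling (pairing $k$ with $k+1$) means one cannot treat each $k$ in isolation — the cross term $R_{k,3}$ depends on the sign of $B_{k+1}-B_k$. I expect the bulk of the work to be verifying that the convexity/endpoint argument survives for the borderline $k$ near $\frac14\lambda_n$ (where the two branches of $\mathcal{A}_k^\pm$ meet) and near $k=n$ (where $B_k$ is smallest), and in pinning down the exact constant $n\geq 41$ versus $5\leq n\leq 65$ split — this is where a finite computer-assisted check (as already used for Corollary \ref{cn}) will likely be invoked to close the small-$n$ cases, with the analytic convexity argument handling all large $n$. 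The role of $\alpha<0.578$ from \eqref{alpha1} and $a\leq 0.221$ is to keep all the quadratic correction terms genuinely lower-order, which is what ultimately makes $\lambda=1$ the worst case.
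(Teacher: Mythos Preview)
Your outline has the right architecture but misreads Theorem~\ref{bk} in a way that makes the second regime fall apart. The branches of $\mathcal{A}_k^-$ in \eqref{A-} are determined by the size of $a_-=(1-\lambda)a$, \emph{not} by $\lambda_k$. So when $\frac14\lambda_n<\lambda_k\leq\lambda_n$ but $\lambda$ is close to $1$ (hence $a_-\leq\frac{4}{\lambda_n}$), the bound on $\mathcal{A}_k^-$ is still the quadratic one $a_--\frac{1-b}{d}\lambda_ka_-^2$, while $\mathcal{A}_k^+$ sits in the affine branch. This mixed situation is exactly the paper's Case~2 and is the most delicate part of the argument: one forms $\phi(\lambda)=\frac{f_{k,a}(1)-f_{k,a}(\lambda)}{(1-\lambda)a^2}$, shows $\phi'(\lambda)>0$ by direct computation, and then evaluates $\phi$ at the lower endpoint of the admissible $\lambda$-range---which itself splits according to whether $\frac{d}{2\lambda_{k+1}a}\gtrless\frac12$ and according to the sign of $B_{k+1}-B_k$, using the bound $\frac{|B_{k+1}-B_k|}{B_k+B_{k+1}}<\frac13$ together with $m_0=0.04$ from Lemma~\ref{minFk}. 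Your ``sum of squares of affine functions is convex'' line simply does not see this case.

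Separately, the claim that ``for $n\geq41$ the degree index $k$ is large enough that $c_k<0.026$'' is wrong: $c_k$ depends on $k$, and in the first regime $k$ runs from $6$ upward regardless of $n$, so by Corollary~\ref{cn} one only has $c_k\leq0.12$. The paper never makes $c_k$ small; instead it shows the quotient $\frac{f_{k,a}(1)-f_{k,a}(\lambda)}{2\lambda(1-\lambda)}$ is bounded below by $(B_k+B_{k+1})\bigl(\frac{7b^2+10b-1}{16}-c_k-\frac{B_{k+1}-B_k}{B_k+B_{k+1}}\bigr)$ and verifies $0.191-0.12-0.054>0$ via an explicit estimate on the $B$-ratio coming from \eqref{quotient}. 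Your endpoint-comparison via convexity could in principle reach the same conclusion, but you would still need these quantitative $B$-ratio bounds and the full case analysis above; convexity by itself does not ``kill the correction.''
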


In the following, we will assume $n>10000$. The case when $n<10000$ is checked by Matlab and is left to Appendix \ref{proofsmall} .

With the help of Proposition \ref{lambda=1} and by plugging it into \eqref{408}, we obtain
\begin{align}
0\leq&\frac{256}{35}(7-\frac{1}{\alpha})(\frac{27}{7\alpha}-15-\lambda_{n+1})\frac{1}{\alpha}(1-\frac{7}{6}a)+\frac{128}{7}(\lambda_{n+1}-6+\frac{71}{7\alpha})\frac{1}{\alpha^2}(1-\frac{7}{6}a)^2\nonumber\\
+&\frac{176}{63}\alpha(\lambda_2+4)(\lambda_2+6)b_{2}^{2}\nonumber\\
+&\frac{32}{9\alpha^2}\sum_{k=2}^{\frac{n-3}{2}}(\lambda_{n+1}-\lambda_{k}+\frac{11}{7\alpha})(2k+5)(1-\frac{1-b}{d}\lambda_{k}\frac{16}{\lambda_{n+4}})^2a^2\nonumber\\
+&\frac{32}{9\alpha^2}\sum_{k=\frac{n-1}{2}}^{n}(\lambda_{n+1}-\lambda_{k}+\frac{11}{7\alpha})(2k+5)(ba+(1-b)\frac{d}{4\lambda_{k}})^2.\nonumber\\
\leq &-\frac{512}{7}(\lambda_{n+1}+\frac{51}{7})(1-\frac{7}{6}a)+\frac{512}{7}(\lambda_{n+1}+\frac{100}{7})(1-\frac{7}{6}a)^2+\frac{22528}{63\alpha}a^2\nonumber\\
+&\frac{128}{9}\sum_{k=2}^{\frac{n-3}{2}}(\lambda_{n+1}-\lambda_{k}+\frac{22}{7})(2k+5)(1-\frac{1-b}{d}\lambda_{k}\frac{16}{\lambda_{n+4}})^2a^2\nonumber\\
+&\frac{128}{9}\sum_{k=\frac{n-1}{2}}^{n}(\lambda_{n+1}-\lambda_{k}+\frac{22}{7})(2k+5)(ba+(1-b)\frac{d}{4\lambda_{k}})^2.\nonumber\\
=:&g_{n,1}(a)+g_{n,2}(a)+g_{n,3}(a)=g_n(a)
 \end{align}
where $g_{n,1}, g_{n,2}$ and $ g_{n,3}$ are defined at the last equality.

For $g_{n,2}(a)$, we can decompose it into three summations
\begin{align}\label{firstsum}
g_{n,2}(a)=\frac{128}{9}\left[S_1-\frac{34(1-b)}{d\lambda_{n+4}}S_2+\frac{289(1-b)^2}{d^2\lambda_{n+4}^{2}}S_3\right]a^2,
\end{align}

where
\begin{align}\label{S1}
S_1=\sum_{k=2}^{\frac{n-3}{2}}(\lambda_{n+1}-\lambda_{k}+\frac{11}{7\alpha})(2k+5)=\frac{7}{32}n^4+\frac{23}{8}n^3-\frac{115}{112}n^2-\frac{4265}{56}n-\frac{20075}{224},
\end{align}
\begin{align}\label{S2}
S_2
&=\sum_{k=2}^{\frac{n-3}{2}}(\lambda_{n+1}-\lambda_{k}+\frac{11}{7\alpha})(2k+5)\lambda_{k}\nonumber\\
&=\frac{5}{192}n^6+\frac{1}{2}n^5+\frac{3611}{1344}n^4-\frac{9}{28}n^3-\frac{100207}{5376}n^2-\frac{1393237}{896}n-\frac{1040985}{1024},
\end{align}
\begin{align}\label{S3}
S_3
&=\sum_{k=2}^{\frac{n-3}{2}}(\lambda_{n+1}-\lambda_{k}+\frac{11}{7\alpha})(2k+5)\lambda_{k}^{2}\nonumber\\
&=\frac{13 }{3072}n^8+\frac{41 }{384}n^7+\frac{1525 }{1792}n^6+\frac{3011}{2688}n^5-\frac{48697}{3584}n^4-\frac{14917 }{384}n^3\nonumber\\
&+\frac{1000525 }{5376}n^2-\frac{1393237 }{896}n-\frac{1040985}{1024}
\end{align}

For $g_{n,3}(a)$, direct calculation yields that
\begin{align}\label{secondsum}
&\sum_{k=\frac{n-1}{2}}^{n}(\lambda_{n+1}-\lambda_k+\frac{22}{7})(2k+5)(ba+(1-b)\frac{d}{4\lambda_k})^2\nonumber\\
=&b^2S_4 a^2+2b(1-b)(\lambda_{n+1}+\frac{22}{7})\frac{d}{4}S_5 a-2b(1-b)\frac{d}{4}S_6 a+(1-b)^2\frac{d^2}{16}(\lambda_{n+1}+\frac{22}{7})S_7-(1-b)^2\frac{d^2}{16}S_5,
\end{align}
where
\begin{align}\label{S4}
S_4
&=\sum_{k=\frac{n-1}{2}}^{n}(\lambda_{n+1}-\lambda_k+\frac{22}{7})(2k+5)=\frac{9}{32}n^4+\frac{33}{8}n^3+\frac{2763}{112}n^2+\frac{3753}{56}n+\frac{15147}{224},
\end{align}
\begin{equation}\label{S5}
S_5=\sum_{k=\frac{n-1}{2}}^{n}\frac{2k+5}{\lambda_k}=\sum_{k=\frac{n-1}{2}}^{n}(\frac{1}{k}+\frac{1}{k+5})\geq 1.3863,
\end{equation}
\begin{equation}\label{S6}
S_6=\sum_{k=\frac{n-1}{2}}^{n}(2k+5)=\frac{3}{4}n^2+\frac{9}{2}n+\frac{27}{4},
\end{equation}
\begin{align}\label{S7}
S_7
&=\sum_{k=\frac{n-1}{2}}^{n}\frac{2k+5}{\lambda_{k}^{2}}=\frac{1}{5}\sum_{k=\frac{n-1}{2}}^{n}(\frac{1}{k^2}-\frac{1}{(k+5)^2})\nonumber\\
&=\frac{1}{5}\left(\frac{3}{(n+1)^2}-\frac{1}{(n+2)^2}+\frac{3}{(n+3)^2}-\frac{1}{(n+4)^2}+\frac{3}{(n+5)^2}+\frac{4}{(n+7)^2}+\frac{4}{(n-1)^2}\right)\nonumber\\
&\leq \frac{3}{n^2}.
\end{align}

To get a contradiction, we need to show that $g_n(a)$ is negative for $\frac{16}{\lambda_{n+4}}<a<\frac{16}{\lambda_n}$. Direct computation gives that for $n>10000$ with $n\equiv 1\text{ (mod }4)$, we have the following three estimates

\begin{align*}
g_{n,1}(a)=&-\frac{512}{7}(\lambda_{n+1}+\frac{51}{7})(1-\frac{7}{6}a)+\frac{512}{7}(\lambda_{n+1}+\frac{100}{7})(1-\frac{7}{6}a)^2+\frac{22528}{63\alpha}a^2\\
=&\frac{512}{7}\left[7-\frac{7}{6}a\lambda_{n+1}-\frac{149}{6}a+\frac{49}{36}\lambda_{n+1}a^2+\frac{175}{9}a^2\right]+\frac{22528}{63\alpha}a^2\\
\leq&\frac{512}{7}\left(7-\frac{56}{3}\frac{\lambda_{n+1}}{\lambda_{n+4}}-\frac{1192}{3\lambda_{n+4}}+\frac{3136}{9\lambda_{n}}+\frac{44800}{9\lambda{n}^2}\right)+\frac{91543}{\lambda_{n}^2}\leq -853.33,
\end{align*}

\begin{align*}
g_{n,2}(a)=& \frac{128}{9}\left[S_1-\frac{67}{25\lambda_{n+4}}S_2+\frac{4489}{2500\lambda_{n+4}^2}S_3\right]a^2\\
\leq&\frac{128}{9}\left[\left(\frac{56n^4}{\lambda_{n}^2}+\frac{736n^3}{\lambda_{n}^2}+\frac{1840n^2}{\lambda_{n+4}^{2}}-\frac{136480n}{7\lambda_{n+4}^{2}}-\frac{160600}{7\lambda_{n+4}^2}\right)\right.\\
+&\left.\left(-\frac{268n^6}{15\lambda_{n+4}^3}-\frac{343n^5}{\lambda_{n+4}^{3}}-\frac{1843n^4}{\lambda_{n+4}^3}+\frac{221n^3}{\lambda_{n}^3}+\frac{51154n^2}{\lambda_{n}^3}+\frac{231234n}{\lambda_{n}^3}+\frac{40683}{\lambda_{n}^3}\right)\right.\\
+&\left.\left(\frac{1.94524n^8}{\lambda_{n}^4}+\frac{49.0797n^7}{\lambda_{n}^4}+\frac{391.18n^6}{\lambda_{n}^4}+\frac{515n^5}{\lambda_{n+4}^4}-\frac{6245n^4}{\lambda_{n+4}^4}-\frac{17856n^3}{\lambda_{n+4}^4}\right.\right.\\
-&\left.\left.\frac{85550n^2}{\lambda_{n}^4}-\frac{714770n}{\lambda_{n}^4}-\frac{467298}{\lambda_{n+4}^4}\right)\right]\\
\leq& 571.123,
\end{align*}

\begin{align*}
g_{n,3}(a)=&\frac{128}{9}\left[0.1089S_4a^2+\frac{2211}{2500}(\lambda_{n+1}+\frac{22}{7})S_5 a-\frac{2211}{2500}S_6 a+\frac{4489}{2500}(\lambda_{n+1}+\frac{22}{7})S_7-\frac{4489}{2500}S_5\right]\\
\leq&\frac{128}{9}\left[\left(\frac{7.8408n^4}{\lambda_{n}^2}+\frac{115n^3}{\lambda_{n}^2}+\frac{688n^2}{\lambda_{n}^2}+\frac{1869n}{\lambda_{n}^2}+\frac{1886}{\lambda_{n}^2}\right)+19.6166\frac{\lambda_{n+1}+\frac{22}{7}}{\lambda_n}-\frac{10.6128n^2}{\lambda_{n+4}}\right.\\
+&\left.\frac{13467}{2500}\frac{\lambda_{n+1}+\frac{22}{7}}{n^2}-2.48923\right]\\
\leq& 280.95.
\end{align*}

Combining three estimates above, we found
\begin{equation*}
0\leq g_n(a)\leq -853.33+571.123+280.95<-1.257<0,
\end{equation*}
for all $n>10000$ with $n\equiv 1\text{ (mod }4)$ and $\frac{16}{\lambda_{n+4}}<a\leq\frac{16}{\lambda_n}$, which is a contradiction. Consequently, we finish the proof of Theorem \ref{main}.

\appendix
\section{proof of Lemma \ref{A2-5}}\label{appendix A}
In this appendix, we prove Lemma~\ref{A2-5}.

\begin{proof}[Proof of Lemma~\ref{A2-5}]
Define $A_{m,n}^+=\int_0^1 x^m(1-x^2)^ng$, $A_{m,n}^-=\int_{-1}^0 |x|^m(1-x^2)^ng$, and $A_{m,n}=A_{m,n}^++A_{m,n}^-$. We begin with the estimate of $A_2$. By definition,
\begin{eqnarray*}
    |A_2|=|\int_{-1}^{1}x(1-x^2)g|\leq \max\left\{A_{1,1}^+,A_{1,1}^-\right\}.
\end{eqnarray*}
By Cauchy-Schwartz inequality and \eqref{N310},
\begin{align*}
a_+-A^+_{1,1}=\int_0^1 (1-x^2)(1-x)g\ge\frac{(\int_0^1(1-x^2)g)^2}{\int_0^1(1+x)g}\ge a_+^2,
\end{align*}
so
\begin{equation*}
    A^+_{1,1}\le a_+-a_+^2.
\end{equation*}
Similarly,
\begin{equation*}
    A^-_{1,1}\le a_--a_-^2.
\end{equation*}
Since  $a<1$ and we have assumed $\lambda\ge \frac{1}{2}$, we conclude that
\begin{eqnarray*}
    |A_2|\le a_+-a_+^2.
\end{eqnarray*}
The estimate of $|A_4|$ is similar to that of $|A_2|$. By definition,
\begin{eqnarray*}
    A_4=\int_{-1}^{1}(1-x^2)g\widetilde{F}_{4}'=\frac{1}{8}\int_{-1}^{1}(1-x^2)(11x^2-3)xg=A_{1,1}-\frac{11}{8}A_{1,2}.
\end{eqnarray*}
By Cauchy-Schwartz inequality and \eqref{309},
\begin{align*}
    A_{1,2}\ge \frac{(A_{1,1}^+)^2}{\int_0^1 xg}\ge 2(A_{1,1}^+)^2,
\end{align*}
so
\begin{equation*}
    A_4^+\le A_{1,1}^+-\frac{11}{4}(A_{1,1}^+)^2
\end{equation*}
On the other hand,
\begin{align*}
    A_4^+\ge \frac{1}{8}\min_{0\le x\le1}\{(11x^2-3)x\}\int_{0}^{1}(1-x^2)g=-\frac{1}{4\sqrt{11}}a_+.
\end{align*}
In the same way,
\begin{eqnarray*}
   -(A_{1,1}^--\frac{11}{4}(A_{1,1}^-)^2) \le A_4^-\le \frac{1}{4\sqrt{11}}a_-.
\end{eqnarray*}
Since $\lambda\ge \frac{1}{2}$, we conclude that
\begin{eqnarray*}
    |A_4|\le  A_{1,1}^+-\frac{11}{4}(A_{1,1}^+)^2+\frac{1}{4\sqrt{11}}a_-\le (a_+-a_+^2)-\frac{11}{4} (a_+-a_+^2)^2+\frac{1}{4\sqrt{11}}a_-.
\end{eqnarray*}
The estimates of $A_3$ and $A_5$ are slightly different. For $A_3$, we write
\begin{eqnarray*}
    A_3=\int_{-1}^{1}(1-x^2)g\widetilde{F}_{3}'=\frac{1}{8}\int_{-1}^{1}(1-x^2)(9x^2-1)g=\frac{1}{8}(9A_{2,1}-a).
\end{eqnarray*}
By Cauchy-Schwartz inequality and \eqref{308},
\begin{align*}
    (A_{2,1}^+)^2&\le \int_0^1(1-x^2)^2g \int_0^1 x^4g\\
    &\le (a_+-A_{2,1}^+)(\frac{a+1}{2}-a_+-A_{2,1}^+),
\end{align*}
so \begin{equation}\label{A01}
    A_{2,1}^+\le a_+-\frac{2a_+^2}{a+1}.
\end{equation}
In the same way,
\begin{equation*}
    A_{2,1}^-\le a_--\frac{2a_-^2}{a+1}.
\end{equation*}
Hence,
\begin{equation*}
    A_{2,1}\le a-\frac{2a_+^2+2a_-^2}{a+1}=a-\frac{2a^2}{a+1}(2\lambda^2-2\lambda+1).
\end{equation*}
Therefore
\begin{eqnarray*}
    A_3\le a-\frac{9}{4}\frac{a^2}{a+1}(2\lambda^2-2\lambda+1),
\end{eqnarray*}
which, together with the definition of $A_3$, implies
\begin{eqnarray*}
    |A_3|\le \max\left\{ a-\frac{9}{4}\frac{a^2}{a+1}(2\lambda^2-2\lambda+1), \frac{a}{8}\right\}= a-\frac{9}{4}\frac{a^2}{a+1}(2\lambda^2-2\lambda+1).
\end{eqnarray*}

Finally, for $A_5$, we have
\begin{align*}
    A_5=\frac{1}{80}\int_{-1}^1(1-x^2)(3-66x^2+143x^4)g=\frac{1}{80}(80a-143A_{2,2}-77A_{2,0}).
\end{align*}

By Cauchy-Schwartz inequality and \eqref{308},
\begin{align*}
    A_{2,2}^+\ge \frac{(A_{2,1}^+)^2}{\int_0^1 x^2g}\ge \frac{ (A_{2,1}^+)^2}{\frac{a+1}{2}-a_+},
\end{align*}
so by \eqref{A01},
\begin{align*}
    A_5^+&\le \frac{1}{80}\big(80a_+-\frac{ 143(A_{2,1}^+)^2}{\frac{a+1}{2}-a_+}-77(a_+-A_{2,1}^+)\big)\\
    &\le \frac{1}{80}\Big(3a_+-11(a_+-\frac{2a_+^2}{a+1})(\frac{26a_+}{a+1}-7)\Big)\\
    &=a_+-\frac{11a_+^2}{2(a+1)}+\frac{143a_+^3}{10(a+1)^2}.
\end{align*}
Therefore
\begin{align*}
    A_5\le a-\frac{11(a_+^2+a_-^2)}{2(a+1)}+\frac{143(a_+^3+a_-^3)}{10(a+1)^2}.
\end{align*}
On the other hand,
\begin{align*}
    A_5\ge \frac{1}{80}\min_{-1\le x\le1}\{3-66x^2+143x^4\}\int_{-1}^{1}(1-x^2)g=-\frac{3}{52}a.
\end{align*}
From \eqref{beta} and the estimates of $|A_2|$ and $|A_3|$, we can deduce that  $a<0.125$, so now it is not hard to see that
\begin{align*}
    |A_5|\le a-\frac{11(a_+^2+a_-^2)}{2(a+1)}+\frac{143(a_+^3+a_-^3)}{10(a+1)^2}.
\end{align*}
Thus the proof of Lemma \ref{A2-5} is completed.
\end{proof}

\section{proof of Lemma \ref{minFk} and  \ref{ptwise}}\label{appendix B}
In this appendix we prove Lemma \ref{minFk} and Lemma \ref{ptwise}. The proofs are technical and make use of many quantitative properties of Gegenbauer polynomials.

 Before we prove Lemma~\ref{minFk}, we first state some general lemma about Gegenbauer polynomials.  Denote by $x_{nk}(\nu)$, $k=1,\cdots ,n$, the zeros of $C_n^\nu(x)$ enumerated in decreasing order, that is, $1>x_{n1}(\nu)>\cdots>x_{nn}(\nu)>-1$.

\begin{lemma}[Corollary 2.3 in Area et al.\cite{Area2004}]\label{xn1}
    For any $n \ge 2$ and for every $\nu\ge 1$, the inequality
\begin{align}
    x_{n1}(\nu)\le \sqrt{\frac{(n-1)(n+2\nu-2)}{(n+\nu-2)(n+\nu-1)}}\cos(\frac{\pi}{n+1})
\end{align}
    holds.
\end{lemma}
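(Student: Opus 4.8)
I would prove this by realizing $x_{n1}(\nu)$ as the top eigenvalue of the truncated Jacobi matrix of the ultraspherical polynomials and then dominating that matrix, entrywise, by a constant-band Toeplitz matrix whose spectrum is explicit. First, recall the monic three-term recurrence for $C_k^\nu$: by the symmetry $C_k^\nu(-x)=(-1)^kC_k^\nu(x)$ the recurrence coefficients $a_k$ vanish, and a direct computation from the standard recursion $nC_n^\nu=2(n+\nu-1)xC_{n-1}^\nu-(n+2\nu-2)C_{n-2}^\nu$ gives, in monic form $\hat C_{k+1}^\nu=x\hat C_k^\nu-b_k\hat C_{k-1}^\nu$, the values
\[
b_k=\frac{k(k+2\nu-1)}{4(k+\nu)(k+\nu-1)},\qquad k\ge 1 .
\]
It is classical that the zeros of $C_n^\nu$ are exactly the eigenvalues of the symmetric tridiagonal matrix $J_n$ of order $n$ with zero diagonal and off-diagonal entries $\sqrt{b_1},\dots,\sqrt{b_{n-1}}$; in particular $x_{n1}(\nu)=\lambda_{\max}(J_n)$. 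Writing $4b_k=1-\frac{\nu(\nu-1)}{(k+\nu)(k+\nu-1)}$ shows that $b_k>0$ for $k\ge1$ and that, precisely because $\nu\ge1$, the sequence $(b_k)$ is nondecreasing; hence $\max_{1\le k\le n-1}b_k=b_{n-1}$.

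Next, I would compare $J_n$ with the band matrix $T_n:=\sqrt{b_{n-1}}\,(E+E^{T})$, where $E$ is the order-$n$ upper shift. Since $0\le\sqrt{b_k}\le\sqrt{b_{n-1}}$, for any $v\in\mathbb R^n$, setting $|v|=(|v_1|,\dots,|v_n|)$,
\[
v^{T}J_nv=2\sum_{k=1}^{n-1}\sqrt{b_k}\,v_kv_{k+1}\le 2\sqrt{b_{n-1}}\sum_{k=1}^{n-1}|v_k|\,|v_{k+1}|=|v|^{T}T_n|v| ,
\]
and since $|v|$ has the same Euclidean norm as $v$, the Rayleigh quotient characterization gives $\lambda_{\max}(J_n)\le\lambda_{\max}(T_n)$. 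The eigenvalues of $E+E^{T}$ (order $n$, zero diagonal, ones on the two adjacent bands) are the classical numbers $2\cos\frac{j\pi}{n+1}$, $j=1,\dots,n$, with eigenvector $k\mapsto\sin\frac{jk\pi}{n+1}$, so $\lambda_{\max}(T_n)=2\sqrt{b_{n-1}}\cos\frac{\pi}{n+1}$. Finally,
\[
2\sqrt{b_{n-1}}=\sqrt{4b_{n-1}}=\sqrt{\frac{(n-1)(n+2\nu-2)}{(n+\nu-1)(n+\nu-2)}},
\]
and chaining the last three displays yields exactly $x_{n1}(\nu)\le\sqrt{\frac{(n-1)(n+2\nu-2)}{(n+\nu-2)(n+\nu-1)}}\cos\frac{\pi}{n+1}$. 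The hypotheses are used sharply: $\nu\ge1$ is what makes $b_k$ monotone, so that $b_{n-1}$ is the largest band entry, and $n\ge2$ guarantees both that $J_n$ has a nonempty off-diagonal and that $(n+\nu-1)(n+\nu-2)>0$.

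The only nonroutine ingredients are the identification of $x_{n1}(\nu)$ with $\lambda_{\max}(J_n)$ --- a standard fact in the theory of orthogonal polynomials and Jacobi matrices --- and the spectrum of the tridiagonal Toeplitz matrix $E+E^{T}$; the rest is a one-line Rayleigh-quotient estimate (the only subtlety there being the passage to $|v|$, needed because $v_kv_{k+1}$ may be negative). I expect the main difficulty to be purely bookkeeping: keeping the order of all matrices equal to $n$, so that the Chebyshev eigenvalue comes out as $\cos\frac{\pi}{n+1}$ and not $\cos\frac{\pi}{n}$, and reading $b_{n-1}$ off the recursion correctly. I would also note that this is the same inequality Area, Dimitrov, Godoy and Ronveaux obtain from non-negative connection coefficients between $C_n^\nu$ and a rescaled Chebyshev polynomial of the second kind --- the argument above being just the spectral incarnation of that expansion.
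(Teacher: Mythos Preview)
Your argument is correct, but note that the paper does not give its own proof of this lemma: it is simply quoted as Corollary~2.3 of Area, Dimitrov, Godoy and Ronveaux \cite{Area2004}, so there is no in-paper proof to compare against.

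As you already remark, the cited source obtains the bound via nonnegative connection coefficients between $C_n^\nu$ and a dilated Chebyshev polynomial $U_n$, invoking the principle that when all connection coefficients are nonnegative the largest zero of the expanded polynomial cannot exceed the largest zero of the target family. Your Jacobi-matrix proof is a spectral reformulation of the same phenomenon: the monotonicity of $b_k$ for $\nu\ge 1$ is precisely the Askey-type hypothesis that forces those connection coefficients to be nonnegative, and your comparison matrix $T_n=\sqrt{b_{n-1}}(E+E^T)$ is nothing but the Jacobi matrix of the rescaled $U_n$. The spectral route has the advantage of being entirely self-contained---only the Rayleigh-quotient inequality and the explicit tridiagonal Toeplitz spectrum are needed---whereas the connection-coefficient argument extends more readily when one wants to compare with target families other than Chebyshev. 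Both approaches use $\nu\ge 1$ at exactly the same point (monotonicity of the recurrence coefficients / nonnegativity of the connection coefficients), so neither yields anything for $0<\nu<1$.
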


The next lemma is well-known and it is valid for many other orthogonal polynomials.
\begin{lemma}[Olver et al. \cite{Olver2010}]\label{localmax}
Denote by $y_{nk}(\nu)$, $k=0,1,\cdots ,n-1,n$, the local maxima of $|C_n^\nu(x)|$ enumerated in decreasing order, then $y_{n0}(\nu)=1, y_{nn}(\nu)=-1$, and we have
\begin{enumerate}
\item[$(a)$]\quad
   $y_{nk}(\nu)=x_{n-1,k}(\nu+1),\ k=1,\cdots, n-1.$
\item [$(b)$]\quad
$|C_n^\nu(y_{n0}(\nu))|>|C_n^\nu(y_{n1}(\nu))|>\cdots>|C_n^\nu(y_{n,[\frac{n+1}{2}]}(\nu))|.$
\item[$(c)$]\quad
$(C_n^\nu)^{(k)}(x)>0$ on $(x_{n1}(\nu),1)$ for all $k=0,1,\cdots, n.$
\end{enumerate}
\end{lemma}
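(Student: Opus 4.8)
The plan is to treat this as the classical statement it is (cf.\ Szeg\H{o} \cite{Szego1975} and \cite{Olver2010}) and to recover the three parts from just two ingredients: the derivative formula \eqref{201} together with Rolle's theorem (for (a) and (c)), and a Sonin--P\'olya type monotonicity built from the differential equation \eqref{DE} (for (b)). I would begin by recording the iterated form of \eqref{201}, namely $(C_n^\nu)^{(j)}=c_j\,C_{n-j}^{\nu+j}$ with $c_j=2^j\nu(\nu+1)\cdots(\nu+j-1)>0$ for $0\le j\le n$; in particular each $(C_n^\nu)^{(j)}$ has exactly $n-j$ simple zeros, all in $(-1,1)$, has positive leading coefficient, and is positive at $x=1$.

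For part (a) I would argue that the interior critical points of $y:=C_n^\nu$ are exactly the $n-1$ zeros of $(C_n^\nu)'=2\nu C_{n-1}^{\nu+1}$, i.e.\ the points $x_{n-1,k}(\nu+1)$, $k=1,\dots,n-1$. Since $y$ has $n$ simple zeros in $(-1,1)$, hence $n-1$ gaps, and there are exactly $n-1$ interior critical points, each gap carries exactly one critical point, at which $|y|$ rises from $0$ and returns to $0$ and so has a local maximum. On $(x_{n1}(\nu),1)$ there is neither a zero of $y$ nor (by Rolle, since the largest critical point lies between the two largest zeros) a critical point, so $y$ is monotone there, and being positive at $x=1$ it is increasing, so $x=1$ is a local maximum of $|y|$; symmetrically so is $x=-1$. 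These $n+1$ points exhaust the local maxima, and ordering them gives $y_{n0}=1>y_{n1}=x_{n-1,1}(\nu+1)>\cdots>y_{n,n-1}=x_{n-1,n-1}(\nu+1)>y_{nn}=-1$.

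For part (b) I would introduce $\Phi(x)=(1-x^2)\big(y'(x)\big)^2+\lambda_n y(x)^2$ with $\lambda_n=n(n+2\nu)$, multiply the equation for $C_n^\nu$ by $2y'$, and use $(1-x^2)y''=(2\nu+1)xy'-\lambda_n y$ to reach the clean identity $\Phi'(x)=4\nu x\big(y'(x)\big)^2$, which is $\ge 0$ on $[0,1]$ and strictly so away from isolated points; hence $\Phi$ is strictly increasing on $[0,1]$. Evaluating, $\Phi(1)=\lambda_n y(1)^2$ and $\Phi(y_{nk})=\lambda_n y(y_{nk})^2$ at each interior critical point $y_{nk}\in[0,1)$ (where $y'=0$); since $1>y_{n1}>y_{n2}>\cdots\ge 0$ are the nonnegative local maxima (indices $1\le k\le[\tfrac{n+1}{2}]$, folding in the central extremum for odd $n$ via the parity $y(-x)=(-1)^n y(x)$), monotonicity of $\Phi$ forces $|y(y_{n0})|>|y(y_{n1})|>|y(y_{n2})|>\cdots$.

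For part (c) I would use the iterated formula once more: $(C_n^\nu)^{(j)}=c_j C_{n-j}^{\nu+j}$ has largest zero $x_{n-j,1}(\nu+j)$, and Rolle places the largest zero of a derivative strictly to the left of the largest zero of the function, so $x_{n-j,1}(\nu+j)<x_{n-j+1,1}(\nu+j-1)<\cdots<x_{n1}(\nu)$ for every $j\ge 1$; hence none of $(C_n^\nu)^{(j)}$, $0\le j\le n$, vanishes on $(x_{n1}(\nu),1)$, and since each is positive at $x=1$ it is positive throughout. I expect the only mildly delicate points to be the exact computation giving $\Phi'=4\nu x(y')^2$ in (b) and the index bookkeeping around the central extremum when $n$ is odd; parts (a) and (c) should be essentially immediate once the iterated form of \eqref{201} and the interlacing of zeros via Rolle are in hand.
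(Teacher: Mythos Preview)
The paper does not give its own proof of this lemma; it is quoted from the NIST handbook \cite{Olver2010} as a well-known classical fact about orthogonal polynomials and is used without argument. Your proposal supplies precisely the standard proof---Rolle interlacing from the iterated derivative formula \eqref{201} for parts (a) and (c), and the Sonin--P\'olya functional $\Phi(x)=(1-x^2)(y')^2+\lambda_n y^2$ with $\Phi'=4\nu x(y')^2\ge 0$ on $[0,1]$ for part (b)---and the computations check. One small bookkeeping remark on (b): for odd $n$ the parity $y(-x)=-y(x)$ gives $y_{n,(n+1)/2}=-y_{n,(n-1)/2}$ and hence $|y(y_{n,(n+1)/2})|=|y(y_{n,(n-1)/2})|$, so the very last comparison in the chain is an equality rather than strict. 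This is a quirk of the index range $[\tfrac{n+1}{2}]$ as stated in the lemma (the strict chain genuinely holds down to index $[\tfrac{n}{2}]$), not a defect in your argument; your $\Phi$-monotonicity correctly yields strict inequalities among the nonnegative local maxima, which is all that is used later in the proof of Lemma~\ref{ptwise}.
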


\begin{proof}[Proof of Lemma~\ref{minFk}]
 Direct computation by Matlab shows that the lemma holds for $8\le k\le 200$, so in what follows we may assume $k>200$. . By Lemma~\ref{xn1} and \eqref{201}, we know that the minimum of $\widetilde{F}_k'$ on $[0,1]$ is achieved at the point
\begin{align}\label{minpt}
    x_{k-2,1}(\frac{9}{2})\le \sqrt{\frac{(k-3)(k+5)}{(k+\frac{3}{2})(k+\frac{1}{2})}}\cos(\frac{\pi}{k-1})< 1-\frac{12.5}{k^2}.
\end{align}

 Taking $N=4$ in Lemma~\ref{lem33}, we obtain
    \begin{align}\label{**}
     \Tilde{F}_k'(\cos{\zeta})
     =F_{k-1}^{\frac{7}{2}}(\cos{\zeta})
     &=48\sqrt{\frac{2}{\pi}}\left(\sum_{m=0}^{3}t_m(3)\frac{\Gamma(k)}{\Gamma(k+m+\frac{7}{2})}\frac{\cos{(\delta_{k-1,m})}}{\sin^{m+\frac{7}{2}}{\zeta}}+\widetilde{R}\right),
\end{align}
where $\widetilde{R}$ satisfies
\begin{align}\label{R}
    |\widetilde{R}|\le t_4(3) \frac{\Gamma(k)}{\Gamma(k+\frac{15}{2})}(\sin{\zeta})^{-\frac{15}{2}}\cdot
\begin{cases}
\sec{\zeta}&\text{ if }0<\zeta\leq\frac{\pi}{4},\\
2\sin{\zeta}&\text{ if }\frac{\pi}{4}<\zeta<\frac{\pi}{2},
\end{cases}
\end{align}
the value of $t_m(3)$ for $0\le m\le 3$ are listed below:
$$t_0(3)=1,t_1(3)=\frac{35}{8}, t_2(3)=\frac{945}{128}, t_3(3)=\frac{3465}{1024}, t_4(3)= -\frac{45045}{32768}.$$

Let $\sin{\zeta}=\frac{l}{k}$. Then by \eqref{minpt} we can assume $l\ge 5$. From \eqref{R} we know that if $l\le \frac{k}{\sqrt{2}}$, then
\begin{align}\label{tilde R1}
    |\widetilde{R}|\le |t_4(3)|\frac{k^\frac{15}{2}\Gamma(k)}{l^\frac{15}{2}\Gamma(k+\frac{15}{2})}\frac{1}{\sqrt{1-\frac{l^2}{k^2}}}
    < \frac{1.5}{l^\frac{15}{2}\sqrt{1-\frac{l^2}{k^2}}};
\end{align}
while if $l> \frac{k}{\sqrt{2}}$, then
\begin{align}\label{tiled R2}
    |\widetilde{R}|\le 2|t_4(3)|\frac{k^\frac{15}{2}\Gamma(k)}{l^\frac{15}{2}\Gamma(k+\frac{9}{2})}
    <  \frac{3(\sqrt{2})^\frac{15}{2}}{k^\frac{15}{2}}.
\end{align}
To get the desired lower bound, we shall use the following simple estimates.
\begin{equation}\label{cos}
    \cos(x+\delta)=\cos x-\delta \sin(x+h\delta)\ge \cos x-|\delta|.
\end{equation}
\begin{equation}\label{sin}
    \zeta-\sin\zeta\le (\frac{\pi}{2}-1) \sin^3{\zeta} \le \sin^3{\zeta}, \ 0<\zeta<\frac{\pi}{2}.
\end{equation}
 With the help of \eqref{cos} and \eqref{sin}, we have
 \begin{align}
   \cos{(\delta_{k-1,m})}
    &=\cos{\left((k+\frac{5}{2}+m)\zeta-(\frac{7}{2}-m)\frac{\pi}{2}\right)}\notag\\
    &=\cos\left((k+\frac{5}{2}+m)\frac{l}{k}+(k+\frac{5}{2}+m)(\zeta-\sin\zeta)-(\frac{7}{2}-m)\frac{\pi}{2}\right)\notag\\
    &\ge \cos\left(l-(\frac{7}{2}-m)\frac{\pi}{2}\right)-\left((k+\frac{5}{2}+m)(\zeta-\sin\zeta)+(\frac{5}{2}+m)\frac{l}{k}\right)\notag\\
    &\ge \cos\left(l-(\frac{7}{2}-m)\frac{\pi}{2}\right)-(3+m)\frac{l}{k}.\label{cosdelta}
\end{align}
Therefore we have
\begin{align*}
 &\sum_{m=0}^{3}t_m(3)\frac{\Gamma(k)}{\Gamma(k+m+\frac{7}{2})}\frac{\cos{(\delta_{k-1,m})}}{\sin^{m+\frac{7}{2}}{\zeta}}=\sum_{m=0}^{3}t_m(3)\frac{k^{m+\frac{7}{2}}\Gamma(k)}{\Gamma(k+m+\frac{7}{2})}\frac{\cos{(\delta_{k-1,m})}}{l^{m+\frac{7}{2}}}\\
\ge&\ \frac{k^{\frac{13}{2}}\Gamma(k)}{\Gamma(k+\frac{13}{2})}\sum_{m=0}^{3}t_m(3)\frac{(k+\frac{7}{2}+m)_{3-m}}{k^{3-m}l^{m+\frac{7}{2}}}\left(\cos\left(l-(\frac{7}{2}-m)\frac{\pi}{2}\right)-(3+m)\frac{l}{k}\right)\\
\ge &\min\left\{(1-\frac{16}{k})\sum_{m=0}^{3}t_m(3)\frac{(k+\frac{7}{2}+m)_{3-m}}{k^{3-m}l^{m+\frac{7}{2}}}\left(\cos\left(l-(\frac{7}{2}-m)\frac{\pi}{2}\right)-(3+m)\frac{l}{k}\right),0\right\}.
\end{align*}
Write $$(1-\frac{16}{k})\sum_{m=0}^{3}t_m(3)\frac{(k+\frac{7}{2}+m)_{3-m}}{k^{3-m}l^{m+\frac{7}{2}}}\left(\cos\left(l-(\frac{7}{2}-m)\frac{\pi}{2}\right)-(3+m)\frac{l}{k}\right)=\sum_{i=0}^4E_i,$$
where
\begin{align*}
    E_0=\frac{1024 l^3 \cos \left(l+\frac{\pi }{4}\right)-1920 l^2 \cos \left(l-\frac{\pi }{4}\right)-840 l \cos \left(l+\frac{\pi }{4}\right)-315 \cos \left(l-\frac{\pi }{4}\right)}{1024 l^{13/2}},
\end{align*}

\begin{align*}
    E_1=\frac{-3 \left(512 l^3+1280 l^2-2304 l^2 \cos \left(l+\frac{\pi }{4}\right)+700 l-1920 l \cos \left(l-\frac{\pi }{4}\right)+770 \cos \left(l+\frac{\pi }{4}\right)-315\right)}{512 k l^{11/2}},
\end{align*}

\begin{align*}
    E_2=\frac{-10368 l^2+11520 l+15296 l \cos \left(l+\frac{\pi }{4}\right)+64920 \cos \left(l-\frac{\pi }{4}\right)-5775}{256 k^2 l^{9/2}},
\end{align*}

\begin{align*}
    E_3=\frac{-3 \left(478 l^2-2705 l-231 l \cos \left(l+\frac{\pi }{4}\right)-1980 \cos \left(l-\frac{\pi }{4}\right)\right)}{8 k^3 l^{9/2}}
\end{align*}

\begin{align*}
    E_4=\frac{297 (-7 l+80)}{8 k^4 l^{7/2}}.
\end{align*}

If $5\le l\le 6.5$, then $E_0\ge -0.0002$, $E_1\ge -0.00025$, $E_2\ge  -1.5\times 10^{-5}$, $E_3\ge -10^{-7}$, $E_4\ge 0$. By \eqref{tilde R1}, $|\widetilde R|\le 8\times 10^{-6}$. Therefore from \eqref{**} we have
\begin{align*}
    \Tilde{F}_k'(\cos{\zeta})\ge -48\sqrt{\frac{2}{\pi}}\times 0.005\ge -0.04.
\end{align*}

If $l>6.5$, then $E_0\ge -0.00077$, $E_1\ge -0.0002$, $E_2\ge  -10^{-5}$, $E_3\ge -10^{-7}$, $E_4\ge -10^{-9}$. Either \eqref{tilde R1} or \eqref{tiled R2} implies $|\widetilde R|\le 3\times 10^{-7}$, so we also have
\begin{align*}
    \Tilde{F}_k'(\cos{\zeta})\ge -48\sqrt{\frac{2}{\pi}}\times 0.01\ge -0.04.
\end{align*}
Thus the lemma is proved.
\end{proof}

\begin{proof}[Proof of Lemma~\ref{ptwise}]
We first prove the following estimate at one point:
    \begin{equation}\label{pt}
        0.3\le\Tilde{F}_k'(1-\frac{8}{\lambda_k})\le 0.33, \quad k\ge 6.
    \end{equation}
 Direct computation by Matlab shows that \eqref{pt} holds for $6\le k\le 100$, so in what follows we may assume $k>100$. The main tool we use is the hypergeometric expansion \eqref{hyper odd} and \eqref{hyper even}. We will prove \eqref{pt} only for even $k$, and the case for odd $k$ is similar.

 Let $k=2m+2$, then $\Tilde{F}_k'=F_{k-1}^\frac{7}{2}$, so by \eqref{hyper odd},
 \begin{align*}
     \Tilde{F}_k'(1-\frac{8}{\lambda_k})=(1-\frac{8}{\lambda_k}) {_2}F_1(-m,m+\frac{9}{2};4;t),
 \end{align*}
where $t=1-(1-\frac{8}{\lambda_k})^2=\frac{8}{\lambda_k}(2-\frac{8}{\lambda_k})$. Now we write
\begin{align*}
    {_2}F_1(-m,m+\frac{9}{2};4;t)=\sum_{i=0}^m(-1)^i\gamma_it^i,
\end{align*}
where $\gamma_i=\frac{(m-i+1)_i(m+\frac{7}{2})_i}{i!(4)_i}$. It is easy to see that
\begin{align*}
    \min\limits_{1\le i< m}\{ \frac{\gamma_{i}}{\gamma_{i+1}}\}= \frac{\gamma_{1}}{\gamma_{2}}=\frac{10}{(m-1)(m+\frac{9}{2})}=\frac{40}{(k-3)(k+7)}>t.
\end{align*}
Therefore
\begin{align*}
    \sum_{i=0}^{j_1 \text{ is odd}}(-1)^i\gamma_it^i\le {_2}F_1(-m,m+\frac{9}{2};4;t)\le \sum_{i=0}^{j_2 \text{ is even}}(-1)^i\gamma_it^i
\end{align*}
Take $j_1=5, j_2=6$, then direct computation shows that \eqref{pt} holds since $m\ge 50$.

Now in view of Lemma~\ref{minFk}, we see that $\Tilde{F}_k'(1-\frac{8}{\lambda_k})>-\min\limits_{0\le x\le 1} \Tilde{F}_k'(x)$. Then by Lemma~\ref{localmax} $(b)$,  $\Tilde{F}_k'(1-\frac{8}{\lambda_k})\ge \Tilde{F}_k'(x)$ for all $0\le x\le 1-\frac{8}{\lambda_k}$. Moreover, the convexity of $\widetilde{F}_k'(x)$ on $[1-\frac{8}{\lambda_k}, 1]$ is guaranteed by Lemma~\ref{localmax} $(c)$. This completes the proof of Lemma \ref{ptwise}.
\end{proof}

\section{proof of Lemma \ref{nu>1}}\label{appendix cancel}
We first prove a simple lemma, which enables us to focus on the region near $x=1$. By letting $x=\cos\theta$, we introduce the function $v(\theta)=(\sin\theta)^2 F_n^\nu(\cos\theta)$ in this appendix.
\begin{lemma}\label{increase local max}
For $n\ge 2$ and $\nu>0$, let $v(\theta)$ be defined as above. If $\nu\ge 2$, then the successive relative maxima of $|v(\theta)|$ form an increasing sequence as $\theta$ decreases from $\frac{\pi}{2}$ to $0$.
\end{lemma}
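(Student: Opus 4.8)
I would prove this by a Sonin--Pólya (Sturm-type) monotonicity argument applied to the second-order ODE satisfied by $v$. The first step is to compute that ODE. Writing $y(\theta)=F_n^\nu(\cos\theta)$, the Gegenbauer equation \eqref{DE} together with $x=\cos\theta$ gives $y''+2\nu\cot\theta\,y'+n(n+2\nu)y=0$; substituting $y=\csc^2\theta\cdot v$ and simplifying (a routine computation) shows that
$$v''+p(\theta)\,v'+q(\theta)\,v=0,\qquad p(\theta)=2(\nu-2)\cot\theta,\qquad q(\theta)=n(n+2\nu)+4\nu-4+(6-4\nu)\csc^2\theta.$$
Since $6-4\nu\le-2<0$ for $\nu\ge2$, the coefficient $q$ is strictly increasing in $\sin^2\theta$. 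Within $(0,\tfrac\pi2]$ there is therefore a unique angle $\theta_\ast$ with $q(\theta_\ast)=0$ (namely $\sin^2\theta_\ast=\tfrac{4\nu-6}{n(n+2\nu)+4\nu-4}$, which lies strictly in $(0,1)$), and $q>0$ exactly on the connected interval $(\theta_\ast,\tfrac\pi2]$; near $\theta=0$ one has $q<0$.

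\textbf{The Sonin function.} On $(\theta_\ast,\tfrac\pi2]$ I would introduce $\Phi(\theta)=v(\theta)^2+v'(\theta)^2/q(\theta)$. Using the ODE in the form $v+v''/q=-\tfrac pq v'$, one gets the identity
$$\Phi'(\theta)=-\frac{2p(\theta)q(\theta)+q'(\theta)}{q(\theta)^2}\,v'(\theta)^2,$$
and a short calculation factors $2pq+q'=2\cot\theta\bigl(2(\nu-2)q+(4\nu-6)\csc^2\theta\bigr)$. On $(\theta_\ast,\tfrac\pi2]$ we have $\cot\theta\ge0$, $q>0$, $\nu-2\ge0$, and $4\nu-6\ge2>0$, so $2pq+q'\ge0$; hence $\Phi'\le0$, i.e.\ $\Phi$ is non-decreasing as $\theta$ decreases. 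This is the single place where the hypothesis $\nu\ge2$ (rather than merely $\nu>0$) is used: for $\nu<2$ the term $2(\nu-2)q$ turns negative and the monotonicity can fail.

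\textbf{Locating the maxima and concluding.} At an interior critical point $\theta_0$ of $v$ with $v(\theta_0)\ne0$, the ODE gives $v''(\theta_0)=-q(\theta_0)v(\theta_0)$; if $|v|$ has a relative maximum there, then $v''(\theta_0)$ has the sign opposite to $v(\theta_0)$, which forces $q(\theta_0)>0$, i.e.\ $\theta_0\in(\theta_\ast,\tfrac\pi2]$. (In the region $q<0$ near $0$, every critical point of $v$ is instead a local minimum of $|v|$, and $|v|$ is increasing out of $\theta=0$ since $v(\theta)\sim\sin^2\theta$, so no maxima are lost.) Consequently all relative maxima $\tfrac\pi2\ge\theta^{(1)}>\theta^{(2)}>\cdots$ of $|v|$ lie in $(\theta_\ast,\tfrac\pi2]$, where $\Phi$ is well defined and non-increasing in $\theta$; since $v'=0$ at each $\theta^{(j)}$ we have $\Phi(\theta^{(j)})=v(\theta^{(j)})^2$, and therefore $v(\theta^{(i+1)})^2=\Phi(\theta^{(i+1)})\ge\Phi(\theta^{(i)})=v(\theta^{(i)})^2$, which is exactly the claimed monotonicity. (If $n$ is odd then $v(\tfrac\pi2)=0$ and $\theta^{(1)}<\tfrac\pi2$; if $n$ is even then $\theta=\tfrac\pi2$ is itself the first relative maximum, covered by the same inequality.)

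\textbf{Main obstacle.} I do not expect a conceptual difficulty; the points requiring care are (i) restricting the Sonin-function argument to the oscillatory region $\{q>0\}$ — legitimate precisely because every relative maximum of $|v|$ forces $q>0$ there, and because $\{q>0\}\cap(0,\tfrac\pi2]$ is an interval, so consecutive maxima are joined by a segment on which $\Phi$ is smooth — and (ii) carrying out correctly the algebra that produces the explicit $p,q$ and the factorization of $2pq+q'$. Everything else is standard Sturm comparison.
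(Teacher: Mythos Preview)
Your proposal is correct and follows essentially the same Sonin--P\'olya argument as the paper: derive the second-order ODE for $v$, observe that $q$ has a single sign change on $(0,\tfrac{\pi}{2}]$ so that all relative maxima of $|v|$ lie in the oscillatory region $\{q>0\}$, and then use the monotonicity of the energy $\Phi=v^2+(v')^2/q$ there. Your explicit coefficients $p(\theta)=2(\nu-2)\cot\theta$ and $q(\theta)=n(n+2\nu)+4\nu-4+(6-4\nu)\csc^2\theta$ are in fact the correct ones (the paper's printed $p,q$ contain typographical errors), and your factorization of $2pq+q'$ makes the sign check slightly cleaner than the paper's version.
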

\begin{proof}[Proof of Lemma \ref{increase local max}] By \eqref{DE} it is straightforward to check that $v$ satisfies the equation
\begin{equation*}
    v''(\theta)+p(\theta)v'(\theta)+q(\theta)=0,
\end{equation*}
where $p(\theta)=(2\nu-4)\cos\theta$, and $q(\theta)=(n^2+2\nu n+4)-\frac{2}{\sin^2\theta}+(4\nu-8)(\sin\theta-\frac{1}{\sin\theta})$. Since $\nu>2$, we know that $p\ge 0$, $q$ is increasing and $q$ has a unique zero $\widetilde\theta$  in $(0,\frac{\pi}{2})$ .

Since $v(0)=0, v'>0$ near $0$, and $q(\theta)<0$ in $(0,\widetilde\theta)$, by the maximum principle, it's easy to see that $|v(\theta)|$ has no local maxima in $(0,\widetilde\theta]$. Now we consider the case when $\theta\in (\widetilde\theta, \frac{\pi}{2}]$. Let $\Tilde q=q^{-1}$, then $\Tilde q>0$ is strictly decreasing in $(\widetilde\theta, \frac{\pi}{2}]$. Introducing
\begin{equation*}
    f(\theta)=v^2(\theta)+\Tilde q(\theta)(v')^2(\theta),
\end{equation*}
we have
\begin{equation*}
    f'=\Tilde q'(v')^2+2v'(\Tilde{q}v''+v)=(\Tilde{q}'-2p\Tilde{q})u'^2<0.
\end{equation*}
    But  $f(\theta)=v^2(\theta)$ if $v'(\theta)=0$, so the lemma is proved.
    \end{proof}

\begin{proof}[Proof of Lemma \ref{nu>1}] In view of Lemma \ref{increase local max}, we need to find a bound for $\theta_*$, the smallest zero of $v'(\theta)$ in $(0, \frac{\pi}{2})$. By definition of $v$ and \eqref{derivative},
\begin{align*}
    v'(\theta)&=\sin\theta\left(2\cos\theta F_n^\nu(\cos\theta)-\sin^2\theta (F_n^\nu)'(\cos\theta)\right)\\
    &=\sin\theta\left(2 \cos\theta F_n^\nu(\cos\theta)-  \frac{n(n+2\nu)}{2\nu+1}\sin^2\theta F_{n-1}^{\nu+1}(\cos\theta)\right).
\end{align*}
We claim that when $\theta=\overline\theta=\arcsin{\sqrt{\frac{4\nu+2}{n(n+2\nu)}}}$,
\begin{align}\label{v'-}
     v'(\overline{\theta})=2\sin\overline{\theta}\left(\cos \overline\theta F_{n}^\nu(\cos\overline\theta)-F_{n-1}^{\nu+1}(\cos\overline\theta)\right)<0.
\end{align}
We will use the hypergeometric function expansion for Gegenbauer polynomials \eqref{hyper odd} and \eqref{hyper even} to prove \eqref{v'-}. We only give the proof
 for odd $n$, and the proof
 for even $n$ is similar.

Write $n=2m+1$. By Lemma \ref{xn1}, it is not difficult to show $\cos \overline\theta>x_{2m+1,1}(\nu)$, hence $F_{2m+1}^{\nu}(\cos \overline\theta)>0$, so we have
    \begin{align*}
      F_{2m}^{\nu+1}(\cos\overline\theta)-\cos\overline\theta F_{2m+1}^\nu(\cos\overline\theta)
      &\ge {_2}F_1(-m,m+\nu+1; \nu+\frac{3}{2}; \sin^2\overline\theta)- {_2}F_1(-m,m+\nu+1; \nu+\frac{1}{2}; \sin^2\overline\theta)\\
      &=\sum_{k=1}^m (-1)^{k+1} \alpha_k(\sin^2\overline\theta)^{k},
    \end{align*}
    where $\alpha_k=\frac{(m-k+1)_k(m+\nu+1)_k}{(k-1)!(\nu+\frac{1}{2})_{k+1}}$. We compute
    \begin{align*}
        \frac{\alpha_{k}}{\alpha_{k+1}}=\frac{k(k+\nu+\frac{3}{2})}{(m-k)(m+\nu+k+1)}.
    \end{align*}
    It is then easy to see that $$\min\limits_{1\le k< m}\{ \frac{\alpha_{k}}{\alpha_{k+1}}\}= \frac{\alpha_{1}}{\alpha_{2}}=\frac{\nu+\frac{5}{2}}{(m-1)(m+\nu+2)}.$$
    Since $\sin^2\overline\theta=\frac{4\nu+2}{n(n+2\nu)}=\frac{4\nu+2}{(2m+1)(2m+2\nu+1)}<\frac{\nu+\frac{5}{2}}{(m-1)(m+\nu+2)}$, no matter $m$ is even or odd,  we have
\begin{align*}
     F_{2m}^{\nu+1}(\cos\overline\theta)-\cos \overline\theta F_{2m+1}^\nu(\cos\overline\theta)\ge \sum_{\substack{1\le k\le m\\k \text{ is odd}}} (\sin^2\overline\theta)^k (\alpha_k-\alpha_{k+1}\sin^2\overline\theta)>0,
\end{align*}
    where $\alpha_{m+1}=0 $ is understood, so \eqref{v'-} holds. Consequently, since $v'(\theta)>0$ when $\theta$ is small, from \eqref{v'-} we know that $\theta_*<\overline\theta$.

Now we look for a lower bound of $\theta_*$. Let $\underline\theta=\arcsin\sqrt{\frac{4\nu+2}{n(n+2\nu)}\delta}$, where $0<\delta<1$ is to be determined. We want to show that
\begin{align}\label{v'+}
v'(\theta)=2\sin\theta\left( \cos\theta F_n^\nu(\cos\theta)-\delta F_{n-1}^{\nu+1}(\cos\theta)\right)>0
\end{align}
for all $0\le \theta< \underline\theta.$
As before, we only consider the case $n=2m+1$, then we can write
\begin{align*}
    \cos\theta F_n^\nu(\cos\theta)-\delta F_{n-1}^{\nu+1}(\cos\theta)=\sum_{k=0}^m(-1)^k\beta_k(\sin^2\theta)^k,
\end{align*}
where
\begin{equation*}
    \beta_k=\frac{(m-k+1)_{k}(m+\nu+1)_{k}}{k!(\nu+\frac{1}{2})_{k+1}}\left((\nu+\frac{1}{2}+k)\cos^2\theta)-\delta(\nu+\frac{1}{2}) \right).
\end{equation*}
We compute
\begin{align*}
    \frac{\beta_k}{\beta_{k+1}}=\frac{(k+1)(\nu+\frac{1}{2}+k+1)}{(m-k)(m+\nu+k+1)}\frac{(\nu+\frac{1}{2}+k)\cos^2\theta-\delta(\nu+\frac{1}{2}) }{(\nu+\frac{3}{2}+k)\cos^2\theta-\delta(\nu+\frac{1}{2}) },
\end{align*}
so
\begin{align*}
    \min\limits_{0\le k< m}\{ \frac{\beta_{k}}{\beta_{k+1}}\}= \frac{\beta_{0}}{\beta_{1}}=\frac{\nu+\frac{3}{2}}{m(m+\nu+1)}\frac{(\nu+\frac{1}{2})\cos^2\theta-\delta(\nu+\frac{1}{2}) }{(\nu+\frac{3}{2})\cos^2\theta-\delta(\nu+\frac{1}{2}) }.
\end{align*}
Therefore to prove \eqref{v'+}, it is enough to show $\frac{\beta_0}{\beta_1}>\sin^2\theta$, or equivalently
\begin{align*}
    (\nu+\frac{3}{2})(\nu+\frac{1}{2})(\cos^2\theta-\delta)>m(m+\nu+1)\left((\nu+\frac{3}{2})\cos^2\theta-\delta(\nu+\frac{1}{2})\right)\sin^2\theta.
\end{align*}
This is a quadratic inequality about $\sin^2\theta$. If we choose
\begin{align}\label{delta}
    \delta=\frac{\nu-\sqrt{\nu}+\frac{1}{2}}{\nu+\frac{1}{2}},
\end{align}
    then since we have assumed that $n\ge 2\nu+2$, direct computation shows that it is enough to prove the above inequality for $\theta=\underline\theta$, which reduces to
\begin{align*}
    (\nu+\frac{3}{2})(\cos^2\underline\theta-\delta)>m(m+\nu+1)\left((\nu+\frac{3}{2})\cos^2\underline\theta-\delta(\nu+\frac{1}{2})\right)\frac{4\delta}{n(n+2\nu)}.
\end{align*}
Since $\frac{4m(m+\nu+1)}{n(n+2\nu)}=\frac{(n-1)(n+2\nu+1)}{n(n+2\nu)}<1$, we only need to show
\begin{align*}
    \cos^2\underline\theta-\delta>\delta\left(\cos^2\underline\theta-\frac{\nu+\frac{1}{2}}{\nu+\frac{3}{2}}\delta\right),
\end{align*}
which is easy to verify, so we omit the details.

From \eqref{v'-} and \eqref{v'+}, we have $\underline\theta<\theta_*<\overline{\theta}$, so
\begin{align}\label{theta*}
    |v(\theta_*)|=|\sin^2\theta_* F_n^\nu(\theta_*)|\le |\sin^2\overline\theta F_n^\nu(\underline\theta)|=\frac{4\nu+2}{n(n+2\nu)}F_n^\nu(\underline\theta).
\end{align}
It remains to give an upper bound for $F_n^\nu(\underline\theta)$. Let $n=2m+1$, then
\begin{align*}
    F_n^\nu(\underline\theta)&=\cos\underline\theta\sum_{k=0}^m\frac{(-1)^k(m-k+1)_{k}(m+\nu+1)_{k}}{k!(\nu+\frac{1}{2})_{k}}\sin^{2k}\underline\theta\\
    &\le \sum_{k=0}^{l \text{ is even}}\frac{(-1)^k(m-k+1)_{k}(m+\nu+1)_{k}}{k!(\nu+\frac{1}{2})_{k}}\sin^{2k}\underline\theta.
\end{align*}
For $m\ge 5$, we can choose $l=4$ to obtain
\begin{align}\label{up theta-}
    F_n^\nu(\underline\theta)&\le \sum_{k=0}^{4}\frac{(-1)^k(m-k+1)_{k}(m+\nu+1)_{k}}{k!(\nu+\frac{1}{2})_{k}}\left(\frac{4\nu+2}{n(n+2\nu)}\right)^k\delta^k\notag\\
    &=\sum_{k=0}^{4}\frac{(-1)^k(m-k+1)_{k}(m+\nu+1)_{k}}{k!(\nu+\frac{1}{2})_{k}}\left(\frac{\nu-\sqrt{\nu}+\frac{1}{2}}{(m+\frac{1}{2})(m+\nu+\frac{1}{2})}\right)^k
\end{align}
Direct computation shows that for fixed $\nu$, then above expression, viewed as a function of $m>\nu$, is decreasing in $m$. Therefore if $n\ge \max\{2\nu+2, 12\}$, \eqref{theta*} and \eqref{up theta-} together imply that
\begin{align*}
    |v(\theta_*)|\le \frac{\widetilde C_\nu}{n(n+2\nu)},
\end{align*}
where
\begin{equation}\label{Cnu}
    \widetilde C_\nu=\left\{\begin{aligned} &(4\nu+2)\sum_{k=0}^{4}\frac{(-1)^k(6-k)_{k}(6+\nu)_{k}}{k!(\nu+\frac{1}{2})_{k}}\left(\frac{\nu-\sqrt{\nu}+\frac{1}{2}}{\frac{11}{2}(\nu+\frac{11}{2})}\right)^k, &\text{if $\nu<5$},\\
    &(4\nu+2)\sum_{k=0}^{4}\frac{(-1)^k(\nu-k+1)_{k}(2\nu+1)_{k}}{k!(\nu+\frac{1}{2})_{k}}\left(\frac{\nu-\sqrt{\nu}+\frac{1}{2}}{(\nu+\frac{1}{2})(2\nu+\frac{1}{2})}\right)^k, &\text{if $\nu\ge5$}.
    \end{aligned}\right.
\end{equation}

We remark that same estimates holds for even $n$. Finally, since
\begin{equation*}
    |v(\frac{\pi}{2})|=F_n^\nu(0)=\left\{ \begin{aligned}
        &0, &\text {if $n$ is odd},\\
        &\frac{\Gamma(\frac{n}{2}+\nu)}{\Gamma(\nu)(\frac{n}{2})!}\Big/\frac{\Gamma(n+2\nu)}{\Gamma(2\nu)n!}=\frac{2\Gamma(\nu+\frac{1}{2})\Gamma(\frac{n+3}{2})}{(n+1)\sqrt{\pi}\Gamma(\frac{n+1}{2}+\nu)},&\text{ if $n$ is even},
    \end{aligned}\right.
\end{equation*}
we conclude that
\begin{equation*}
    |v(\theta)|\le \max\{|v(\theta_*)|, |v(\frac{\pi}{2})|\}\le \frac{\widetilde C_\nu}{n(n+2\nu)}.
\end{equation*}
\end{proof}

\section{proof of Proposition \ref{lambda=1}}\label{appendix lambda==1}
\begin{proof}[Proof of Proposition \ref{lambda=1}] If $k=2$ or $4$, then by Lemma \ref{A2-5}, one can check the proposition holds true for all $n\ge 6$ directly, so in what follows we may assume $k\ge 6$.

We first consider the case when $n\ge 65$. Recall that $d=8$, $b=0.33$ are given in Theorem \ref{bk},  $d_0=17$ ,and $B_k=\frac{9\alpha^2}{32}(\lambda_{n+1}-\lambda_k+\frac{11}{7\alpha})(2k+5)$, so we have
\begin{equation}\label{quotient}
    \frac{B_{k+1}-B_k}{B_{k+1}+B_k}=\frac{ \left(n^2+7n-3k^2-18 k-15\right)+\frac{11}{7\alpha}}{(k+3) \left( \left(2 n^2+14 n-2 k^2-12 k+5\right)+\frac{22}{7\alpha}\right)}.
\end{equation}

\noindent Case $1$:  $\lambda_7\le \lambda_{k+1}\le
 \frac{d\lambda_{n}}{2d_0}$.
 In this case, $6\le k\le \frac{n}{2}-1$,  hence $B_{k+1}>B_k$, and by \eqref{quotient}, one can show that $\frac{B_{k+1}-B_k}{B_{k+1}+B_k}$ is decreasing in $k$, so we have
 \begin{align}\label{quotient bd1}
     \frac{B_{k+1}-B_k}{B_{k+1}+B_k}\le\frac{\left(n^2+7 n-231\right)+\frac{11}{7\alpha}}{9 \left( \left(2 n^2+14 n-139\right)+\frac{22}{7 \alpha}\right)}<0.054.
 \end{align}
 Moreover, $a\le \frac{d_0}{\lambda_n}\le \frac{d}{2\lambda_ {k}}$. so \eqref{R1} becomes
\begin{align*}
    R_{k,1}&\le B_k\Big((a_+-\frac{\lambda_k}{d}(1-b)a_+^2)^2+(a_--\frac{\lambda_k}{d}(1-b)a_-^2)^2\Big)^2\\
    &+B_{k+1}\Big((a_+-\frac{\lambda_{k+1}}{d}(1-b)a_+^2)^2+(a_--\frac{\lambda_{k+1}}{d}(1-b)a_-^2)^2\Big)^2\\
    &=B_k\Big((2\lambda^2-2\lambda+1)a^2-\frac{2\lambda_k}{d}(1-b)(1-3\lambda+3\lambda^2)a^3+(\frac{\lambda_k}{d}(1-b))^2(\lambda^4+(1-\lambda)^4)a^4\Big)\\
    &+B_{k+1}\Big((2\lambda^2-2\lambda+1)a^2-\frac{2\lambda_{k+1}}{d}(1-b)(1-3\lambda+3\lambda^2)a^3+(\frac{\lambda_{k+1}}{d}(1-b))^2(\lambda^4+(1-\lambda)^4)a^4\Big),
\end{align*}
and \eqref{R3} becomes
\begin{align*}
    R_{k,3}\le 2(B_{k+1}-B_{k})\lambda(1-\lambda)a^2.
\end{align*}
Combined with \eqref{R2}, we can write
\begin{align*}
    f_{k,a}(\lambda)&=B_k\Big((2\lambda^2-2\lambda+1)-\frac{2\lambda_k}{d}(1-b)(1-3\lambda+3\lambda^2)a+(\frac{\lambda_k}{d}(1-b))^2(\lambda^4+(1-\lambda)^4)a^2\Big)\\
    &+B_{k+1}\Big((2\lambda^2-2\lambda+1)-\frac{2\lambda_{k+1}}{d}(1-b)(1-3\lambda+3\lambda^2)a+(\frac{\lambda_{k+1}}{d}(1-b))^2(\lambda^4+(1-\lambda)^4)a^2\Big)\\
    &+(2c_k(B_k+B_{k+1})+2(B_{k+1}-B_{k}))\lambda(1-\lambda).
\end{align*}
For $\frac{1}{2}\le \lambda<1$, direct computation yields
\begin{align*}
    \frac{f_{k,a}(1)-f_{k,a}(\lambda)}{2(\lambda-\lambda^2)}&=B_k\Big(1-\frac{3\lambda_k}{d}(1-b)a+(\frac{\lambda_k}{d}(1-b))^2a^2(\lambda^2-\lambda+2)\Big)\\
    &+B_{k+1}\Big(1-\frac{3\lambda_{k+1}}{d}(1-b)a+(\frac{\lambda_{k+1}}{d}(1-b))^2a^2(\lambda^2-\lambda+2)\Big)
    -c_k(B_{k+1}+B_k)-(B_{k+1}-B_k)\\
    &\ge B_k(1-3w_k+\frac{7}{4}w_k^2)+B_{k+1}(1-3w_{k+1}+\frac{7}{4}w_{k+1}^2)-c_k(B_{k+1}+B_k)-(B_{k+1}-B_k),
\end{align*}
where
\begin{equation*}
    w_j=\frac{\lambda_j}{d}(1-b)a\le \frac{1-b}{2}<\frac{6}{7},\ j=k,k+1.
\end{equation*}
So by Corollary \ref{cn} and \eqref{quotient bd1}
\begin{align*}
    \frac{f_{k,a}(1)-f_{k,a}(\lambda)}{2(\lambda-\lambda^2)}&\ge (\frac{7 b^2+10 b-1}{16} -c_k)(B_k+B_{k+1})-(B_{k+1}-B_k)\\
    &\ge (B_k+B_{k+1})(0.191-0.12-0.054)
    >0.
\end{align*}

\noindent Case $2$: $\frac{\lambda_n}{4}=\frac{d\lambda_n}{2d_0}<\lambda_{k+1}\le \lambda_n$, but $a_-\le\frac{d}{2\lambda_{k+1}}$.
In this case, $\frac{n}{2}-2\le k\le n-1$, $\lambda\ge1-\frac{d}{2\lambda_ka}$, and  we have
\begin{align*}
    R_{k,1}&\le B_k\left((ba_++\frac{d}{4\lambda_k}(1-b))^2+(a_--\frac{\lambda_k}{d}(1-b)a_-^2)^2 \right)\\
    &+B_{k+1}\left((ba_++\frac{d}{4\lambda_{k+1}}(1-b))^2+(a_--\frac{\lambda_{k+1}}{d}(1-b)a_-^2)^2 \right).
\end{align*}
Since the sign of $B_{k+1}-B_k$ is unknown, we need to discuss both cases separately.

If $ B_{k+1}\le B_k$, then by \eqref{quotient},
\begin{align}\label{quotient max}
    \frac{B_k-B_{k+1}}{B_{k}+B_{k+1}}\le \frac{7 \alpha(2 n+5)-11}{(n+2) (21\alpha(2 n+5)+22)}<\frac{1}{3},
\end{align}
and we have
\begin{align*}
    R_{k,3}\le 2(B_{k}-B_{k+1})m_0(1-\lambda)a^2.
\end{align*}

Combined with \eqref{R2}, for $\frac{1}{2}\le \lambda< 1$, we have
\begin{align*}
    \phi(\lambda):=\frac{f_{k,a}(1)-f_{k,a}(\lambda)}{(1-\lambda) a^2}
    &=B_k\left((1+\lambda)b^2+\frac{d}{2\lambda_ka}(1-b)b-(1-\lambda)(1-\frac{\lambda_k}{d}(1-b)(1-\lambda) a)^2 \right)\\
    &+B_{k+1}\left((1+\lambda)b^2+\frac{d}{2\lambda_{k+1}a}(1-b)b-(1-\lambda)(1-\frac{\lambda_{k+1}}{d}(1-b)(1-\lambda) a)^2 \right)\\
    &-2c_k(B_k+B_{k+1})\lambda
    -2(B_{k}-B_{k+1})m_0.
\end{align*}

Then
\begin{align*}
    \phi'(\lambda)&=B_k\left(b^2+\left(1-\frac{\lambda_k}{d}(1-b)(1-\lambda) a\right)\left(1-\frac{3\lambda_k}{d}(1-b)(1-\lambda) a\right)\right)\\
    &+B_{k+1}\left(b^2+\left(1-\frac{\lambda_{k+1}}{d}(1-b)(1-\lambda) a\right)\left(1-\frac{3\lambda_{k+1}}{d}(1-b)(1-\lambda) a\right)\right)-2c_k(B_k+B_{k+1}).
\end{align*}
By assumption $\frac{\lambda_k}{d}(1-b)(1-\lambda) a\le \frac{\lambda_{k+1}}{d}(1-b)(1-\lambda) a=\frac{\lambda_{k+1}}{d}(1-b)a_-\le \frac{1-b}{2}$, so by Corollary \ref{cn},
\begin{align*}
    \phi'(\lambda)\ge (B_k+B_{k+1})\left(b^2-2c_k+\frac{(b+1)(3b-1)}{4}\right)>(B_k+B_{k+1})\left(0.105-2c_k\right)>0.
\end{align*}
Since $\lambda\ge 1-\frac{d}{2\lambda_{k+1} a}$, we need to discuss the following two cases:

\noindent If $\frac{d}{2\lambda_{k+1} a}\ge \frac{1}{2}$, then the lower bound of $\lambda$ is $\frac{1}{2}$. Moreover, $\lambda_{k+1}\le \frac{d}{a}\le \frac{\lambda_{n+4}d}{d_0}=\frac{\lambda_{n+4}}{2}$, so $k\le \frac{n}{\sqrt{2}}+2$. Consequently, from \eqref{quotient} it's easy to check that
\begin{align*}
    \frac{B_k-B_{k+1}}{B_{k}+B_{k+1}}<0.008,
\end{align*}
Therefore by Lemma \ref{minFk} and Corollary \ref{cn}, we have
\begin{align*}
    \phi(\lambda) \ge \phi(\frac{1}{2})
    &=B_k\left(\frac{3}{2}b^2+\frac{d}{2\lambda_ka}(1-b)b-\frac{1}{2} \left(1-\frac{\lambda_k}{2d}(1-b)a\right)^2\right)\\
    &+B_{k+1}\left(\frac{3}{2}b^2+\frac{d}{2\lambda_{k+1}a}(1-b)b-\frac{1}{2} \left(1-\frac{\lambda_{k+1}}{2d}(1-b)a\right)^2\right)\\
    &-(B_k+B_{k+1})c_k-2m_0({B_{k}-B_{k+1}})\\
    &\ge (B_{k}+B_{k+1})(0.02746-c_k-0.016m_0)\\
    &>0.
\end{align*}

\noindent If $\frac{d}{2\lambda_{k+1} a}\le \frac{1}{2}$, then the lower bound of $\lambda$ is $1-\frac{d}{2\lambda_{k+1} a}$, so by \eqref{quotient max}, Lemma \ref{minFk} and Corollary \ref{cn}, we have
\begin{align*}
    \phi(\lambda)\ge\phi(1-\frac{d}{2\lambda_{k+1}}a)&=B_k\left((2-\frac{d}{2\lambda_{k+1}a})b^2+\frac{d}{2\lambda_k a}(1-b)b-\frac{d}{2\lambda_{k+1}a}(1-\frac{\lambda_k}{\lambda_{k+1}}\frac{1-b}{2})^2\right)\\
    &+B_{k+1}\left((2-\frac{d}{2\lambda_{k+1}a})b^2+\frac{d}{2\lambda_{k+1} a}(1-b)b-\frac{d}{2\lambda_{k+1}a}(1-\frac{1-b}{2})^2\right)\\
    &-(B_k+B_{k+1})c_k-2m_0(B_k-B_{k+1})\\
    &\ge (B_k+B_{k+1})\left(\frac{3}{2}b^2+\frac{(1-b)b}{2}-\frac{1}{2}(1-\frac{\lambda_k}{\lambda_{k+1}}\frac{1-b}{2})^2-c_k-2m_0\frac{B_k-B_{k+1}}{B_k+B_{k+1}}\right)\\
    &\ge (B_k+B_{k+1})(0.05-c_k-\frac{2}{3}m_0)\\
    &>0.
\end{align*}

If {$B_k<B_{k+1}$}, then
$\frac{n}{2}-2\le k \le\frac{n}{\sqrt{3}}$, so
\begin{align}\label{quotient bd2}
   \frac{B_{k+1}-B_{k}}{B_{k}+B_{k+1}}\le \frac{7 \alpha \left(n^2+16 n+36\right)+44}{(n+2) \left(21\alpha\left(n^2+8 n+14\right)+44\right)}\le 0.004.
\end{align}
In this case, we have
\begin{align*}
    R_{k,3}\le 2(B_{k+1}-B_{k})\lambda(1-\lambda)a^2.
\end{align*}
Then one can go through the same argument as before to prove that $f_{k,a}(1)\ge f_{k,a}(\lambda)$ for $\frac{1}{2}\le\lambda\le 1$. The details are omitted.\\

\noindent Case $3$: $\frac{\lambda_n}{4}=\frac{d\lambda_n}{2d_0}<\lambda_{k+1}\le \lambda_{n}$, and $a_->\frac{d}{2\lambda_{k+1}}$.
In this case $4(1-\lambda)\lambda_{k+1}>\lambda_n$, so $\frac{1}{2}\le \lambda<\frac{3}{4}$,  and $2\lambda_{k+1}>\lambda_n$. Hence $k\ge \frac{n-2}{\sqrt{2}}$ and $B_k\ge B_{k+1}$.
Now \eqref{R1} and \eqref{R3} becomes
\begin{align*}
    R_{k,1}&\le B_k\left((ba_++\frac{d(1-b)}{4\lambda_k})^2+(ba_-+\frac{d(1-b)}{4\lambda_k})^2 \right)+B_{k+1}\left((ba_++\frac{d(1-b)}{4\lambda_k})^2+(ba_-+\frac{d(1-b)}{4\lambda_k})^2 \right)\\
    &=B_k\left((2\lambda^2-2\lambda+1)b^2a^2+\frac{4ab(1-b)}{\lambda_k}+8(\frac{1-b}{\lambda_k})^2\right)\\
    &+B_{k+1}\left((2\lambda^2-2\lambda+1)b^2a^2+\frac{4ab(1-b)}{\lambda_{k+1}}+8(\frac{1-b}{\lambda_{k+1}})^2\right)
\end{align*}
and
\begin{align*}
    R_{k,3}\le 2(B_{k}-B_{k+1})m_0(1-\lambda)a^2
\end{align*}
respectively. With the help of \eqref{R2}, after some computations, we deduce that
\begin{align*}
\frac{f_{k,a}(1)-f_{k,a}(\lambda)}{a^2}
    &=B_{k}\left((2\lambda-2\lambda^2)b^2-4(\frac{1-b}{\lambda_ka})^2\right)+B_{k+1}\left((2\lambda-2\lambda^2)b^2-4(\frac{1-b}{\lambda_{k+1}a})^2\right)\\
    &-2(B_k+B_{k+1})c_k\lambda(1-\lambda)-2(B_{k}-B_{k+1})m_0(1-\lambda).
\end{align*}
It's easy to see that for fixed $k$, the above function is increasing in $\lambda$, so
\begin{align*}
    \frac{f_{k,a}(1)-f_{k,a}(\lambda)}{a^2}&\ge B_{k}\left(\frac{1}{2}b^2-4(\frac{1-b}{\lambda_ka})^2\right)+B_{k+1}\left(\frac{1}{2}b^2-4(\frac{1-b}{\lambda_{k+1}a})^2\right)
    -\frac{1}{2}(B_k+B_{k+1})c_k-(B_{k}-B_{k+1})m_0\\
    &\ge (B_k+B_{k+1})\left(\frac{b^2-c_k}{2}-4(\frac{1-b}{\lambda_ka})^2-m_0 \frac{B_k-B_{k+1}}{B_k+B_{k+1}}\right)\\
    &\ge (B_k+B_{k+1})\left( 0.04-\frac{1.7956}{\lambda_k^2a^2}-0.04 \frac{B_k-B_{k+1}}{B_k+B_{k+1}}\right)\\
    &\ge  (B_k+B_{k+1})\left( 0.04-\left(0.0071(\frac{\lambda_{n+4}}{\lambda_k})^2+0.04 \frac{B_k-B_{k+1}}{B_k+B_{k+1}}\right)\right).
\end{align*}

Direct computation shows that $0.0071\frac{\lambda_{n+4}}{\lambda_k}+0.04 \frac{B_k-B_{k+1}}{B_k+B_{k+1}}$ is decreasing in $k$ when $\frac{n-1}{\sqrt{2}}\le k\le n$, therefore
\begin{align*}
    \frac{f_{k,a}(1)-f_{k,a}(\lambda)}{a^2}\ge (B_k+B_{k+1})(0.04-0.035)>0.
\end{align*}

To sum up, by now we have proved Proposition \ref{lambda=1} when $n\ge 65$. When $n<65$, above arguments fail since $c_k$ (hence $R_{k,2}$) is no longer small enough. In this case, we keep  $R_{k,2}$ aside and consider only $R_{k,1}$ and $R_{k,3}$. Then the same argument as above shows that $R_{k,3}$ can be absorbed, which completes the proof. The details are omitted.

\end{proof}

\section{Proof for small $n$}\label{proofsmall}
In the proof of Corollary \ref{cn} and Theorem \ref{main}, we argue for $n$ sufficiently large. In this appendix, we give the numerical data to prove the corresponding cases when $n$ is small.

We first prove Corollary \ref{cn} for small $n$
\begin{proof}[Proof of Corollary \ref{cn} for $30\leq n\leq 428$]
We can use Matlab to calculate the values of $c_n$'s, which are listed as scatter diagrams as follows.
\begin{figure}[htbp]
\centering
\begin{minipage}[t]{0.48\textwidth}
\centering
\includegraphics[width=6cm]{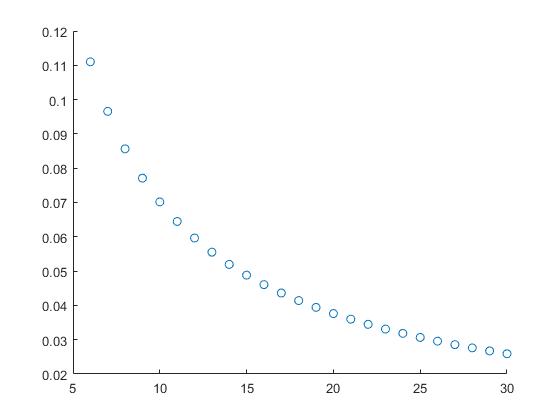}
\caption{$c_n$ for $6\leq n\leq 30$}
\label{figure1}
\end{minipage}
\begin{minipage}[t]{0.48\textwidth}
\centering
\includegraphics[width=6cm]{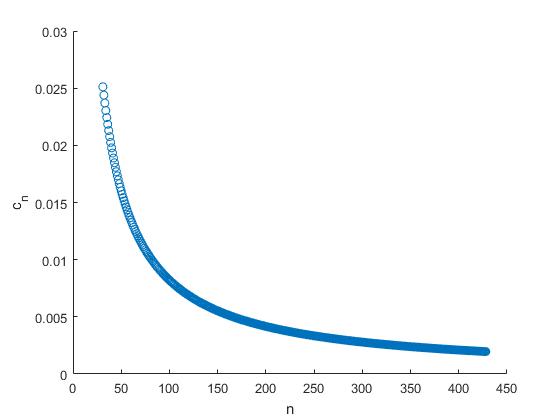}
\caption{$c_n$ for $30\leq n\leq 428$}
\label{figure2}
\end{minipage}
\end{figure}

\end{proof}

Then we give the proof of Theorem \ref{main} when $n$ is small.
\begin{proof}[Proof of Theorem \ref{main} for $n<10000$]
We follow the argument in Section \ref{proofmain}. We only prove for $n\geq 65$ (For the case when $5\leq n\leq 61$, we can use similar methods to run the induction procedure).

Applying Proposition \ref{lambda=1} and plugging it into \eqref{408}, we have

\begin{align}
0\leq&\frac{256}{35}(7-\frac{1}{\alpha})(\frac{27}{7\alpha}-15-\lambda_{n+1})\frac{1}{\alpha}(1-\frac{7}{6}a)+\frac{128}{7}(\lambda_{n+1}-6+\frac{71}{7\alpha})\frac{1}{\alpha^2}(1-\frac{7}{6}a)^2\nonumber\\
+&\frac{176}{63}\alpha(\lambda_2+4)(\lambda_2+6)b_{2}^{2}\nonumber\\
+&\frac{32}{9\alpha^2}\sum_{k=2}^{\frac{n-3}{2}}(\lambda_{n+1}-\lambda_{k}+\frac{11}{7\alpha})(2k+5)(1-\frac{1-b}{d}\lambda_{k}a)^2a^2\nonumber\\
+&\frac{32}{9\alpha^2}\sum_{k=\frac{n-1}{2}}^{n}(\lambda_{n+1}-\lambda_{k}+\frac{11}{7\alpha})(2k+5)(ba+(1-b)\frac{d}{4\lambda_{k}})^2.\nonumber\\
\leq &-\frac{512}{7}(\lambda_{n+1}+\frac{51}{7})(1-\frac{7}{6}a)+\frac{512}{7}(\lambda_{n+1}+\frac{100}{7})(1-\frac{7}{6}a)^2+\frac{22528}{63\alpha}a^2\nonumber\\
+&\frac{128}{9}\sum_{k=2}^{\frac{n-3}{2}}(\lambda_{n+1}-\lambda_{k}+\frac{22}{7})(2k+5)[(1-\frac{1-b}{d}\lambda_{k}a)^2+\frac{1}{2}c_k\chi_{\{5\leq n\leq 61\}}]a^2\nonumber\\
+&\frac{128}{9}\sum_{k=\frac{n-1}{2}}^{n}(\lambda_{n+1}-\lambda_{k}+\frac{22}{7})(2k+5)[(ba+(1-b)\frac{d}{4\lambda_{k}})^2+\frac{1}{2}c_k\chi_{\{5\leq n\leq 61\}}]a^2\nonumber\\
=&:\tilde{g}_n(a).
 \end{align}

To obtain a contradiction, it suffices to show that $\tilde{g}_{n}(a)$ is negative for $\frac{16}{\lambda_{n+4}}<a\leq \frac{16}{\lambda_n}$, for any $n<10000$ with $n\equiv 1\text{ (mod }4)$. Note that $\tilde{g}_n(a)$ is a parabola of $a$ with positive constant term. It suffices to show $\tilde{g}_{n}(\frac{16}{\lambda_{n+4}})$ and $\tilde{g}_{n}(\frac{16}{\lambda_{n}})$ are negative. Using Matlab, we obtain the following scatter diagrams for the above two quantities and thus we are done.

\begin{figure}[htbp]
\centering
\begin{minipage}[t]{0.48\textwidth}
\centering
\includegraphics[width=6cm]{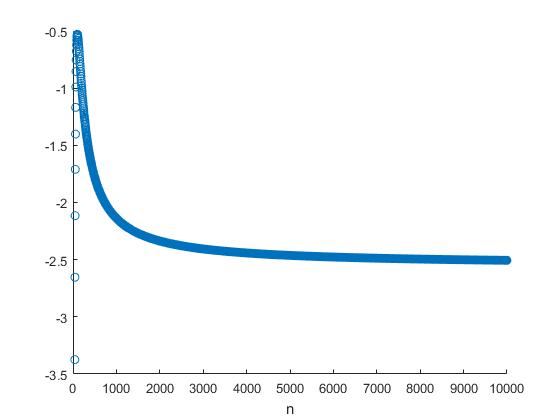}
\caption{$\tilde{g}_{n}(\frac{16}{\lambda_{n+4}})$ from $n=41$ to $9997$}
\label{figure1}
\end{minipage}
\begin{minipage}[t]{0.48\textwidth}
\centering
\includegraphics[width=6cm]{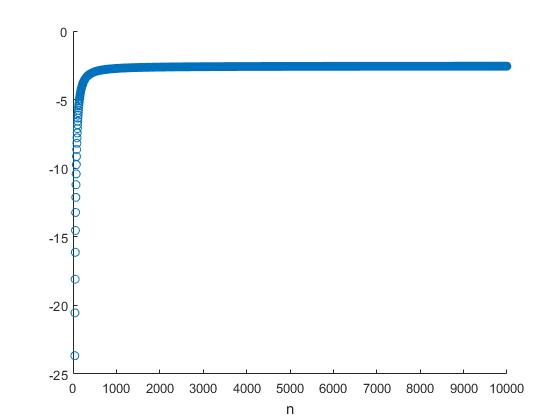}
\caption{$\tilde{g}_{n}(\frac{16}{\lambda_{n}})$ from $n=41$ to $9997$}
\label{figure2}
\end{minipage}
\end{figure}

\end{proof}

\medskip

\section*{Acknowledgements}

The research of J. Wei was partially supported by NSERC of Canada.  The research of  C.Gui was partially supported by NSF award  DMS-2155183 and a UMDF Professorial Fellowship of University of Macau.
	
\medskip

\end{document}